\documentclass{amsart}
\usepackage{graphicx}

\usepackage{amsmath, mathrsfs, amssymb, eucal, amsthm, amscd, amsrefs, amsxtra}

\usepackage{placeins}
\usepackage{microtype}
\usepackage{hyperref}

\hypersetup{
colorlinks=true,
linkcolor=blue,
citecolor=red,
filecolor=magenta,      
urlcolor=cyan,
pdftitle={Mordell_Curves_Arithmetic_Progression},
pdfpagemode=FullScreen,
}

\usepackage{amsrefs}

\usepackage{cleveref}

\numberwithin{equation}{section}
\newtheorem{theorem}{\bf Theorem}[section]
\newtheorem{lemma}[theorem]{\bf Lemma}
\newtheorem{proposition}[theorem]{\bf Proposition}

\theoremstyle{remark}

\begin{document}

\title[On certain $D(9)$ and $D(64)$ Diophantine triples]{On certain $D(9)$ and $D(64)$ Diophantine triples}

\author[B. Earp-Lynch]{Benjamin Earp-Lynch}
\address{Department of Mathematics, Brock University, Ontario, Canada L2S 3A1}
\email{be09wg@brocku.ca}

\author[S. Earp-Lynch]{Simon Earp-Lynch}
\address{Department of Mathematics, Brock University, Ontario, Canada L2S 3A1}
\email{se09ae@brocku.ca}

\author[O. Kihel]{Omar Kihel}
\address{Department of Mathematics, Brock University, Ontario, Canada L2S 3A1}
\email{okihel@brocku.ca}

\keywords{Diophantine triple, Pell's equation, Fibonacci numbers, Linear forms in logarithms}

\subjclass[2010]{11D09, 11D45, 11B37, 11J86}

\begin{abstract} A set of $m$ distinct positive integers $\{a_{1},\dots a_{m}\}$ is called a $D(q)$-$m$-tuple for nonzero integer $q$ if the product of any two increased by $q$, $a_{i}a_{j}+q$, $i\neq j$ is a perfect square. Due to certain properties of the sequence, there are many $D(q)$-Diophantine triples related to the Fibonacci numbers. A result of Ba\'{c}i\'{c} and Filipin characterizes the solutions of Pellian equations that correspond to $D(4)$-Diophantine triples of a certain form. We generalize this result in order to characterize the solutions of Pellian equations that correspond to $D(l^2)$-Diophantine triples satisfying particular divisibility conditions. %
Subsequently, we employ this result and bounds on linear forms in logarithms of algebraic numbers in order to classify all $D(9)$ and $D(64)$-Diophantine triples of the form $\{F_{2n+8},9F_{2n+4},F_{k}\}$ and $\{F_{2n+12},16F_{2n+6},F_{k}\}$, where $F_{i}$ denotes the $i$th Fibonacci number.\end{abstract}

\maketitle

\section{Introduction} \label{introduction}

A set of $m$ (integer) elements $\{a_1,\dots,a_{m}\}$ is called an (integer) Diophantine $m$-tuple with property $D(l)$, or alternatively a $D(l)$-$m$-tuple, provided the product of any two different elements increased by $l$, $a_{i}a_{j}+l$ where $i\neq j$, is an integer square. Similar sets have been studied for centuries. Fermat showed that the set $\{1,3,8\}$ can be extended from a $D(1)$-triple to a $D(1)$-quadruple with the additional element $120$. Baker and Davenport \cite{B-D} showed that $120$ is in fact the only integer that extends this set from a $D(1)$-triple to a $D(1)$-quadruple. This was also the first appearance of the method that came to be known as Baker-Davenport Reduction. He, Togb\'{e} and Ziegler \cite{Z-al} proved that there does not in fact exist a $D(1)$-quintuple. $D(l)$-$m$-$tuples$ where $l=-1$ and $l=4$ have also been studied (see \cite{Kihel}, \cite{B-F}, \cite{F-al} and \cite{He-al} and \cite{T-al}) as have further cases, though all less extensively than the case when $l=1$.

The $D(1)$-triple $\{1,3,8\}$ is an example of a $D(1)$-triple of Fibonacci numbers. Let $F_{n}$ denote the $n$th Fibonacci number, and let $F_{1}=F_{2}=1$. Using the recurrence relation, $F_{n+1}=F_{n}+F_{n-1}$, the sequence of Fibonacci numbers is seen to be as follows, $$1,1,2,3,5,8,13,21,34,55,\dots.$$ The Fibonacci numbers obey what is called Binet's formula, $$F_{n}=\frac{\alpha^{n}-\overline{\alpha}^{n}}{\sqrt{5}}$$ where here $\alpha=\frac{1+\sqrt{5}}{2}$ and $\overline{\alpha}$ is its conjugate, $\overline{\alpha}=\frac{1-\sqrt{5}}{2}$. Note that the $D(1)$-triple $\{1,3,8\}$ consists of the first $3$ even-indexed Fibonacci numbers. Using Catalan's identity,
\[
F_{n}^2-F_{n-r}F_{n+r}=(-1)^{n-r}F_{r}^{2},
\]
it can be determined that in fact any set of three consecutive even-indexed Fibonacci numbers, $\{F_{2n},F_{2n+2},F_{2n+4}\}$ will be a $D(1)$-triple. Generalizing this form of $D(1)$-triple to $D(l)$-triples with $l$ an integer not necessarily equal to $1$ requires the sequence of Lucas numbers, whose $n$th element is denoted $L_{n}$. Define $L_{1}=1$, $L_{2}=3$ and $L_{n+1}=L_{n}+L_{n-1}$. Using Catalan's identity and the factorization of even-indexed Fibonacci numbers, $F_{2n}=L_{n}F_{n}$, we can form the family of $D(F_{2r}^{2})$-triples, $$\{F_{2n},L_{r}F_{2n+2r},F_{2n+4r}\}.$$ The $r=1,2,3$ cases of corresponding quadruples are seen in \cite{Duj}. In \cite{T-al}, it was shown that if $\{F_{2n+6},4F_{2n+4},F_{k}\}$ is a $D(4)$-triple, then $k=2n$ unless $n=1$ in which case $k=1$ or $k=2$, due to the fact that $F_{1}=F_{2}=1$.

In this paper, we prove the following two theorems.
\begin{theorem}\label{first theorem}
If $\{F_{2n+8},9F_{2n+4},F_{k}\}$ is a $D(9)$-triple, then if $n>1$, we must have $k=2n$. If $n=1$, we have $F_{1}=1=F_{2}$, so $k=1$ and $k=2$ are both solutions.
\end{theorem}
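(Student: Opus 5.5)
We follow the approach of the $D(4)$ case in \cite{T-al}, built on the Ba\'{c}i\'{c}--Filipin method \cite{B-F}. Put $a=F_{2n+8}$, $b=9F_{2n+4}$. Catalan's identity gives $ab+9=(3F_{2n+6})^2$, say $r=3F_{2n+6}$. Indeed $F_{2n+6}^2-F_{2n+4}F_{2n+8}=1$, so $9F_{2n+4}F_{2n+8}+9=9F_{2n+6}^2$. If $c=F_k$ extends $\{a,b\}$ to a $D(9)$-triple, then $ac+9=s^2$ and $bc+9=t^2$. Eliminating $c$ gives the Pellian system
\[
at^2-bs^2=9(a-b).
\]
The obvious extension is $c=F_{2n}$. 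Catalan with $r=4$ gives $F_{2n+4}^2-F_{2n}F_{2n+8}=F_4^2=9$, and with $r=2$ gives $9F_{2n}F_{2n+4}+9=9F_{2n+2}^2$. So $\{a,b,F_{2n}\}$ is the standard $d_-$ extension, $d_-=a+b+c\mp 2r$-type. Likewise $d_+=a+b+2r$ corresponds to the regular extension.

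\textbf{Step 1 (describing all $c$).} Here I would use the paper's generalization of the Ba\'{c}i\'{c}--Filipin result to $D(l^2)$-triples with $l=3$. The divisibility hypotheses should be checkable from the Fibonacci identities, e.g.\ via $\gcd(F_{2n+8},F_{2n+4})\mid F_4=3$ and the residues mod $3$. The result should show that every solution of the system comes from the fundamental solutions $(t_0,s_0)=(\pm 3,3)$. Hence $c$ lies in one of two binary recurrent sequences,
\[
c=c_\nu^{\pm}=\frac{9}{4ab}\Big[(\sqrt b\pm\sqrt a)^2\Big(\tfrac{r+\sqrt{ab}}{3}\Big)^{2\nu}+(\sqrt b\mp\sqrt a)^2\Big(\tfrac{r-\sqrt{ab}}{3}\Big)^{2\nu}-2(a+b)\Big],
\]
where $(r+\sqrt{ab})/3$ is the relevant unit. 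Small $\nu$ give $c_0=0$, $c_1^-=F_{2n}$, $c_1^+=d_+$. We still need to check whether $d_+$ is a Fibonacci number. It is $F_{2n+8}+9F_{2n+4}+6F_{2n+6}$, which lies strictly between consecutive Fibonacci numbers for $n\ge1$ by comparing Binet expansions. So $d_+$ is excluded, and only $\nu\ge2$ remains.

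\textbf{Step 2 (linear form).} Suppose $F_k=c_\nu^{\pm}$ with $\nu\ge2$. Binet's formula gives
\[
\Lambda=k\log\alpha-2\nu\log\tfrac{r+\sqrt{ab}}{3}+\log\frac{9(\sqrt b\pm\sqrt a)^2\sqrt5}{4ab}.
\]
This satisfies $|\Lambda|\ll \alpha^{-k}$, since the other terms are exponentially small relative to $c$. Matveev's theorem, or Laurent's bound for three logarithms, bounds $\log|\Lambda|$ from below by $-C\log k\cdot h$, with heights $O(n)$. Also $k\approx 4\nu(n+6)$, so $k$ and $n\nu$ are comparable. This yields an absolute bound, roughly $n,\nu<10^{30}$.

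\textbf{Step 3 (main obstacle).} Reducing this bound is the hardest step, because $a$ and $b$ depend on $n$, so Baker--Davenport cannot be applied directly with fixed logarithms. I would first re-express everything in powers of $\alpha$. Since $r+\sqrt{ab}$ is close to $3\alpha^{2n+6}\cdot$const, the unit $(r+\sqrt{ab})/3$ equals $\alpha^{2n+6}(1+O(\alpha^{-4n}))$. The main linear form then collapses to $(k-4\nu(n+6))\log\alpha+\log\kappa^{\pm}$, with fixed algebraic $\kappa^{\pm}$, plus an error of size $\nu\alpha^{-4n}$. For $n$ beyond a modest threshold this forces $k-4\nu(n+6)$ to equal a fixed integer, and a direct size comparison then gives a contradiction. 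The remaining small $n$ give fixed triples with varying $\nu$. For each of these I would apply Baker--Davenport (Dujella--Peth\H{o}) reduction, or simply check $c_\nu$ against Fibonacci numbers by computer up to the reduced bound. The case $n=1$ gives $F_{2n}=F_2=F_1$, which accounts for the solutions $k=1,2$.
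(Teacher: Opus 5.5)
\textbf{The gap is in Steps 2 and 3: nothing in your argument bounds $n$.} Your Step 1 agrees with the paper. Here $(r+\sqrt{ab})/3=\beta_n$, and hypothesis (iii) of Lemma~\ref{Filipin} holds trivially because $9\mid 9F_{2n+4}$. You still need to check the size condition $F_{2n+8}<9F_{2n+4}<\frac{40}{9}F_{2n+8}$.

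\emph{Step 2.} Matveev's bound applied to your three-logarithm form cannot give an absolute bound. The heights of $\beta_n$ and of $\sqrt5\gamma_n^{\pm}$ are both of order $n$, so the lower bound has the shape $\log|\Lambda|>-C\,n\log\beta_n\log(\nu n)$. The upper bound is only $\log|\Lambda|<-2\nu\log\beta_n+O(1)$. Together these give $\nu\ll n\log(\nu n)$, which is the paper's Proposition~\ref{4.3}, and say nothing about $n$. Observing $k\asymp\nu n$ does not help, because $n$ appears on both sides.

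\emph{Step 3.} Your collapse uses a wrong asymptotic. In fact
\[
\beta_n=F_{2n+6}+\sqrt{F_{2n+6}^2-1}=\tfrac{2}{\sqrt5}\,\alpha^{2n+6}\bigl(1+O(\alpha^{-4n})\bigr),
\]
so $\beta_n^{2\nu}$ carries the factor $(4/5)^{\nu}$. This factor cannot be absorbed into a fixed $\kappa^{\pm}$. In the $-$ class, subtract the form belonging to the known solution $j=1$, $k=2n$. What remains is
\[
\Lambda_1=K\log\alpha-(\nu-1)\log(5/4),\qquad K=(2\nu-1)(2n+6)-k-6,
\]
with $|\Lambda_1|\ll\nu\alpha^{-4n}$. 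Here $K\approx0.46(\nu-1)$ grows with $\nu$ (also, $k$ is about $2\nu(2n+6)-2n-4$, not $4\nu(n+6)$). $\Lambda_1$ is genuinely tiny whenever $K/(\nu-1)$ is a good approximation to $\log(5/4)/\log\alpha$. So no integer is forced to be fixed, and no size comparison gives a contradiction.

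\textbf{Missing ingredient.} You need a lower bound for this linear form in \emph{two} logarithms; the paper uses Laurent's Lemma~\ref{Laurent}, which gives $n\ll(\log\nu)^2$. Only combining that with $\nu\ll n\log(\nu n)$ gives absolute bounds ($j<3.55\cdot10^{19}$, $n<246806$). Legendre's theorem applied to the continued fraction of $\log(5/4)/\log\alpha$ then sharpens this to $n<1.04\log j+3.76$, hence $n<42$ and $j<4.63\cdot 10^{15}$. Only then is Baker--Davenport reduction run for each of these finitely many $n$.

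\textbf{Caution for the $+$ class.} When you repair the argument, note that in the $+$ class nothing cancels exactly. One has
\[
\sqrt5\gamma_n^{+}=\tfrac54(7-3\alpha)^2\alpha^{-2n-4}\bigl(1+O(\alpha^{-4n})\bigr),
\]
where $7-3\alpha$ has norm $19$. The reduced form there keeps a fixed extra term $2\log(7-3\alpha)$. It should be treated as a three-logarithm form in $\alpha$, $5/4$ and $7-3\alpha$ with bounded heights, or by an inhomogeneous reduction.

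Comparing with $j=1$, $k=2n$ (the paper's $\Lambda_0$) only works in the $-$ class. For example, at $n=2$ one finds $\Lambda_0^{+}\approx5.38$, while $9473\beta_2^{-2}\approx0.78$. So Lemma~\ref{5.1}, as stated for both signs, should not be relied on for the $+$ sign.
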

\begin{theorem}\label{second theorem}
 If $\{16F_{2n+6},F_{2n+12},F_{k}\}$ is a $D(64)$-triple and $3\mid n$, then we must have $k=2n$.
\end{theorem}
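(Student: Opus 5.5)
The plan is to reduce the problem to a system of Pellian equations, use the paper's generalization of the Ba\'{c}i\'{c}--Filipin characterization to pin down the fundamental solutions, write the candidates for $c=F_k$ as terms of explicit binary recurrences, and compare these with Fibonacci numbers via Binet's formula and linear forms in logarithms. Write $a<b$ for the two given elements $16F_{2n+6}$ and $F_{2n+12}$, and $c=F_k$. I have not verified two points: that the pair $\{a,b\}$ as printed is itself a $D(64)$-pair, and exactly how the hypothesis $3\mid n$ feeds into the divisibility conditions of our generalization.

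\emph{Step 1: the Pellian system.} Suppose $\{a,b,c\}$ is a $D(64)$-triple. Write $ac+64=x^2$ and $bc+64=y^2$, and eliminate $c$ to get
\[
a y^2-b x^2=64(a-b).
\]
Setting $ab+64=r^2$, every solution has the form
\[
y\sqrt a+x\sqrt b=(y_0\sqrt a+x_0\sqrt b)\,(r+\sqrt{ab})^{m}/8^{m}.
\]
The exact normalization of the unit depends on parities; when $8\mid r$ and $8^2\mid ab$ we can take the unit to be $(r+\sqrt{ab})/8$.

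\emph{Step 2: the fundamental solutions and the role of $3\mid n$.} The condition $3\mid n$ is what makes $F_{2n+6}$ even. Together with the factor $16$, this should give exactly the divisibility of $a$, $b$, $r$ by powers of $2$ needed to invoke our generalization of the Ba\'{c}i\'{c}--Filipin result with $l=8$. That result restricts the fundamental solutions $(x_0,y_0)$ to the trivial ones $(\pm 8,8)$, plus possibly one small explicit family.

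\emph{Step 3: $c$ as a recurrence.} From Step 2, $c$ equals a term $c_m^{\pm}$ of explicit second-order recurrences with characteristic roots $\beta^{\pm1}$, where $\beta=(r+\sqrt{ab})/8$ or its appropriate normalization. Using Catalan's identity, we identify $r$ as a Lucas/Fibonacci expression, so $\beta$ is a power $\alpha^{t}$ with $t$ linear in $n$ (about $4n+18$), or differs from such a power by a bounded factor. The index $m=0$ together with the minus sign should give $c=F_{2n}$, which is the expected solution $k=2n$. The remaining task is to show that no other $c_m^{\pm}$ is a Fibonacci number.

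\emph{Step 4: the Fibonacci equation $F_k=c_m^{\pm}$.} Applying Binet's formula to both sides yields a small linear form
\[
\Lambda=k\log\alpha-mt\log\alpha+\log\gamma,
\]
where $\gamma$ is an algebraic number of bounded degree built from $\sqrt5$, $\sqrt a$, $\sqrt b$ and $y_0,x_0$. Here $|\Lambda|\ll \alpha^{-2k}$ or a comparable bound. Matveev's theorem then bounds $k$ polynomially in $\log$ of the heights, hence in $n$. The main obstacle is making this uniform in $n$, since $\gamma$ depends on $n$. I would first try to show that $\gamma$ tends to an explicit constant in $\mathbb{Q}(\sqrt5)$ up to an error $O(\alpha^{-2n})$. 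In that case, either $\Lambda$ collapses to a form in $\log\alpha$ and $\log\sqrt5$ whose coefficients force an identity, giving a direct contradiction for large $n$ and $m\ge1$, or it gives a bound of the shape $n\ll \log^2 n$. For the finitely many small $n$ that remain, and for small $m$, we finish with Baker--Davenport reduction and a direct computation. This confirms that $k=2n$ is the only possibility.
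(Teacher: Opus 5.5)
\textbf{There is a genuine gap in Steps 3--4: the linear form is mis-specified, and the absolute bound on $n$ is never established.}

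\emph{The linear form.} Your outline follows the paper's architecture (Lemma~\ref{Filipin} with $l=8$, then $c=\pm32U_jV_j+4U_j^2(a+b)$, Binet, linear forms, Baker--Davenport), but Step~4 fails as written. By Catalan, $r=4F_{2n+9}$, so $\{a,b\}$ is indeed a $D(64)$-pair. Your unit is then $\beta=(r+\sqrt{ab})/8=\bigl(F_{2n+9}+\sqrt{F_{2n+6}F_{2n+12}}\bigr)/2$, and it is \emph{not} a power of $\alpha$. By Lemma~\ref{second nosquare}, $F_{2n+6}F_{2n+12}$ is neither a square nor $5$ times a square, so $\beta$ lies in a real quadratic field different from $\mathbb{Q}(\sqrt5)$. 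The ``bounded factor'' cannot be absorbed into $\gamma$ either. Since $\beta=5^{-1/2}\alpha^{2n+9}(1+O(\alpha^{-4n}))$, the term $2j\log\beta$ (exponent $j$ = your $m$) contributes $-j\log5+O(j\alpha^{-4n})$. That grows with $j$, so your $(k-mt)\log\alpha+\log\gamma$ is not small for any $j$-independent $\gamma$. The correct object is the three-logarithm form $\Lambda=2j\log\beta-k\log\alpha+\log(\sqrt5\gamma^{\pm})$, with $0<\Lambda\ll\beta^{-2j}$. Lemma~\ref{Matveev} applies to it only after proving that $\beta,\alpha,\sqrt5\gamma^{\pm}$ are multiplicatively independent, which is what Lemma~\ref{second nosquare} is for. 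Because $\log\beta$ and $h(\sqrt5\gamma^{\pm})$ grow linearly in $n$, this only gives $j\ll n\log(jn)$.

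\emph{The bound on $n$.} This is the real content, and you leave it as a hope with inaccurate claims. $\gamma$ does not tend to a constant; what holds is $\sqrt5\gamma^{\pm}=5(4\alpha^{-3}\pm1)^2\alpha^{-2n-6}(1+O(\alpha^{-4n}))$. Nothing ``forces an identity'' either. For the minus sign, $4\alpha^{-3}-1=-\alpha^{-6}$, and substitution gives $K\log\alpha-(j-1)\log5=O(j\alpha^{-4n})$ with $K=(2j-1)(2n+9)-k-9$, the analogue of the paper's $\Lambda_1$. This is nonzero for $j\ge2$ but can be small. You need Lemma~\ref{Laurent} to get $n\ll\log^2 j$, and only the \emph{combination} with $j\ll n\log(jn)$ bounds both variables. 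The paper reaches the two-logarithm form by subtracting $\Lambda_0$, the form attached to the known solution $(j,k)=(1,2n)$, which cancels $\gamma$ exactly. It then sharpens to $n\ll\log j$ via continued fractions and Legendre's theorem, and finishes with Baker--Davenport reduction and a finite check.

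\emph{A point in your favour.} Your approximation idea is what handles the plus sign. The paper's bound on $\Lambda_0$ uses $C_1^{(-)}=F_{2n}$, and $C_1^{(+)}$ is not a Fibonacci number. So a $\Lambda_0$ built with $\gamma^{(+)}$ is not small: it tends to about $7.1$. Your route instead gives a three-logarithm form in the fixed numbers $\alpha$, $5$ and $4\alpha^{-3}+1=4\sqrt5-7$. This form does not vanish because $4\sqrt5-7$ has norm $31$, so Lemma~\ref{Matveev} yields $n\ll\log j$.

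\emph{Smaller corrections to Step 2.}
\begin{itemize}
\item ``$F_{2n+6}$ even'' only gives $32\mid a$. What you need is $3\mid n\Rightarrow 6\mid 2n+6\Rightarrow 8=F_6\mid F_{2n+6}$, hence $64\mid a$.
\item You must also check $a<b<(4+\tfrac1{16})a$; this holds since $b/a\approx\alpha^6/16\approx1.12$.
\item Lemma~\ref{Filipin} then gives exactly the classes $(\pm8,8)$, with no extra family.
\item $F_{2n}=a+b-2r$ is the minus-sign term at exponent $1$, not $0$; exponent $0$ gives $c=0$.
\end{itemize}
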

The proofs of these theorems rely on a lemma on Pellian equations, which will be proven in Section \ref{PellLemma}, and on bounds for linear forms in logarithms of algebraic numbers.

\section{Preliminaries}\label{Preliminaries}

Here we state several results that will be required in order to prove  \Cref{first theorem,second theorem}.

The following result is a variation of the original lemma of Baker and Davenport \cite{B-D} due to Dujella and Peth\H{o} \cite{D-P}. 
\begin{lemma}[Variation on a Lemma of Baker and Davenport]
Assume that $\kappa$ and $\mu$ are real numbers and $M$ is a positive integer. Let $P/Q$ be the convergent of the continued fraction expansion of $\kappa$ such that $Q>6M$ and let
\[
\eta =||\mu Q||-M\cdot ||\kappa Q||,
\]
where $||\cdot||$ denotes the distance from the nearest integer.  If $\eta>0$, then there is no solution of the inequality
\[
0<j\kappa-k+\mu<AB^{-j}
\]
in integers $j$ and $k$ with
\[
\frac{\log{(AQ/\eta)}}{\log{B}}\leq j\leq M.
\]
\end{lemma}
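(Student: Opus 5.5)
The statement is the Dujella--Peth\H{o} variant of the Baker--Davenport lemma, and the plan is to argue by contradiction. Suppose integers $j,k$ satisfy
\[
0<j\kappa-k+\mu<AB^{-j}\quad\text{and}\quad \frac{\log(AQ/\eta)}{\log B}\le j\le M,
\]
with $A>0$, $B>1$ implicitly. Multiplying the inequality by $Q>0$ gives
\[
0<jQ\kappa-kQ+\mu Q<AQB^{-j}.
\]
The aim is a lower bound of $\eta$ for the middle quantity. Combined with the upper bound this forces $B^{j}<AQ/\eta$, that is, $j<\log(AQ/\eta)/\log B$, contradicting the assumed range of $j$.

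To get the lower bound, write $Q\kappa=P+\varepsilon$ with $|\varepsilon|=\|Q\kappa\|$. For a convergent this holds with $P$ the nearest integer to $Q\kappa$; in general $|Q\kappa-P|\ge\|Q\kappa\|$ and $|Q\kappa-P|<1/Q$. It is cleanest to take $\varepsilon=Q\kappa-P$, since we only need to bound $|j\varepsilon|$ by something $\le M\|Q\kappa\|$. Then
\[
jQ\kappa-kQ+\mu Q=(jP-kQ)+\mu Q+j\varepsilon,
\]
and $jP-kQ$ is an integer. By the triangle inequality for the distance to the nearest integer,
\[
\bigl|jQ\kappa-kQ+\mu Q\bigr|\ \ge\ \|\mu Q+j\varepsilon\|\ \ge\ \|\mu Q\|-|j|\,|\varepsilon|.
\]
Since the middle quantity is positive, this bounds the quantity itself from below.

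The one delicate point is the error term $|j|\,|\varepsilon|$. We need $|\varepsilon|=\|\kappa Q\|$, which requires that $P$ is the integer nearest to $Q\kappa$. This holds because $|Q\kappa-P|<1/Q<1/(6M)\le 1/2$, a consequence of $Q>6M$ and the convergent property $|\kappa-P/Q|<1/Q^2$. We also use $1\le j\le M$. Note $j\ge 1$ whenever $AQ/\eta>B$; if $j$ could be $0$ or negative the statement needs separate comment, but as in its applications we take $j$ to range over positive integers with $j\le M$. Hence $|j\varepsilon|\le M\|\kappa Q\|$, and the lower bound becomes
\[
jQ\kappa-kQ+\mu Q\ \ge\ \|\mu Q\|-M\|\kappa Q\|=\eta>0.
\]
Therefore $\eta<AQB^{-j}$, which gives $j<\log(AQ/\eta)/\log B$, the desired contradiction.

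I expect no real obstacle beyond this bookkeeping. The role of $Q>6M$ is only to guarantee that the nearest-integer approximation is $P$ itself and that $\eta$ has a chance of being positive, since $M\|\kappa Q\|<M/Q<1/6$.
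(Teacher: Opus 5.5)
Your proof is correct and is the standard Dujella--Peth\H{o} argument; the paper itself quotes this lemma from Dujella and Peth\H{o} without giving a proof. As there, you multiply by $Q$, split off $j(Q\kappa-P)$ with $|Q\kappa-P|=\|Q\kappa\|$ (since $|Q\kappa-P|<1/Q<1/2$), and use $\|x+y\|\ge\|x\|-|y|$ with $|j|\le M$ to get $\eta<AQB^{-j}$. Your caveat about the sign of $j$ is the right one to flag: the argument only needs $|j|\le M$, and under the implicit hypotheses $A>0$, $B>1$ every application in the paper has $AQ/\eta>1$, which already forces $j\ge 1$.
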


Recall that if $[a_{0};a_1,a_2,\dots]$ is the simple regular continued fraction representation of some real number, the integers $a_i$ are called its partial quotients and the rational approximation $[a_{0};a_{1},a_{2},\dots,a_{n}]$ is called its $n$th convergent. The following result (see the proof of Theorem 22 in \cite{Kin}) gives bounds for how well a convergent approximates an irrational number in terms of the subsequent partial quotient in its continued fraction representation. It will be listed as a lemma here so that it can be easily called upon later in the paper.
\begin{lemma}\label{ParQuo}
Let $\theta$ be an irrational number, $\dfrac{p_{r}}{q_{r}}$ its $r$th convergent and $a_{r+1}$ its $r+1$st partial quotient. The following inequality holds:
\[
\frac{1}{q_{r}^{2}(a_{r+1}+2)}<\Big|\theta-\frac{p_{r}}{q_{r}}\Big|\leq\frac{1}{q_{r}^{2}a_{r+1}}.
\]
\end{lemma}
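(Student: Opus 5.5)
We prove the two-sided estimate of \Cref{ParQuo} directly from the recursive structure of convergents, following the standard treatment in \cite{Kin}. Write $\theta=[a_0;a_1,a_2,\dots]$ and let $\theta_{r+1}=[a_{r+1};a_{r+2},\dots]$ be the $(r+1)$st complete quotient. Since $\theta$ is irrational, the expansion is infinite and $\theta_{r+1}>1$. The convergent numerators and denominators satisfy $p_{r+1}=a_{r+1}p_r+p_{r-1}$ and $q_{r+1}=a_{r+1}q_r+q_{r-1}$ (with $q_{-1}=0$, $q_0=1$). Replacing $a_{r+1}$ by $\theta_{r+1}$ in these recursions gives the exact formula
\[
\theta=\frac{\theta_{r+1}p_r+p_{r-1}}{\theta_{r+1}q_r+q_{r-1}}.
\]

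First, we derive an exact expression for the error. Subtract $p_r/q_r$ and use the determinant identity $p_{r-1}q_r-p_rq_{r-1}=(-1)^r$. This gives
\[
\Big|\theta-\frac{p_r}{q_r}\Big|=\frac{1}{q_r(\theta_{r+1}q_r+q_{r-1})}.
\]
So the whole problem reduces to bounding $\theta_{r+1}q_r+q_{r-1}$ above and below in terms of $q_r$ and $a_{r+1}$.

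For the upper bound on the error, note that $\theta_{r+1}\ge a_{r+1}$ because $a_{r+1}=\lfloor\theta_{r+1}\rfloor$, and that $q_{r-1}\ge 0$. Hence $\theta_{r+1}q_r+q_{r-1}\ge a_{r+1}q_r$, which yields $|\theta-p_r/q_r|\le 1/(q_r^2a_{r+1})$.

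For the lower bound on the error, use that $\theta_{r+1}<a_{r+1}+1$ and that $q_{r-1}\le q_r$ (the denominators are nondecreasing). Then $\theta_{r+1}q_r+q_{r-1}<(a_{r+1}+1)q_r+q_r=(a_{r+1}+2)q_r$. This gives the strict inequality $|\theta-p_r/q_r|>1/(q_r^2(a_{r+1}+2))$.

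The only step needing care is the edge case of the lower bound, namely checking $q_{r-1}\le q_r$ for every index. For $r=0$ we have $q_{-1}=0<q_0=1$. For $r=1$ we have $q_0=1\le a_1=q_1$, since $a_1\ge1$. For $r\ge 2$ the inequality follows from $q_r=a_rq_{r-1}+q_{r-2}\ge q_{r-1}$. The strictness of the lower bound comes from $\theta_{r+1}<a_{r+1}+1$, which is strict because $\theta$ is irrational.
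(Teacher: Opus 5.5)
Your proof is correct, and it is essentially the same argument the paper relies on. The paper gives no proof of its own; it defers to the proof of Theorem 22 in \cite{Kin}, which is this standard reasoning. There, the exact error formula $|\theta-p_r/q_r|=1/\bigl(q_r(\theta_{r+1}q_r+q_{r-1})\bigr)$, together with $a_{r+1}\le\theta_{r+1}<a_{r+1}+1$ and $0\le q_{r-1}\le q_r$, gives $1/\bigl(q_r(q_r+q_{r+1})\bigr)<|\theta-p_r/q_r|\le 1/(q_rq_{r+1})$, and substituting $q_{r+1}=a_{r+1}q_r+q_{r-1}$ yields the stated bounds, exactly as you do in one step.
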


The following well-known theorem due to Legendre establishes a condition which can determine whether a given rational approximation of an irrational number is a convergent of the irrational's continued fraction (see \cite{Bu-F}). We list it so that it can be referred to later in the paper.

\begin{theorem}[Legendre]\label{Legendre}
Let $\theta$ be an irrational number. Let $p,q$ be integers such that $q\geq 1$ and
\[
\Big|\theta-\frac{p}{q}\Big|<\frac{1}{2q^{2}}.
\]
Then $\frac{p}{q}$ is a convergent of $\theta$.
\end{theorem}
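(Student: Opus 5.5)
The statement to be proved is Legendre's theorem: if $|\theta-p/q|<1/(2q^2)$ then $p/q$ is a convergent of $\theta$. I would argue through the best-approximation property of convergents, using only the standard facts about the convergents $p_r/q_r$ of $\theta$: the denominators $q_r$ strictly increase to infinity (for $r\ge1$), $p_{r+1}q_r-p_rq_{r+1}=\pm1$, and $|q_r\theta-p_r|<1/q_{r+1}$.

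The plan has three steps.

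\emph{Reduction.} First I reduce to the case $\gcd(p,q)=1$. If $p=dp'$ and $q=dq'$ with $d>1$, then $|\theta-p'/q'|=|\theta-p/q|<1/(2q^2)\le 1/(2q'^2)$. Since $p/q=p'/q'$ as a rational number, it suffices to treat coprime $p,q$.

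\emph{Choice of index.} I choose $r$ with $q_r\le q<q_{r+1}$; this is possible because the $q_r$ grow to infinity and $q_0=1\le q$. Suppose, for contradiction, that $p/q\ne p_r/q_r$. The key claim is the best-approximation inequality $|q_r\theta-p_r|\le|q\theta-p|$ whenever $1\le q<q_{r+1}$.

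To prove the claim, solve
\[
q=xq_r+yq_{r+1},\qquad p=xp_r+yp_{r+1}.
\]
The system has determinant $\pm1$, so $x,y$ are integers. Now:
\begin{itemize}
\item $x\neq0$, since otherwise $q=yq_{r+1}\ge q_{r+1}$.
\item If $y=0$, then $p/q=p_r/q_r$, which we excluded; so $y\neq0$.
\item Since $0<q<q_{r+1}$, the integers $x$ and $y$ have opposite signs.
\item The numbers $q_r\theta-p_r$ and $q_{r+1}\theta-p_{r+1}$ also have opposite signs, because consecutive convergents lie on alternate sides of $\theta$.
\end{itemize}
Hence the two terms in
\[
q\theta-p=x(q_r\theta-p_r)+y(q_{r+1}\theta-p_{r+1})
\]
have the same sign, and $|q\theta-p|\ge|x|\,|q_r\theta-p_r|\ge|q_r\theta-p_r|$. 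Establishing this sign bookkeeping cleanly is the main obstacle; everything else is bookkeeping.

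\emph{Contradiction.} Write $\delta=|q\theta-p|<1/(2q)$. By the claim,
\[
\Big|\frac pq-\frac{p_r}{q_r}\Big|\le \Big|\theta-\frac pq\Big|+\Big|\theta-\frac{p_r}{q_r}\Big|\le\frac{\delta}{q}+\frac{\delta}{q_r}<\frac1{2q^2}+\frac1{2qq_r}.
\]
On the other hand, $p/q\ne p_r/q_r$ forces $|pq_r-p_rq|\ge1$, so the left side is at least $1/(qq_r)$. Therefore $1/(qq_r)<1/(2q^2)+1/(2qq_r)$, which simplifies to $1/(2qq_r)<1/(2q^2)$, i.e.\ $q<q_r$. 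This contradicts the choice $q_r\le q$. Hence $p/q=p_r/q_r$ is a convergent of $\theta$.
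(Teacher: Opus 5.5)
The paper gives no proof of this statement. It quotes Legendre's theorem as a classical result with a reference to the literature, so there is no argument of the authors' to compare yours against.

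Your proof is correct and complete. It follows the standard route through Lagrange's best-approximation property of convergents, followed by the triangle-inequality comparison with $|pq_r-p_rq|\ge 1$. The sign bookkeeping you flag as the main obstacle is handled correctly:
\begin{itemize}
\item $x$ and $y$ are both nonzero and of opposite signs because $0<q<q_{r+1}$;
\item $q_r\theta-p_r$ and $q_{r+1}\theta-p_{r+1}$ are nonzero (since $\theta$ is irrational) and of opposite signs;
\item hence the two terms of $q\theta-p$ have the same sign, and $|q\theta-p|\ge|q_r\theta-p_r|$ follows.
\end{itemize}

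The other common textbook proof (Hardy--Wright) goes differently. It expands $p/q$ as a finite continued fraction whose length has the parity matching the sign of $\theta-p/q$. It then writes $\theta=(\omega p_n+p_{n-1})/(\omega q_n+q_{n-1})$ and checks that $\omega>1$. That version shows directly that the expansion of $\theta$ begins with that of $p/q$. Yours instead gives the best-approximation inequality as a by-product.

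Three minor points:
\begin{itemize}
\item The reduction to $\gcd(p,q)=1$ is unnecessary. Neither the best-approximation step nor the bound $|pq_r-p_rq|\ge 1$ uses coprimality; you only need $p/q\ne p_r/q_r$ as rationals.
\item The listed fact $|q_r\theta-p_r|<1/q_{r+1}$ is never used.
\item You should define $r$ as the largest index with $q_r\le q$. It exists since $q_0=1\le q$ and $q_r\to\infty$, and then $q<q_{r+1}$ automatically. This also covers the case $q_0=q_1=1$ without appealing to strict monotonicity from $r=0$.
\end{itemize}
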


For any non-zero algebraic number $\gamma$ of degree $d$ over $\mathbb{Q}$ whose minimal polynomial over $\mathbb{Z}$ is $a\Pi_{j=1}^{d}(X-\gamma^{(j)})$, we denote by
\[
h(\gamma)=\frac{1}{d}\Big( \log{a}+\sum_{j=1}^{d}\log{}\max{(1,|\gamma^{(j)}|)}  \Big)
\]
its absolute logarithmic height. Here the $\gamma^{(j)}$ run through all the solutions to the minimal polynomial of $\gamma$, i.e. $\gamma$ and all its conjugates.

The following two lemmas are identical to the first two listed in Section 2 of \cite{T-al}. The first, due to Matveev, establishes a bound on a linear form in three logarithms. The second, due to Laurent, establishes a bound on a linear form in two logarithms. See \cite{Laur}, and \cite{Mat}. We use both in order to obtain usable bounds on all relevant parameters.

\begin{lemma}\label{Matveev}
Let $\Lambda$ be a linear form in logarithms of multiplicatively independent totally real algebraic numbers $\alpha_{1},\dots,\alpha_{N}$ with rational integer coefficients $b_{1},\dots,b_{N}$ ($b\neq 0$). Let $h(\alpha_{j})$ denote the absolute logarithmic height of $\alpha_{j}$ for $1\leq j\leq N$. Define the numbers $D,A_{j}$ $(1\leq j\leq N)$ and $E$ by 
\begin{align*}
D:=&[\mathbb{Q}(\alpha_{1},\dots,\alpha_{N}) : \mathbb{Q}],\\
A_{j}:=&\max{\{Dh(\alpha_{j}),|\log{\alpha_{j}}|\}},\\
E:=&\max{\{ 1,\max{\{|b_{j}|A_{j}/A_{N};1\leq j\leq N\}}\}}.
\end{align*}
Then
\[
\log{|\Lambda|}>-C(N)C_{0}W_{0}D^{2}\Omega,
\]
where
\begin{align*}
C(N)&:=\frac{8}{(N-1)!}(N+2)(2N+3)(4e(N+1))^{N+1},\\
C_{0}&:=\log{(e^{4.4N+7}N^{5.5}D^{2}\log{(eD)})},\\
W_{0}&:=\log{(1.5eED\log{(eD)})},\text{     } \Omega=A_{1}\dots A_{N}.
\end{align*}
\end{lemma}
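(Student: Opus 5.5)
This lemma is not something I would prove from scratch. It is a specialization of Matveev's lower bound for linear forms in logarithms \cite{Mat}, in the form quoted in \cite{T-al}. So the plan is to check that the stated constants follow from Matveev's main theorem in the real case, rather than to redo the transcendence argument.

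The first step is to recall Matveev's general statement. Take a number field $K$ of degree $D$ containing $\alpha_1,\dots,\alpha_N$, and fix determinations of the logarithms. Then $\Lambda=b_1\log\alpha_1+\cdots+b_N\log\alpha_N\neq0$ satisfies
\[
\log|\Lambda|>-C(N,\varkappa)\,D^{2}\,\log(eD)\,\Omega\,\log(eB'),
\]
where:
\begin{itemize}
\item $\varkappa=1$ if $K\subset\mathbb{R}$, and $\varkappa=2$ otherwise;
\item $A_j\ge\max\{Dh(\alpha_j),|\log\alpha_j|\}$, with an additional harmless lower bound such as $0.16$ that is automatic in our applications;
\item $B'$ is an explicit quantity built from the $|b_j|$ and the ratios $A_j/A_N$.
\end{itemize}

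Since the $\alpha_j$ here are totally real, $K=\mathbb{Q}(\alpha_1,\dots,\alpha_N)$ is real. We may therefore use $\varkappa=1$, which is exactly the case giving the smaller constant $\frac{8}{(N-1)!}(N+2)(2N+3)(4e(N+1))^{N+1}$. That matches $C(N)$ in Lemma~\ref{Matveev}.

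Next I would match the remaining factors. In the refined version (Matveev's Corollary~2.3 and its real-field form), the factor $\log(eD)\log(eB')$ is replaced by the product $C_0W_0$. Here $C_0=\log(e^{4.4N+7}N^{5.5}D^{2}\log(eD))$ absorbs the dependence on $N$ and $D$. The factor $W_0=\log(1.5eED\log(eD))$ carries the dependence on the coefficients through $E=\max\{1,\max_j|b_j|A_j/A_N\}$. I would check this identification line by line: substitute $\varkappa=1$, and verify that Matveev's auxiliary quantity for the coefficients coincides with $E$ as defined in Lemma~\ref{Matveev}. The hypothesis of multiplicative independence and $b_N\neq0$ gives $\Lambda\neq0$, which Matveev requires.

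The main obstacle is purely bookkeeping: Matveev's paper states several variants with slightly different normalizations of $A_j$ and of the coefficient parameter. The danger is citing a version whose constant differs from $C(N)C_0W_0$. I would resolve this by following the exact formulation reproduced in Section~2 of \cite{T-al}, which is the version used there for the analogous $D(4)$ problem, and then simply cite it. In the later application we only need $N=3$ and $D=2$ (with $\alpha_j$ in $\mathbb{Q}(\sqrt5)$ and related quadratic fields), so I would also record the resulting numerical constant once for later use.
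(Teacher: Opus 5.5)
Your plan matches the paper, which does not prove this lemma but quotes it from Section~2 of \cite{T-al}: it is Matveev's Corollary~2.3 in the real case $\varkappa=1$, and your check that $\varkappa=1$ turns Matveev's constant into $\frac{8}{(N-1)!}(N+2)(2N+3)(4e(N+1))^{N+1}$ is exactly the bookkeeping needed. One slip in your final remark: in the applications $D=[\mathbb{Q}(\sqrt5,\sqrt{F_{2n+4}F_{2n+8}}):\mathbb{Q}]=4$ (similarly $D=4$ with $F_{2n+6}F_{2n+12}$), not $2$, because the $\alpha_j$ lie in two different quadratic fields, so a numerical constant recorded with $D=2$ would give an invalid bound.
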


\begin{lemma}\label{Laurent}
Let $\gamma_{1}>1$ and $\gamma_{2}>1$ be two real multiplicatively independent algebraic numbers, $b_{1},b_{2}\in\mathbb{Z}$ not both $0$ and
\[
\Lambda=b_{2}\log{\gamma_{2}}-b_{1}\log{\gamma_{1}}.
\]
Let $D:=[\mathbb{Q}(\gamma_{1},\gamma_{2}):\mathbb{Q}]$. Let
\[
h_{i}\geq\max{\Big\{ h(\gamma_{i}),\frac{|\log{\gamma_{i}}|}{D},\frac{1}{D}  \Big\}}\text{ } for\text{ }i=1,2,\text{     } b'\geq\frac{|
b_{1}|}{Dh_{2}}+\frac{|b_{2}|}{Dh_{1}}.
\]
Then
\[
\log{|\Lambda |}\geq -17.9\cdot D^4\Big(\max{\Big\{\log{b'}+0.38,\frac{30}{D},1   \Big\}}\Big)^2h_{1}h_{2}.
\]
\end{lemma}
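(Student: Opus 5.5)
This is a quoted result (Laurent's Corollary 2 in \cite{Laur}), so in the paper it is taken from the literature rather than reproved. If I had to prove it, I would follow Laurent's method of interpolation determinants, specialised to real $\gamma_1,\gamma_2>1$. Write $\Lambda=b_2\log\gamma_2-b_1\log\gamma_1$ and assume $0<|\Lambda|$ is very small. We may assume $\gcd(b_1,b_2)=1$ and $b_1,b_2>0$, since otherwise $|\Lambda|$ is trivially large because the $\log\gamma_i$ are positive.

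\emph{Setting up the determinant.} Fix integer parameters $K\ge 2$, $L\ge 2$, $R_1,R_2,S_1,S_2\ge 1$ and a real $\rho>1$, to be optimised later. Consider the $N\times N$ matrix with $N=KL$ whose rows are indexed by pairs $(k,\ell)$ with $0\le k<K$, $0\le \ell<L$. Its columns are indexed by the $N$ points $(r,s)$ with $0\le r<R_1+R_2$, $0\le s<S_1+S_2$ (choosing $N$ of them). The entries are
\[
\binom{rb_2+sb_1}{k}\gamma_1^{\ell r}\gamma_2^{\ell s}.
\]
Let $\Delta$ be its determinant. Replacing $\log\gamma_2$ by $(b_1/b_2)\log\gamma_1+\Lambda/b_2$ shows that $\Delta$ is the value at $z=1$ of an analytic function of the form $\det\bigl(\varphi_{k\ell}(\zeta_{rs} z)\bigr)$.

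\emph{Upper bound.} Using the Schwarz-type lemma for determinants of analytic functions (the determinant vanishes to order about $N^2/2$ at the origin), we get
\[
\log|\Delta|\le -\tfrac{N^2}{2}\log\rho+ (\text{terms involving } LR\,a_1+LS\,a_2+K\log b')+N|\Lambda|\,(\cdots),
\]
where $a_i$ corresponds to $Dh_i$.

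\emph{Lower bound.} If $\Delta\ne0$, the Liouville inequality applied to the algebraic number $\Delta$ gives a lower bound of the form $-(D-1)\cdot(\text{height of }\Delta)$. That height is controlled by $K\log(\text{binomials})$ and $L(Rh_1+Sh_2)$.

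\emph{Nonvanishing.} This is the step I expect to be the main obstacle. The required zero estimate is the one proved for two-dimensional tori. Multiplicative independence of $\gamma_1,\gamma_2$ ensures that if $\Delta=0$, then a nonzero polynomial of bidegree $(K-1,L-1)$ vanishes on a structured set of $(R_1+R_2)(S_1+S_2)$ points. This is excluded provided $K L$ is less than suitable products of the $R_i,S_i$ counts, up to the degenerate cases. Those degenerate cases are handled by the coprimality of $b_1,b_2$. Laurent's refinement here allows the separate $R_1,R_2,S_1,S_2$.

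\emph{Optimisation.} Comparing the two bounds yields an inequality in which $\log|\Lambda|$ must be at least $-c\,D^4(\log b'+0.38)^2h_1h_2$, roughly. The numerical work consists of choosing $K\approx m\,L\,a_1a_2$, $L\approx D\log b'$ and $\rho$ appropriately, and absorbing the lower-order terms. Taking Laurent's parameter $m=10$ and $\rho$ near its optimum gives the constant $17.9$. The maximum with $30/D$ and $1$ covers the range where $\log b'$ is too small for the optimal choices to be admissible. At that step I would first just redo Laurent's explicit numerics, citing his Theorem 2 as the general statement.
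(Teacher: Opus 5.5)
Your proposal matches the paper, which gives no proof of this lemma and simply imports it from Laurent \cite{Laur} (as listed in \cite{T-al}), so the citation is the whole argument. One correction to your optional sketch: the constant $17.9$ together with the threshold $30/D$ corresponds to Laurent's parameter $m=30$. The choice $m=10$ instead gives the variant $\log|\Lambda|\ge -25.2\,D^4(\max\{\log b'+0.38,10/D,1\})^2h_1h_2$, so redoing the numerics with $m=10$ would not give the stated bound. Also, the zero estimate behind the non-vanishing step is on $\mathbb{G}_a\times\mathbb{G}_m$ rather than a two-dimensional torus, since the rows are $\binom{x}{k}y^{\ell}$.
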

%


The following two lemmas will be useful in applying \Cref{Matveev}.

\begin{lemma}\label{first nosquare}
$F_{2n+4}F_{2n+8}$ is neither a square nor $5$ times a square.
\end{lemma}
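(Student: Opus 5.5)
The plan is to reduce the statement to an identity that writes $F_{2n+4}F_{2n+8}$ in terms of the single Fibonacci number $F_{2n+6}$. I would then rule out both possibilities by elementary arguments, without any prime-divisor or linear-form machinery.

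\textbf{Step 1.} Apply Catalan's identity from the introduction with index $2n+6$ and $r=2$. Since $(-1)^{2n+4}=1$ and $F_2=1$, this gives
\[
F_{2n+6}^{2}-F_{2n+4}F_{2n+8}=F_{2}^{2}=1,\qquad\text{so}\qquad F_{2n+4}F_{2n+8}=F_{2n+6}^{2}-1.
\]

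\textbf{Step 2 (not a square).} Put $x=F_{2n+6}$, so $x\geq F_{8}=21$. Then $(x-1)^{2}<x^{2}-1<x^{2}$. The product therefore lies strictly between two consecutive squares and cannot be a square.

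\textbf{Step 3 (not $5$ times a square).} Suppose $F_{2n+4}F_{2n+8}=5y^{2}$. Then $x^{2}-5y^{2}=1$ with $y\neq0$, because the product is positive. I would now use the standard Lucas--Fibonacci identity $L_{m}^{2}-5F_{m}^{2}=4(-1)^{m}$, which follows from Binet's formula together with $L_m=\alpha^m+\overline{\alpha}^m$. Taking the even index $m=2n+6$ gives $L_{2n+6}^{2}=5x^{2}+4$. Substituting $x^{2}=5y^{2}+1$ yields
\[
L_{2n+6}^{2}-25y^{2}=9,\qquad\text{i.e.}\qquad (L_{2n+6}-5y)(L_{2n+6}+5y)=9.
\]
We may take $y>0$. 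The two factors are then positive divisors of $9$ with the second strictly larger, so they are $1$ and $9$. This forces $10y=8$, which is impossible for an integer $y$. Hence the product is not $5$ times a square.

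The only step needing care is Step 3. There the key idea is to combine the Pell equation $x^2-5y^2=1$ with the second relation $L^2=5F^2+4$ satisfied by the even-indexed Fibonacci number $x$, which collapses everything into a factorization of $9$. If I did not want to introduce Lucas numbers, I would instead use $F_{2n+4}F_{2n+8}=F_{2n+6}^2-1=(F_{2n+6}-1)(F_{2n+6}+1)$ and argue via the gcd of the factors. I expect the Lucas identity route to be shorter.
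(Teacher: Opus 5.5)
Your proof is correct, and for the ``five times a square'' half it takes a genuinely different route from the paper. The non-square half is the same as the paper's: both rest on $F_{2n+4}F_{2n+8}=F_{2n+6}^{2}-1$, and you simply spell out the consecutive-squares step.

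For the other half, the paper works with the solution structure of $X^{2}-5Y^{2}=1$. Since the fundamental solution is $9+4\sqrt{5}=\alpha^{6}$, every solution has $X=(\alpha^{6j}+\alpha^{-6j})/2$. Binet's formula and explicit size estimates then squeeze $6j$ strictly between $2n+4$ and $2n+6$, which is impossible. You instead use a second norm equation satisfied by $x=F_{2n+6}$, namely $L_{2n+6}^{2}=5x^{2}+4$. Eliminating $x$ gives $(L_{2n+6}-5y)(L_{2n+6}+5y)=9$, which is settled by checking divisors.

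What your version buys:
\begin{itemize}
\item It needs neither the fundamental unit nor any inequalities.
\item It shows more generally that $F_m^{2}-1$ is never five times a nonzero square for even $m$.
\item The same trick disposes of Lemma~\ref{second nosquare} without Nagell's bound on fundamental solutions. There the index $2n+9$ is odd, so $L^{2}=5x^{2}-4$ and $x^{2}=5y^{2}+4$ give $(L-5y)(L+5y)=16$, which has no admissible factorization with $y>0$.
\end{itemize}
The paper's approach is the template (Pell solution classes plus a Binet size comparison) that it also uses for Lemma~\ref{second nosquare}.

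One caution about your fallback: a gcd argument on $(x-1)(x+1)$ cannot work on its own, since $9^{2}-1=5\cdot 4^{2}$. Some input specific to Fibonacci numbers is indispensable, and the Lucas identity is exactly where that input enters your main argument.
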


\begin{proof}
To see that $F_{2n+4}F_{2n+8}$ is not a square, simply note that $F_{2n+4}F_{2n+8}+1=F_{2n+6}^{2}$. Suppose that $F_{2n+4}F_{2n+8}$ is $5$ times a square, so $F_{2n+4}F_{2n+8}=5y^{2}$ for some integer $y$. Then we must have that $5y^{2}+1=F_{2n+6}^{2}$. We examine the solutions of the Pellian equation
\begin{equation}
X^{2}-5Y^{2}=1,
\end{equation}
with our interest lying in those solutions $X+Y\sqrt{5}$ for which $X=F_{2n+6}$ for some positive integer $n$.

This Pellian equation has fundamental solution $9+4\sqrt{5}=\alpha^{6}$, where here $\alpha=\dfrac{1+\sqrt{5}}{2}$. Hence all solutions to equation $(1)$ have the form $(9+4\sqrt{5})^{j}$ for some $j\in\mathbb{Z}^{+}$. If we let $X_{j}+Y_{j}\sqrt{5}=(9+4\sqrt{5})^{j}$, then $X_{j}=\dfrac{(9+4\sqrt{5})^{j}+(9+4\sqrt{5})^{-j}}{2}=\dfrac{\alpha^{6j}+\alpha^{-6j}}{2}$. Since $F_{k}=\dfrac{\alpha^{k}-(\frac{-1}{\alpha})^{k}}{\sqrt{5}}$, our aim becomes to solve the equation
\begin{equation}
\frac{\alpha^{6j}+\alpha^{-6j}}{2}=\frac{\alpha^{2n+6}-\alpha^{-2n-6}}{\sqrt{5}}
\end{equation}
for $n,j\in\mathbb{Z}^{+}$.

\vspace{5mm}

We see here that for $n\geq 1$,
\begin{gather*}
\frac{\alpha^{2n+4}+\alpha^{-2n-4}}{2}<\frac{\alpha^{4}}{2}\alpha^{2n}+\frac{1}{2}<8\alpha^{2n}-1\\
<\frac{\alpha^{6}\alpha^{2n}-\alpha^{-2n-6}}{\sqrt{5}}=\frac{\alpha^{2n+6}-\alpha^{2n-6}}{\sqrt{5}},
\end{gather*}
hence we must have $j>\frac{n+2}{3}$. However,
\[
\frac{\alpha^{2n+6}+\alpha^{-2n-6}}{2}>\frac{\alpha^{2n+6}+\alpha^{-2n-6}}{\sqrt{5}}>\frac{\alpha^{2n+6}-\alpha^{-2n-6}}{\sqrt{5}},
\]
which means $j<\frac{n+3}{3}$.

\vspace{2mm}

The bounds $\frac{n+2}{3}<j<\frac{n+3}{3}$ mean that $j$ cannot be an integer, which is a contradiction. Thus there is no solution to the Pellian equation $(1)$ wherein $X$ is a Fibonacci number with index $2n+6,$  $n\in\mathbb{Z}^{+}$, which means that $F_{2n+4}F_{2n+8}$ is neither a square nor $5$ times a square.
\end{proof}

\begin{lemma}\label{second nosquare}
$F_{2n+6}F_{2n+12}$ is neither a square nor $5$ times a square.
\end{lemma}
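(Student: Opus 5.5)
The plan mirrors the proof of \Cref{first nosquare}, but uses Catalan's identity with $r=3$ instead of $r=1$. Taking the centre index $2n+9$ and $r=3$ in Catalan's identity gives
\[
F_{2n+9}^{2}-F_{2n+6}F_{2n+12}=(-1)^{2n+6}F_{3}^{2}=4,
\]
so $F_{2n+6}F_{2n+12}+4=F_{2n+9}^{2}$.

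\emph{Not a square.} Suppose $F_{2n+6}F_{2n+12}=y^{2}$ with $y\geq 1$. Then $(F_{2n+9}-y)(F_{2n+9}+y)=4$. Both factors are positive integers of the same parity, so the only possibility is $F_{2n+9}-y=F_{2n+9}+y=2$. This forces $y=0$, which is absurd since the product is positive.

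\emph{Not five times a square.} Suppose $F_{2n+6}F_{2n+12}=5y^{2}$. Then $(X,Y)=(F_{2n+9},y)$ is a positive solution of
\[
X^{2}-5Y^{2}=4 .
\]
First I will show that every such solution has $X$ a Lucas number.
\begin{itemize}
\item Reducing mod $4$ forces $X\equiv Y \pmod 2$, so $\frac{X+Y\sqrt5}{2}$ lies in $\mathbb{Z}[\alpha]$.
\item This element has norm $1$, so it is a unit of norm $1$ greater than $1$.
\item The units of $\mathbb{Z}[\alpha]$ are $\pm\alpha^{m}$, and $N(\alpha)=-1$, so $\frac{X+Y\sqrt5}{2}=\alpha^{2j}$ for some $j\geq 1$.
\item Adding this to its conjugate $\overline{\alpha}^{2j}$ gives $X=\alpha^{2j}+\overline{\alpha}^{2j}=L_{2j}$.
\end{itemize}
So it suffices to show that $F_{2n+9}$ is never a Lucas number.

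Put $m=2n+9\geq 11$. I will trap $F_m$ strictly between two consecutive Lucas numbers, using $L_t=F_{t-1}+F_{t+1}$ and the strict monotonicity of $F_i$ for $i\geq 2$.
\begin{itemize}
\item Upper bound: $L_{m-1}=F_{m-2}+F_{m}>F_{m}$.
\item Lower bound: $L_{m-2}=F_{m-3}+F_{m-1}<F_{m-2}+F_{m-1}=F_{m}$, since $F_{m-3}<F_{m-2}$ for $m-3\geq 2$.
\end{itemize}
Thus $L_{m-2}<F_m<L_{m-1}$. Since the Lucas numbers are increasing from $L_1$ on, $F_m$ is not a Lucas number, which gives the contradiction.

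The only step needing care is identifying all solutions of $X^{2}-5Y^{2}=4$ with the $L_{2j}$. The delicate points are that half-integers are allowed in $\mathbb{Z}[\alpha]$ and that $\alpha$ has norm $-1$, which is why only even powers occur. One could avoid units entirely by an inequality argument like the one in \Cref{first nosquare}, bounding $j$ between $\frac{m-2}{2}$ and $\frac{m-1}{2}$ to get a non-integer. I would use the unit argument, which is cleaner, with the sandwich above finishing the proof.
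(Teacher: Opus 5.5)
Your proof is correct. The ``not a square'' half matches the paper's argument: Catalan with $r=3$ gives $F_{2n+6}F_{2n+12}+4=F_{2n+9}^{2}$, followed by a difference-of-squares argument. For the ``five times a square'' half, however, you take a genuinely different route.

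The paper stays in $\mathbb{Z}[\sqrt5]$. It uses Nagell's bounds relative to the Pell unit $9+4\sqrt5=\alpha^{6}$ to list class representatives of $X^{2}-5Y^{2}=4$, expresses $X$ through $\alpha^{6j}$, and traps the exponent by decimal estimates to get $\frac{n+2}{3}<j<\frac{n+3}{3}$. You pass instead to the maximal order $\mathbb{Z}[\alpha]$, where the parity condition and $N(\alpha)=-1$ collapse every solution into the single family $X=L_{2j}$. You then replace the numerical trapping by the exact sandwich $L_{m-2}<F_{m}<L_{m-1}$.

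The paper's method is a more generic template, since it needs only the Pell unit and not the unit group of the ring of integers. Yours is shorter, needs no numerical estimates, and treats all solution classes at once. That last point is a real gain here. As written, the paper keeps only $(3+\sqrt5)(9+4\sqrt5)^{j}$ with $j\geq 1$, i.e.\ $X=L_{6j+2}$. But Nagell's bound actually allows $v=0$, which gives the class of $2$ and $X=L_{6j}$. Also, the negatives $(3-\sqrt5)(9+4\sqrt5)^{j}$ of the discarded solutions give positive $X=L_{6j-2}$. Your argument covers these automatically.
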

\begin{proof}

To see that it is not itself a square, observe that $F_{2n+6}F_{2n+12}+4=F_{2n+9}^{2}$, and the difference between any two nonzero, nonidentical squares is either odd or bigger than $4$. Suppose that $F_{2n+6}F_{2n+12}$ is $5$ times a square, so $F_{2n+6}F_{2n+12}=5y^{2}$ for some integer $y$. We're interested in solutions to the Pellian equation
\begin{equation}
X^{2}-5Y^{2}=4
\end{equation}
with $X=F_{2n+9}$ for some $n\in\mathbb{Z}^{+}$. Since the fundamental solution of $X^{2}-5Y^{2}=1$ is $9+4\sqrt{5}$, a theorem of Nagell (see \cite{Nag}) gives us that all classes of solutions to equation $(3)$ have fundamental solution $u+v\sqrt{5}$ with

\begin{gather*}
0<v\leq \frac{4}{\sqrt{2(9+1)}}\sqrt{4}=\frac{4}{\sqrt{5}}<2 \text{ and}\\
0\leq |u|\leq\sqrt{\frac{1}{2}(9+1)\cdot 4}=2\sqrt{5}<5,
\end{gather*}
so all solutions to $(3)$ are of the form
\[
(\pm 3+\sqrt{5})(9+4\sqrt{5})^{j}
\]
for some integer $j\geq 1$. If we let $V_{j}+U_{j}\sqrt{5}=(9+4\sqrt{5})^{j}$, then we have solutions $X_{j}+Y_{j}\sqrt{5}$, where $X_{j}=\pm 3V_{j}+5U_{j}$ and $Y_{j}=V_{j}\pm 3U_{j}$. This means that we want to find solutions $(j,n)$ to the equation
\[
X_{j}=F_{2n+9}=\frac{\alpha^{2n+9}-(\frac{-1}{\alpha})^{2n+9}}{\sqrt{5}}=\frac{\alpha^{2n+9}+\alpha^{-2n-9}}{\sqrt{5}}.
\]
Since $(9+4\sqrt{5})=\alpha^{6}$, we have $V_{j}=\frac{\alpha^{6j}+\alpha^{-6j}}{2}$ and our aim is to solve the following
\[
\pm 3(\frac{\alpha^{6j}+\alpha^{-6j}}{2})+5(\frac{\alpha^{6j}-\alpha^{-6j}}{2\sqrt{5}})=\frac{\alpha^{2n+9}+\alpha^{-2n-9}}{\sqrt{5}}.
\]
Noting that $-3\sqrt{5}+5$ and $-3\sqrt{5}-5$ are both less than $0$ and will never yield a solution, we obtain after cancelling denominators:
\begin{equation}
(3\sqrt{5}+5)\alpha^{6j}+(3\sqrt{5}-5)\alpha^{6j}=2\alpha^{2n+9}+2\alpha^{-2n-9}.
\end{equation}

We see from a brief observation that
\begin{gather*}
(3\sqrt{5}+5)\alpha^{2n+6}+(3\sqrt{5}-5)\alpha^{-2n-6}>2.76\alpha^{2n+9}+2.76\alpha^{-2n-9}\\
>2\alpha^{2n+9}+2\alpha^{-2n-9},
\end{gather*}
meaning that $j<\frac{n+3}{3}$, and
\begin{gather*}
(3\sqrt{5}+5)\alpha^{2n+4}+(3\sqrt{5}-5)\alpha^{-2n-4}<1.06\alpha^{2n+9}+18.95\alpha^{2n-9}\\
<2\alpha^{2n+9}<2\alpha^{2n+9}+2\alpha^{-2n-9}
\end{gather*}
which gives us $\frac{n+2}{3}<j<\frac{n+3}{3}$, contradicting $j\in\mathbb{Z}$. Thus we see that $F_{2n+6}F_{2n+12}$ is neither a square nor $5$ times a square.
\end{proof}


\section{Lemma on Pellian Equations}\label{PellLemma}

In this section, we generalize a lemma of Filipin and Ba\'ci\'c (Lemma 1 in \cite{B-F}), which is similar to a result in \cite{J} and was subsequently used in \cite{T-al}. We use the same ideas as in the proof in \cite{B-F}, however there was a mistake in the proof which is amended here.

\vspace{5mm}

Let $l$ be a positive integer and $\{a,b,c\}$ be a $D(l^{2})$-triple, i.e. there exist positive integers $r,s,t$ such that
\[
ab+l^2=r^2,\quad ac+l^2=s^2, \quad\text{and}\quad bc+l^2=t^2.
\]
\begin{lemma}\label{Filipin}
Let $\{a,b,c\}$ be a $D(l^2)$ triple with $a<b<a\Big(4+\dfrac{4}{l^2}\Big)$ and assume that one of the following conditions holds:
\renewcommand{\labelenumi}{\roman{enumi})}
\begin{enumerate}
\item $l=2$,
\item $l$ is an odd prime and $l\mid ab$, or
\item $l^2\mid a$ or $l^2\mid b$.
\end{enumerate}
Then all solutions of the equation
\begin{equation} \label{eq:Pellian Equation 1}
at^2-bs^2=l^2(a-b)
\end{equation}
are of the form
\[
t\sqrt{a}+s\sqrt{b}=(\pm l\sqrt{a}+l\sqrt{b})\Big(\frac{r+\sqrt{ab}}{l}\Big)^{\nu},
\]
where $\nu\in\mathbb{Z}^{+}$.
\end{lemma}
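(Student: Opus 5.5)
We follow the classical descent argument of Ba\'ci\'c and Filipin. Put $\varepsilon=\frac{r+\sqrt{ab}}{l}$. Since $r^2-ab=l^2$, we have $\varepsilon\bar\varepsilon=(r^2-ab)/l^2=1$. Multiplication by $\varepsilon^{\pm1}$ sends solutions of \eqref{eq:Pellian Equation 1} to solutions: if $t\sqrt a+s\sqrt b=\bigl(t\sqrt a+s\sqrt b\bigr)\varepsilon^{-1}$ is expanded, the new pair is
\[
t'=\frac{rt-bs}{l},\qquad s'=\frac{rs-at}{l},
\]
and the form $at^2-bs^2$ is preserved. The first task is to check that $t',s'$ are \emph{integers}; this is where hypotheses i)--iii) enter, and we expect it to be the main obstacle (it is presumably also where the original proof erred). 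We must show $l\mid rt-bs$ and $l\mid rs-at$ for every integral solution.

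For this integrality step we work modulo $l$ (or modulo $l^2$). From $at^2-bs^2=l^2(a-b)$ and $r^2\equiv ab\pmod{l^2}$ we get $a(rt-bs)(rt+bs)=a r^2t^2-a b^2s^2\equiv ab(at^2-bs^2)\equiv0\pmod{l^2}$, and similarly for $rs-at$.

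In case iii), say $l^2\mid a$, we have $l\mid r$. Then $bs^2\equiv0 \pmod{l^2}$ together with $\gcd$ considerations force $l\mid s$ or enough divisibility to give $l\mid bs$ and $l\mid at$ directly. We treat the divisibility of $b$ and $s$ via $r^2=ab+l^2$.

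In case ii), $l$ is prime and divides $ab$, hence $r$. One uses primality to split $l\mid a(rt-bs)(rt+bs)$ into factors, and shows that a suitable choice of sign, namely replacing the solution by its conjugate $(\pm t)$, which is allowed by the $\pm$ in the statement, makes $l$ divide the relevant factor.

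In case i), $l=2$, a parity analysis of $r,s,t,a,b$ does the same job.

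Having integrality, we perform descent. Among solutions in a given class, pick $(t,s)$ with $s>0$ minimal. The inequality $a<b<a(4+4/l^2)$ will be used to bound the fundamental solution, following Nagell-type estimates. The standard computation shows that $t'$ and $s'$ satisfy
\[
|s'|<s
\]
unless $s$ is small. Concretely, we will show that a minimal solution satisfies $s^2\le \frac{l^2(b-a)}{\ldots}$-type bounds. Explicitly, from $at^2-bs^2=l^2(a-b)<0$ and minimality under $\varepsilon^{\pm1}$ one derives $0<s\le l\sqrt{\frac{b-a}{\ldots}}$, and then $b<a(4+4/l^2)$ forces $s<2l$ or so.

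It then remains to enumerate the possibilities. Plugging small $s$ into $at^2=bs^2+l^2(a-b)$ and using $a<b$ leaves only $s=l$, $t=\pm l$. Hence every solution is $(\pm l\sqrt a+l\sqrt b)\varepsilon^{\nu}$, and positivity of $s,t$ gives $\nu\ge0$ (with $\nu\in\mathbb Z^+$ after relabeling the sign, as stated). The main care is in checking that $s=l$ is the only surviving value when $l\ge3$. For this we use $b<a(4+4/l^2)$ to rule out intermediate $s$ such as $s$ not divisible by $l$, via the congruence conditions again.
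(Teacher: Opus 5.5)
There is a genuine gap, and it is the termination of your descent. Minimality of $s>0$ in an $\varepsilon$-orbit cannot give $s<2l$. The Nagell-type estimate you allude to picks the orbit element $\xi=t\sqrt a+s\sqrt b$ with $\varepsilon^{-1/2}\le \xi/(l\sqrt{b-a})\le\varepsilon^{1/2}$ and uses $2s\sqrt b=\xi+l^2(b-a)/\xi$. It yields only
\[
s^2\le \frac{l(b-a)(r+l)}{2b},
\]
which grows linearly with $a$. In terms of $c=(s^2-l^2)/a$ it says merely $0\le c<l$ (for $l\ge2$, using $b<a(4+4/l^2)$). So after the size argument you still face an arithmetic question: can an extension $c$ with $0<c<l$ lie outside the orbits of $(\pm l,l)$? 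Neither ``plugging in small $s$'' nor congruences modulo $l$ settles it.

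The paper never bounds $s$. It descends on the third element, $c\mapsto c'=((s')^2-l^2)/a$, using that $\{a,b,c'\}$ is again a $D(l^2)$-triple. It shows $c'\ge0$ and $c'<c$ whenever $c\ge b$. In the terminal case $0<c'<b$ it uses the regular quadruple $\{a,b',c',b\}$ to get $b>\tfrac{4}{l^2}ab'c'$, hence $b'c'<l^2+1$; that is where $b<a(4+4/l^2)$ enters.

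Your integrality step is fine in substance, but it needs no change of sign, which would anyway move you to a different class. Except when $l=2$ and $ab$ is odd (where parity gives $t\equiv s$), one has $l\mid r$. Reducing the equation modulo $l^2$ then gives $l\mid at$ and $l\mid bs$.

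Worse, the missing step is false for general $l$, so no completion of your outline can prove the statement as given. Take $l=5$, $a=336$, $b=1200$. Then $ab+25=635^2$, $a<b<4.16a$, $5\mid ab$ and $25\mid b$, and $c=1$ gives $ac+25=19^2$ and $bc+25=35^2$. Here $\sqrt{ab}=240\sqrt7$ and $\varepsilon=(635+240\sqrt7)/5=127+48\sqrt7=(8+3\sqrt7)^2$, while
\[
35\sqrt{336}+19\sqrt{1200}=\bigl(-5\sqrt{336}+5\sqrt{1200}\bigr)(8+3\sqrt7).
\]
So $(t,s)=(35,19)$ solves the equation but lies in neither $\varepsilon$-orbit of $(\pm5,5)$; relative to $(-5,5)$ it corresponds to $\nu=\tfrac12$. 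Its orbit has minimal $s=19>2l$, and its neighbours are $(-115,61)$ and $(9005,4765)$, i.e.\ $c=11$ and $c=67575$.

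The same example breaks the paper's final inference ``$b'c'<l^2+1$, hence $b'=0$''. Descending from $c=67575$ one reaches $c'=1$ and then $b'=11$, with $b'c'+25=6^2$. That inference is valid for $l=2$: if $b'c'\le4$ and $b'c'+4$ is a square, then $b'c'=0$. For $l\ge3$, however, the value $b'c'=2l+1\le l^2$ has to be ruled out by an extra argument, and for $l=5$ it cannot be.
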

\begin{proof}
Define $s'=\dfrac{rs-at}{l}$, $t'=\dfrac{rt-bs}{l}$ and $c'=\dfrac{(s')^2-l^2}{a}$. The triple $\{a,b,c'\}$ is also a $D(l^2)$-triple. In the case of $l=2$ (the lemma found in \cite{B-F}), $2\mid(rs-at)$ and $2\mid(rt-bs)$ no matter whether $a$ and $b$ are both odd or not. For $l>2$ this may not be true. If $l$ is prime, $l\mid ab$ and if not, $l^2\mid a$ or $l^2\mid b$, so we must have that $l\mid r$, and in addition that $l\mid s$ or $l\mid t$, thus $s'$ and $t'$ are always integers. Moreover, since
\[
t\sqrt{a}+s\sqrt{b}=(t'\sqrt{a}+s'\sqrt{b})\Big(\frac{r+\sqrt{ab}}{l}\Big),
\]
$(t',s')$ belongs to the \emph{same class} of solutions of \eqref{eq:Pellian Equation 1} as $(t,s)$, thus we can replace $c=c'$ and follow the process again with the triple $\{a,b,c'\}$ while always remaining in the same class of solutions. This will be the key to our proof. The above information means that if we let
\[
t_{\nu}^{\pm}\sqrt{a}+s_{\nu}^{\pm}\sqrt{b}=(\pm l\sqrt{a}+l\sqrt{b})\Big(\frac{r+\sqrt{ab}}{l}\Big)^{\nu},
\]
then if after a certain number of times repeating the process of finding $c'$ and replacing $c=c'$ in our triple, we have that $(t',s')=(\pm l,l)$, or equivalently, if $c'=a+b\pm2r$, then we must have that $(t,s)=(t_{\nu}^{\pm},s_{\nu}^{\pm})$ for some positive integer $\nu$. Thus at this point the proof will be complete.

\vspace{5mm}
\begin{itemize}
\item $s'$ is always positive.
\noindent To see this, observe the Pellian equation:
\[
bs^2-at^2=l^2(b-a)
\]
obtained by multiplying \eqref{eq:Pellian Equation 1} by $-1$. Multiplying by $a$, we get
\[
abs^2-a^2t^2=(r^2-l^2)s^2-a^2t^2=r^2s^2-a^2t^2-l^2s^2=l^2a(b-a)
\]
which means that
\[
(rs-at)(rs+at)=l^2a(b-a)+l^2s^2.
\]
Since $r,s,a,t,l$ are all positive and $0<a<b$, we must have that $rs-at=ls'>0$.

\vspace{5mm}
\item $c'\geq 0$.
If $l$ is prime and $l\mid a$, then $l\mid ab+l^2=r^2$, which means $l\mid r$ and similarly, $l\mid s$. If $l$ also divides $b$, then $l\mid t$, and so $l^2\mid rs-at$, meaning $s'\geq l$, and so $c'\geq 0$. If $l\nmid b$, then $ab+l^2=r^2$ implies $l^2\mid a$, giving $l^2\mid rs-at$, and again $c'\geq 0$. A similar process proves the remark in the case $l$ prime and $l\mid b$.

If $l$ is composite and $l^2\mid a$, then $l^2\mid ab+l^2=r^2$ and $l^2\mid ac+l^2=s^2$. So $l^2\mid rs-at$, which means that
\[
l\leq \frac{rs-at}{l}=s',
\]
and $c'=\dfrac{(s')^2-l^2}{a}\geq 0$.

Similarly if $l^2\mid b$, then $l^2\mid ab+l^2=r^2$ and $l^2\mid bc+l^2=t^2$. So $l^2\mid rt-bs$, which means that
\[
l\leq \Big|\frac{rt-bs}{l}\Big|=|t'|.
\]
Therefore $c'=\dfrac{(t')^2-l^2}{b}\geq 0$.

If $l=2$, we show that $c'\geq 0$ regardless of whether it divides $a$ or $b$. If $2\mid ab$, then the proof is the same as above since we have $l$ prime and $l\mid ab$. Suppose that both $a$ and $b$ are odd and suppose for a contradiction that $c'<0$. This means that $s'^2-4<0$, which is equivalent to $s'<2$. Since $s'$ is always positive, this must mean that $s'=1$, so we set $\dfrac{rs-at}{2}=1$. Multiplying by 2 and adding $at$ to each side, then squaring, we get $r^2s^2=(2+at)^2$, giving $a^2bc+4ab+4ac+16=4+4at+a^2bc+4a^2$. We subtract $a^2bc$, divide by $4$ and rearrange to obtain
\[
-3=a(b+c-a-t).
\]
Since we assumed that $a$ and $b$ are both $1\pmod{2}$, this must mean that $c-t\equiv 1\pmod{2}$. However because of the fact that $b\equiv 1\pmod{2}$, we must have that $t^2= bc+4\equiv c\pmod{2}$, and so $t\equiv c\pmod{2}$. Therefore $a(b+c-a-t)\equiv 0\pmod 2$ and we have obtained a contradiction. Hence $c'\geq 0$ when $l=2$.

\item{Case when $0\leq c'<b$.}
If $c'=0$, then $s'=t'=l$.  So $c=a+b+c'+\frac{2}{l^2}(abc'+rs't')=a+b+2r$, and the proof is complete.

\vspace{5mm}

Suppose that $0<c'<b$. Define $r'=(s')'=\dfrac{rs'-at'}{l}$ and $b'=(c')'=\dfrac{(r')^2-l^2}{a}$. Then $b'=(c')'=a+b+c'+\dfrac{2}{l^2}(abc'-rs't')$. We have that $\{a,(c')',c',b\}$ is a regular $D(l^2)$-quadruple. Indeed
\begin{align*}
ab+l^2=r^2,&&\quad ac'+l^2=(s')^2,&&\quad bc'+l^2=(t')^2\\
ab'+l^2=(r')^2,&&\quad bb'+l^2=(q')^2,&&\quad b'c'+l^2=u^2
\end{align*}
where
\[
u=\frac{s't'-rc'}{l}\quad\text{and}\quad q'=(t')'=\frac{rt'-bs'}{l}\text{ both}\in\mathbb{Z}.
\]
Because $\{a,c',b\}$ is a $D(l^2)$-triple, we have
\[
ac'+l^2=(s')^2,\quad
ab+l^2=r^2,\quad
bc'+l^2=q^2,
\]
and
\[
r=\frac{(s')r'+aq'}{l},\quad
z=\frac{(s')q'+cr'}{l}
\]
\[
r'=\frac{(s')r-aq}{l},\quad
q'=\frac{(s')q-c'r}{l}
\]
It can be seen that
\begin{align*}
ab+l^2=r^2&=\Big(\frac{(s')r'+aq'}{l}\Big)^2=\frac{1}{l^2}((s')^2(ab'+l^2)+a^2(b'c'+l^2)+2(s')ar'q')\\
&=\frac{1}{l^2}((s')^2ab'+l^2ac'+l^4+a^2(b'c'+l^2)+2(s')r'aq'),
\end{align*}
so
\begin{align*}
b&=\frac{1}{l^2}(s')^2b'+c'+\frac{1}{l^2}ab'c'+a+\frac{2}{l^2}(s')r'q'\\
&>\frac{2}{l^2}ab'c'+c'+a+\frac{2}{l^2}\sqrt{ac'}\sqrt{ab'}\sqrt{b'c'}\\
&=\frac{4}{l^2}ab'c'.
\end{align*}
Hence 
\[
b'<\dfrac{l^2b}{4ac'}<\dfrac{\dfrac{l^2}{4}(4+\dfrac{4}{l^2})}{ac'}=\dfrac{l^2+1}{c'},
\]
 which means $b'c'<l^2+1$. But since $\{a,b',c'\}$ is a $D(l^2)$-triple,  $(c')'=b'=0$, so $c'=a+b-2r$.

\vspace{5mm}
\item{We may assume $c'<b$.}
The proof proceeds by showing that we can assume $c'<b$. It is here that the mistake in \cite{B-F} arose. The proof of Lemma 1 in that paper used a version of inequalities (3.7) and (3.8) found in \cite{J}, modified for $D(4)$-triples in order to show that it could be assumed that $c'<r^{2}$. The inequality was:
\[
ac'=(s')^2-4<\frac{s^2}{r^2}-4=\frac{ac+4}{r^2}-4<\frac{ac}{r^2}.
\]
In \cite{J}, the above inequality was shown to hold for $D(1)$-triples provided we assume that $b\leq a+c$, but in the case of $D(4)$-triples this inequality will never hold. In particular,
\begin{equation}\label{eq:Mistake}
s'<\frac{s}{r}
\end{equation}
is never true. To see this, note that we have
\begin{align*}
4s&=s(r^2-ab)=r^2s-art+art+abs\\
&=r(rs-at)+a(rt-bs)\\
&=\frac{4(rs'+at')}{2}.
\end{align*}
So $s=\dfrac{rs'+at'}{2}$, which means inequality \eqref{eq:Mistake} is
\[
rs'<\frac{rs'+at'}{2},
\]
which is equivalent to
\[
rs'<at'.
\]
Since $s'$ is always positive the inequality will always fail when $t'\leq 0$, so we assume that $t'>0$. Squaring both sides, we obtain the equivalent inequality
\begin{equation}\label{eq:Mistake 2}
r^2(s')^2<a^2(t')^2.
\end{equation}
Substituting $r^2=ab+4,(s')^2=ac'+4,(t')^2=bc'+4$ into \eqref{eq:Mistake 2}, we get
\begin{align*}
r^2(s')^2&=
a^2bc'+4ab+4ac'+16<a^2(t')^2
=a^2bc'+4a^2
\end{align*}
so our original inequality is equivalent to
\[
ab+ac'+4<a^2
\]
which is never true since $a<b$.

\vspace{5mm}

Now let $\{a,b,c\}$ be a $D(l^2)$-triple with $r,s,t,c',s',t'$ defined as above. If $c<b$ then we are done, since $c=\overline{c}'$ for the triple $\Big\{a,b,\overline{c}=a+b+c+\dfrac{2}{l^2}(abc+rst)\Big\}$ which falls under the previous case (since $0\leq\overline{c}'<b$).

Suppose that $c\geq b>a>0$. We want to show that we can assume $c'<b$. To do this, we suppose that $c'\geq c$, and show that a contradiction arises. This means that if $c\geq b$ then we must have $c'<c$, meaning that we can keep on replacing $c'=c$ and repeating with the new triple and eventually we will obtain $c'<b$.

If $c'\geq c$, then
$c'\geq a+b+c'+\frac{2}{l^2}(abc'+rs't')$, meaning
\[
a+b+\frac{2}{l^2}(abc'+rs't')\leq 0
\]
which implies $t'<0$. So we must have $rt-bs<0$, and hence $r^2t^2<b^2s^2$, which gives $(ab+l^2)(bc+l^2)<b^2(ac+l^2)$.
Rearranging this last expression gives $$ab+bc+l^2<b^2,$$ which means $c<b-a$, contradicting our assumption that $c\geq b$.
\end{itemize}

\vspace{5mm}

Therefore any $D(l^2)$-triple $\{a,b,c\}$ with $a<b<4(1+\dfrac{4}{l^2})$ can, through repetition of this process of taking $c'=c$, be reduced to a triple for which $0\leq c'<b$, and we have established that the Pellian equation \eqref{eq:Pellian Equation 1} stemming from such triples has its solutions in the class of $(\pm l,l)$.
\end{proof}



\section{The $D(9)$-Triple}\label{D(9)}

The next two sections follow a procedure which resembles that contained in \cite{T-al}, where a similar result on $D(4)$ triples of the form $\{F_{k}, 4F_{2n+4}, F_{2n+6}\}$ was proven.

\vspace{5mm}

Let us begin by setting up a Pellian equation. Given that $\{F_{2n+8},9F_{2n+4},F_{k}\}$ is a $D(9)$-triple, we must have that
\begin{equation}\label{eq:XY}
F_{2n+8}F_{k}+9=X^{2},\text{ and } 9F_{2n+4}F_{k}+9=Y^{2}
\end{equation}
for some integers $X$ and $Y$.
From this, we obtain
\begin{equation}
F_{2n+8}Y^{2}-9F_{2n+4}X^{2}=9(F_{2n+8}-9F_{2n+4}).
\end{equation}
Since $3\mid 9F_{2n+4}$, $F_{2n+8}<9F_{2n+4}<\frac{40}{9}F_{2n+8}$, and $9F_{2n+4}F_{2n+8}+9=(3F_{2n+6})^{2}$, Lemma \ref{Filipin} tells us that all solutions $Y\sqrt{F_{2n+8}}+X\sqrt{9F_{2n+4}}$ of this equation are given by
\[
Y\sqrt{F_{2n+8}}+3X\sqrt{F_{2n+4}}=(\pm 3\sqrt{F_{2n+8}}+9\sqrt{F_{2n+4}})(F_{2n+6}+\sqrt{F_{2n+4}F_{2n+8}})^{j}.
\]
Define the sequences $\{V_{j}\}_{j=1}^{\infty}$ and $\{U_{j}\}_{j=1}^{\infty}$ by 
\[
V_{j}+U_{j}\sqrt{F_{2n+4}F_{2n+8}}:=(F_{2n+6}+\sqrt{F_{2n+4}F_{2n+8}})^{j}.
\]
This gives
\begin{align*}
X&=X_{j}=3V_{j}\pm F_{2n+8}U_{j},\text{ and   }\\
Y&=Y_{j}=\pm V_{j}+9F_{2n+4}U_{j}.
\end{align*}
Substituting these expressions into equations \eqref{eq:XY}, we obtain
\begin{align*}
F_{2n+8}F_{k}+9&=X^{2}=(3V_{j}\pm F_{2n+8}U_{j})^{2}\text{, and}\\
9F_{2n+4}F_{k}+9&=Y^{2}=(\pm 3V_{j}+9F_{2n+4}U_{j})^{2}.
\end{align*}
This gives alternative expressions for $F_{k}$
\[
F_{k}=\frac{9V_{j}^{2}-9}{F_{2n+8}}+F_{2n+8}U_{j}^{2}\pm 6U_{j}V_{j} \text{ and    } F_{k}=\frac{V_{j}^{2}-1}{F_{2n+4}}+9U_{j}
^{2}F_{2n+4}\pm 6U_{j}V_{j},
\]
which together give
\begin{equation}\label{eq:Fk}
F_{k}=\pm 6U_{j}V_{j}+U_{j}^{2}(F_{2n+8}+9F_{2n+4}).
\end{equation}
We call this sequence
\begin{equation}\label{eq:Cj}
C_{j}^{\pm}:=\pm 6U_{j}V_{j}+U_{j}^{2}(F_{2n+8}+9F_{2n+4}),
\end{equation}
and aim to solve $F_{k}=C_{j}^{\pm}$ for positive integer $j$ and $k$. Note here that 
\begin{align*}
C_{1}^{-}&=-6U_{1}V_{1}+U_{1}^{2}(F_{2n+8}+9F_{2n+4})=-6F_{2n+6}+F_{2n+8}+9F_{2n+4}\\
&=F_{2n+5}-4F_{2n+6}+9F_{2n+4}=2F_{2n+4}-3F_{2n+3}=F_{2n}
\end{align*}
and that 
\begin{align*}
C_{1}^{+}&=6F_{2n+6}+F_{2n+8}+9F_{2n+4}=12F_{2n+6}+F_{2n}\\
&=F_{2n+10}+F_{2n+7}+2F_{2n+5}+6F_{2n+4}+F_{2n}\\
&=F_{2n+11}+F_{2n+5}+F_{2n+3}+2F_{2n}
\end{align*}
and therefore $F_{2n+11}<C_{1}^{+}<F_{2n+12}$. Hence we assume $j\geq 2$. In addition, because $X_{1}^{+}>X_{1}^{-}>0$ and
\begin{align*}
X_{j+1}^{\pm}&=3F_{2n+6}(3V_{j}\pm F_{2n+8}U_{j})+F_{2n+8}(\pm V_{j}+9F_{2n+4}U_{j})\\
&=(9F_{2n+6}\pm F_{2n+8})V_{j}+3F_{2n+8}(3F_{2n+4}\pm F_{2n+6})U_{j}\\
&>3V_{j}\pm F_{2n+8}U_{j}=X_{j}^{\pm},
\end{align*}
and since we're looking for solutions such that $F_{k}=\dfrac{(X_{j})^{2}-9}{F_{2n+8}}$, it may be assumed that $k>2n$ when $j\geq 2$.

\vspace*{5mm}

Define $\beta_{n}:=F_{2n+6}+\sqrt{F_{2n+6}^{2}-1}$. Then
\[
V_{j}=\frac{\beta_{n}^{j}+\beta_{n}^{-j}}{2},\text{ and } U_{j}=\frac{\beta_{n}^{j}-\beta_{n}^{-j}}{2\sqrt{F_{2n+4}F_{2n+8}}}.
\]
So $C_{j}^{\pm}$ can be rewritten as
\[
C_{j}^{\pm}=\pm 6\frac{\beta_{n}^{2j}-\beta_{n}^{-2j}}{4\sqrt{F_{2n+6}^{2}-1}}+(F_{2n+8}+9F_{2n+4})\cdot\frac{\beta_{n}^{2j}+
\beta_{n}^{-2j}-2}{4(F_{2n+6}^{2}-1)}
\]
\begin{gather*}
=\frac{\pm 6\beta_{n}^{2j}}{4\sqrt{F_{2n+6}^{2}-1}}+\frac{(F_{2n+8}+9F_{2n+4})\beta_{n}^{2j}}{4(F_{2n+6}^{2}-1)}-\frac{\pm 
6\beta_{n}^{-2j}}{4\sqrt{F_{2n+6}^{2}-1}}\\
+\frac{(F_{2n+8}+9F_{2n+4})\beta_{n}^{-2j}}{4(F_{2n+6}^{2}-1)}-\frac{(F_{2n+8}+9F_{2n+4})}{2(F_{2n+6}^{2}-1)}.
\end{gather*}
Define the sequence $\gamma_{n}^{\pm}$ by
\[
\gamma_{n}^{\pm}:=\frac{\pm 6}{4\sqrt{F_{2n+6}^{2}-1}}+\frac{(F_{2n+8}+9F_{2n+4})}{4(F_{2n+4}F_{2n+8})}.
\]
We have
\[
C_{j}^{\pm}=\beta_{n}^{2j}\gamma_{n}^{\pm}-\frac{(F_{2n+8}+9F_{2n+4})}{2(F_{2n+6}^{2}-1)}+\beta_{n}^{-2j}\gamma_{n}
^{\mp},
\]
and the problem can be expressed as finding solutions $j\geq 2$ and $k>2n$ to the equation
\begin{equation}\label{eq:Main Equation}
\beta_{n}^{2j}\gamma_{n}^{\pm}-\frac{(F_{2n+8}+9F_{2n+4})}{2(F_{2n+6}^{2}-1)}+\beta_{n}^{-2j}\gamma_{n}^{\mp}
=\frac{\alpha^{k}-\overline{\alpha}^{k}}{\sqrt{5}}.
\end{equation}



\section{A Linear Form in Three Logarithms}\label{D(9)3Logarithms}

We begin by finding bounds for $\gamma_{n}^{\pm}$.
\begin{lemma}\label{gamma bounds} $\gamma_{n}^{\pm}$ satisfy the following:
\begin{align*}
0.011\alpha^{-2n-4}&<\gamma_{n}^{-}<0.013\alpha^{-2n-4}\\
2.574\alpha^{-2n-4}&<\gamma_{n}^{+}<2.585\alpha^{-2n-4}.
\end{align*}
\end{lemma}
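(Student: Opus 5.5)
We write $\gamma_n^{\pm}=\frac{1}{4}\bigl(T\pm S\bigr)$, where
\[
T=\frac{F_{2n+8}+9F_{2n+4}}{F_{2n+4}F_{2n+8}},\qquad S=\frac{6}{\sqrt{F_{2n+6}^{2}-1}}.
\]
We note that $F_{2n+4}F_{2n+8}=F_{2n+6}^2-1$, so both terms share the denominator scale $F_{2n+6}^2$. The plan is to express everything through Binet's formula in terms of $\alpha^{2n}$, with error terms of size $\alpha^{-4n}$ relative to the main terms. We then bound $\alpha^{2n+4}\gamma_n^{\pm}$ above and below by explicit constants valid for all $n\ge 1$.

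\textbf{Step 1: the main terms.} Write $F_m=(\alpha^m-\overline{\alpha}^m)/\sqrt5$. For even $m$ this gives $\alpha^m/\sqrt5-\alpha^{-m}/\sqrt5$, which lies in $(\alpha^m/\sqrt5\,(1-\alpha^{-2m}),\ \alpha^m/\sqrt5)$. Then
\[
T=\frac{1}{F_{2n+4}}+\frac{9}{F_{2n+8}}\approx \sqrt5\,\alpha^{-2n-4}\bigl(1+9\alpha^{-4}\bigr),
\]
and
\[
S\approx \frac{6\sqrt5}{\alpha^{2n+6}}=6\sqrt5\,\alpha^{-2}\alpha^{-2n-4}.
\]
Hence $\alpha^{2n+4}\gamma_n^{\pm}$ tends to
\[
\frac{\sqrt5}{4}\bigl(1+9\alpha^{-4}\pm 6\alpha^{-2}\bigr)=\frac{\sqrt5}{4}\bigl(1\pm3\alpha^{-2}\bigr)^2 .
\]
Numerically, $\alpha^{-2}\approx0.381966$, so $1-3\alpha^{-2}\approx-0.1459$ and its square is about $0.02129$. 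The $-$ limit is therefore about $0.0119$. For the $+$ sign, $(2.1459)^2\approx4.6049$, giving a limit of about $2.574$. Both limits sit inside the claimed intervals, at the lower end for $\gamma^+$.

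\textbf{Step 2: monotonicity and error terms.} The sequences $\alpha^{2n+4}/F_{2n+4}$ and $\alpha^{2n+4}/F_{2n+8}$ decrease to their limits from above, since $1/F_m=\sqrt5\alpha^{-m}/(1-\alpha^{-2m})$. For $S$, the quantity $F_{2n+6}^2-1<F_{2n+6}^2<\alpha^{4n+12}/5$, so $S>6\sqrt5\alpha^{-2n-6}$, and $S$ also decreases to its limit from above. So $\alpha^{2n+4}T$ and $\alpha^{2n+4}S$ are each at least their limits and decrease in $n$.

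\textbf{Step 3: bounding $\gamma_n^+$.} Here $\alpha^{2n+4}\gamma_n^+$ is decreasing. Its infimum is the limit $\approx2.5741>2.574$, which gives the lower bound. The upper bound follows by evaluating at $n=1$, or at the first $n$ beyond which the excess $O(\alpha^{-4n})$ is below $0.011$; the few small $n$ are checked directly.

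\textbf{Step 4: bounding $\gamma_n^-$, the main obstacle.} For $\gamma^-$ the two sequences partially cancel, so monotonicity is not automatic. I would write
\[
\alpha^{2n+4}\gamma_n^-=\ell+\tfrac14\bigl(\epsilon_T(n)-\epsilon_S(n)\bigr),
\]
where $\ell\approx0.0119$ is the limit and $\epsilon_T,\epsilon_S$ are explicit $O(\alpha^{-4n})$ corrections. I would bound $|\epsilon_T-\epsilon_S|\le c\,\alpha^{-4n}$ with a small explicit constant $c$ obtained from the expansions $1/(1-x)\le1+2x$ and $1/\sqrt{1-x}\le1+x$. This gives $|\alpha^{2n+4}\gamma_n^- - \ell|<0.001$ for $n\ge n_0$. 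The remaining small values of $n$ are checked numerically; for example, at $n=1$,
\[
\gamma_1^-=\frac14\Bigl(\frac{55+72}{8\cdot55}-\frac{6}{\sqrt{440}}\Bigr),
\]
to be computed directly. The care needed here is that the margin is tight, so the precise value of $\ell$ and of the small-$n$ cases must be computed to four digits.
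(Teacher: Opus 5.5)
Your argument is correct in outline, but it handles the delicate case differently from the paper. The paper uses $F_{2n+4}F_{2n+8}=F_{2n+6}^2-1$ to see that each $\gamma_n^{\pm}$ is itself a perfect square. In your notation, $T\pm S=\bigl(F_{2n+4}^{-1/2}\pm 3F_{2n+8}^{-1/2}\bigr)^2$.

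The paper then bounds the square root uniformly for $n\ge 1$. After factoring out $5^{1/4}\alpha^{-n-2}/2$, the two summands are $3\alpha^{-2}(1-\alpha^{-4n-16})^{-1/2}$ and $(1-\alpha^{-4n-8})^{-1/2}$. Each lies in a very short explicit interval. So in the $-$ case the difference sits in a short interval around $3\alpha^{-2}-1\approx 0.146$, bounded away from $0$. Squaring then gives both bounds at once, with no case analysis.

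You noticed the perfect square only in the limit and worked with $T\pm S$ directly. For $\gamma^+$ your monotonicity argument is clean: the limit gives the lower bound, and $n=1$ (value about $2.578$) gives the upper bound.

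For $\gamma^-$, however, estimating $\epsilon_T-\epsilon_S$ term by term throws away the fact that the first-order errors of $T$ and $S$ nearly cancel. Their difference carries the factor $3\alpha^{-2}-1$. That loss is exactly why you need a separate computation at $n=1$. The computation works ($\alpha^{6}\gamma_1^-\approx 0.01165$), and a bound of the kind you describe already suffices for $n\ge 2$, so your proof goes through. The square-root identity simply removes your ``main obstacle'' altogether.

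There are two numerical points to fix. First, $\ell^-=(47\sqrt5-105)/8\approx 0.011899$, so a tolerance of $0.001$ only yields $\alpha^{2n+4}\gamma_n^->0.0109$. You need a tolerance below $\ell^- - 0.011\approx 0.0009$. Second, the margin for the lower bound on $\gamma^+$ is only about $2\cdot 10^{-4}$. State that limit exactly as $(83\sqrt5-165)/8\approx 2.57421$ rather than $\approx 2.5741$.
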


\proof
We have
\begin{align*}
\sqrt{\gamma_{n}^{\pm}}&=\frac{3}{2\sqrt{F_{2n+8}}}\pm\frac{1}{2\sqrt{F_{2n+4}}}\\
&=\frac{3}{2\sqrt{(\alpha^{2n+8}-\alpha^{-2n-8})/\sqrt{5}}}\pm\frac{1}{2\sqrt{(\alpha^{2n+4}-\alpha^{-2n-4})/\sqrt{5}}}\\
&=\frac{5^{1/4}\alpha^{-n-2}}{2}\Bigg(\frac{3}{\alpha^{2}\sqrt{(1-1/\alpha^{4n+16}})}\pm\frac{1}{\sqrt{(1-1/\alpha^{2n+8}})}\Bigg)
\end{align*}
As a result of the Taylor series of $(1-x)^{-1/2}$, we have for $0<x<1$
\[
1+\frac{1}{2}x<\frac{1}{\sqrt{1-x}}=1+\frac{1}{2}x+\frac{3}{8}x^{2}+\dots <1+\frac{x}{2}\Big(\frac{1}{1-x}\Big)
\]
so
\[
\frac{3}{\alpha^2}\Big(1+\frac{1}{2}\alpha^{-4n-16}\Big)<\frac{3}{\alpha^{2}\sqrt{(1-1/\alpha^{4n+16}})}<\frac{3}{\alpha^2}\Big(1+
\frac{\alpha^{-4n-16}}{2(1-\alpha^{-4n-16})}\Big),
\]
hence
\[
1.1458980<\frac{3}{\alpha^{2}}<\frac{3}{\alpha^{2}\sqrt{(1-1/\alpha^{4n+16}})}<1.14599721
\]
and similarly
\[
1<\frac{1}{\sqrt{(1-1/\alpha^{2n+8}})}<1.0041.
\]
We then obtain bounds for $\sqrt{\gamma_{n}^{\pm}}$
\[
0.141798<\frac{2}{5^{1/4}\alpha^{-n-2}}\sqrt{\gamma_{n}^{-}}<0.1499721
\]
and
\[
2.145898<\frac{2}{5^{1/4}\alpha^{-n-2}}\sqrt{\gamma_{n}^{+}}<2.15001,
\]
and finally the following bounds on $\gamma_{n}^{\pm}$:
\begin{align*}
0.011\alpha^{-2n-4}&<\gamma_{n}^{-}<0.013\alpha^{-2n-4}\\
2.574\alpha^{-2n-4}&<\gamma_{n}^{+}<2.585\alpha^{-2n-4}.
\end{align*}
\qed
%

Define a linear form in three logarithms, $\Lambda$ by
\[
\Lambda:=2j\log{\beta_{n}}-k\log{\alpha}+\log{(\sqrt{5}\gamma_{n}^{\pm})}.
\]
\begin{lemma}\label{4.2}
$0<\Lambda<1162\beta_{n}^{-2j}$ for $j\geq 2$.
\end{lemma}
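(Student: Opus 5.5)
We start from the main equation \eqref{eq:Main Equation} and rewrite it in the form
\[
\beta_{n}^{2j}\gamma_{n}^{\pm}-\frac{\alpha^{k}}{\sqrt{5}}=\frac{F_{2n+8}+9F_{2n+4}}{2(F_{2n+6}^{2}-1)}-\beta_{n}^{-2j}\gamma_{n}^{\mp}-\frac{\overline{\alpha}^{k}}{\sqrt{5}}.
\]
The right-hand side is a difference of small quantities. The key observation is that the first term dominates: it is of order $\alpha^{-2n-4}$ times a constant, while $\beta_{n}^{-2j}\gamma_n^{\mp}$ is at most of order $\alpha^{-2n-4}\beta_n^{-4}$ by Lemma~\ref{gamma bounds} with $j\ge 2$. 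The term $|\overline{\alpha}^{k}|/\sqrt5=\alpha^{-k}/\sqrt5$ is smaller than $\alpha^{-2n}/\sqrt5$ since $k>2n$.

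\textbf{Step 1: positivity.} We bound the first term below. We have $F_{2n+8}+9F_{2n+4}\approx(\alpha^{4}+9)\alpha^{2n+4}/\sqrt5$ and $F_{2n+6}^2-1\approx \alpha^{4n+12}/5$, so the first term is roughly $\tfrac{\sqrt5(\alpha^4+9)}{2}\alpha^{-2n-8}\approx 17.6\,\alpha^{-2n-8}\approx 0.77\alpha^{-2n}$. This comfortably beats $\alpha^{-k}/\sqrt5\le 0.45\,\alpha^{-2n-1}$ and the $\beta_n^{-2j}$ term, so the right-hand side is positive.

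Dividing by $\alpha^{k}/\sqrt5$ then gives
\[
\sqrt5\,\gamma_n^{\pm}\beta_n^{2j}\alpha^{-k}-1>0,
\]
and hence $\Lambda=\log(\sqrt5\gamma_n^{\pm}\beta_n^{2j}\alpha^{-k})>0$.

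\textbf{Step 2: upper bound.} Write $x=\sqrt5\gamma_n^{\pm}\beta_n^{2j}\alpha^{-k}-1$, so that $0<\Lambda=\log(1+x)<x$. From Step 1,
\[
x<\frac{\sqrt5}{\alpha^{k}}\cdot\frac{F_{2n+8}+9F_{2n+4}}{2(F_{2n+6}^2-1)}.
\]
To convert $\alpha^{-k}$ into $\beta_n^{-2j}$, we use the equation itself. Since the right-hand side is less than $1$, it gives $\alpha^{k}/\sqrt5> \beta_n^{2j}\gamma_n^{\pm}-1$, and therefore
\[
\alpha^{k}>\sqrt5\,\beta_n^{2j}\gamma_n^{\pm}\,(1-\epsilon),
\]
with $\epsilon$ small. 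The worst case is the minus sign, where $\gamma_n^{-}>0.011\alpha^{-2n-4}$. This yields
\[
x<\frac{C\,\alpha^{-2n}}{\gamma_n^{-}\beta_n^{2j}}\lesssim \frac{0.77}{0.011}\,\alpha^{4}\,\beta_n^{-2j}.
\]
That is about $70\cdot 6.85\approx 480\,\beta_n^{-2j}$, adjusted by the $(1-\epsilon)$ correction and the exact constants for small $n$. Carried out carefully with $n\ge1$, this should land below $1162$.

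\textbf{Main obstacle.} The delicate part is the numerics. We must get explicit bounds such as $F_{2n+6}^2-1\ge c\,\alpha^{4n+12}$ valid for all $n\ge1$, and we must control $\epsilon$, for which $\beta_n^{2j}\gamma_n^-\ge \beta_n^4\cdot 0.011\alpha^{-2n-4}$ is large since $\beta_n\approx 2F_{2n+6}$. The main risk is the minus case with the tiny $\gamma_n^-$. I would first do the estimate for $n=1$ directly, since the error terms are largest there, and then show the constant decreases in $n$.
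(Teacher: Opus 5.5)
Your argument is close to the paper's and works once two errors are repaired. It differs from the paper's proof in both halves.

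\textbf{Comparison.} For positivity, the paper argues by contradiction. If $\beta_n^{2j}\gamma_n^{\pm}\le\alpha^k/\sqrt5$, then $\alpha^{-k}/\sqrt5\le\beta_n^{-2j}/(5\gamma_n^{\pm})$, which forces $\beta_n^{2j}<F_{2n+4}\bigl(\gamma_n^{+}+1/(5\gamma_n^{-})\bigr)$. This is impossible because $\beta_n^{2j}>(F_{2n+4}F_{2n+8})^j$ with $j\ge2$, and the argument never uses $k>2n$. You instead show the right-hand side is positive directly. That is more transparent but needs $k>2n$, which the setup does provide for $j\ge 2$.

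For the upper bound, the paper normalises by $\beta_n^{2j}\gamma_n^{\pm}$, obtaining $|e^{-\Lambda}-1|<581\beta_n^{-2j}$, and then pays a factor $2$. You normalise by $\alpha^k/\sqrt5$. This costs the extra step $\alpha^k>\sqrt5\beta_n^{2j}\gamma_n^{\pm}(1-\epsilon)$, but lets you use $\log(1+x)<x$ with no factor $2$.

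\textbf{First error: the opening inequality of Step 2.} The bound $x<\frac{\sqrt5}{\alpha^k}\cdot\frac{F_{2n+8}+9F_{2n+4}}{2(F_{2n+6}^2-1)}$ is false for odd $k$. In that case $-\overline{\alpha}^k/\sqrt5=+\alpha^{-k}/\sqrt5\approx 1/(5\beta_n^{2j}\gamma_n^{\pm})$. This exceeds the discarded $\beta_n^{-2j}\gamma_n^{\mp}$, because $5\gamma_n^{+}\gamma_n^{-}<1$. You must keep this term, as the paper does, bounding it by $\alpha^{-2n-1}/\sqrt5=(\alpha^3/\sqrt5)\alpha^{-2n-4}$.

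\textbf{Second error: the size of the main term.} It is about $0.377\alpha^{-2n}$, not $0.77\alpha^{-2n}$, since $\alpha^8\approx 47$. Positivity still survives. Since $F_m<\alpha^m/\sqrt5$ for even $m$, we get $\frac{F_{2n+8}+9F_{2n+4}}{2(F_{2n+6}^2-1)}>\frac{\sqrt5}{2}(\alpha^{-4}+9\alpha^{-8})\alpha^{-2n}>0.377\alpha^{-2n}$. The conjugate term has the unfavourable sign only for even $k$, and then $k\ge 2n+2$, so it is at most $0.171\alpha^{-2n}$.

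\textbf{Finishing the computation.} With both repairs, the numerator is below $(2.59+1.90)\alpha^{-2n-4}$ for all $n\ge1$. Combined with $\gamma_n^{\pm}>0.011\alpha^{-2n-4}$ and $\epsilon<10^{-3}$, this gives $\Lambda<x<410\beta_n^{-2j}$, well inside $1162$. So instead of saying the constant ``should land below'' $1162$, you can simply carry out this computation.
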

\proof
From \eqref{eq:Main Equation},
\[
\beta_{n}^{2j}\gamma_{n}^{\pm}-\frac{\alpha^{k}}{\sqrt{5}}=\frac{(F_{2n+8}+9F_{2n+4})}{2(F_{2n+6}^{2}-1)}-
\frac{\overline{\alpha}^{k}}{\sqrt{5}}-\beta_{n}^{-2j}\gamma_{n}^{\mp}
\]
\noindent $\Lambda=\log{\sqrt{5}\gamma_{n}^{\pm}\beta_{n}^{2j}\alpha^{-k}}>0$ if and only if $\sqrt{5}\gamma_{n}^{\pm}\beta_{n}^{2j}\alpha^{-k}>1$. Therefore in order to show that $\Lambda>0$ we will assume, for a contradiction, that $\beta_{n}^{2j}\gamma_{n}^{+}\leq\dfrac{\alpha^{k}}{\sqrt{5}}$. This would mean that
\[
\frac{\sqrt{5}}{\alpha^{k}}\leq\frac{\beta_{n}^{-2j}}{\gamma_{n}^{\pm}}\leq\frac{\beta_{n}^{-2j}}{\gamma_{n}^{-}},
\]
and because $\dfrac{1}{F_{2n+4}}<\dfrac{9}{F_{2n+8}}$, and by \eqref{eq:Main Equation} and our assumption, this gives the following inequality
\begin{gather*}
\frac{1}{F_{2n+4}}<\frac{1}{2F_{2n+4}}+\frac{9}{2F_{2n+8}}=\frac{F_{2n+8}+9F_{2n+4}}{2(F_{2n+6}^2-1)}\\
\leq \beta_{n}^{-2j}\gamma_{n}^{\pm}+\frac{\overline{\alpha}^{k}}{\sqrt{5}}<\beta_{n}^{-2j}\Big(\gamma_{n}^{+}+\frac{1}
{5\gamma_{n}^{-}}\Big),
\end{gather*}
which we apply below along with the bounds for $\gamma_{n}^{\pm}$ obtained in \ref{gamma bounds}.
\begin{align*}
F_{2n+4}^{j}F_{2n+8}^{j}&=(F_{2n+6}^{2}-1)^{j}<\Big(2F_{2n+6}^{2}-1+2F_{2n+6}\sqrt{F_{2n+6}^{2}-1}\Big)^{j}=\beta_{n}^{2j}\\
&<F_{2n+4}\Big(\gamma_{n}^{+}+\frac{1}{5\gamma_{n}^{-}}\Big)<F_{2n+4}(2.585\alpha^{-2n-4}+18.2\alpha^{2n+4})
\end{align*}
and $F_{2n+4}^{j-1}F_{2n+8}^{j}<2.585\alpha^{-2n-4}+18.182\alpha^{2n+4}$ is only true if $j<2$, which is a contradiction. Hence $\beta_{n}^{2j}\gamma_{n}^{+}>\dfrac{\alpha^{k}}{\sqrt{5}}$ and so $\Lambda>0$.
Now we have
\begin{align*}
\Big|\alpha^{k}5^{-1/2}\beta_{n}^{-2j}(\gamma_{n}^{\pm})^{-1}-1\Big|&<\frac{1}{\beta_{n}^{2j}\gamma_{n}^{\pm}}
\Big(\frac{F_{2n+8}+9F_{2n+4}}{2(F_{2n+6}^{2}-1)}+\frac{1}{\sqrt{5}\alpha^{k}} \Big)\\
&<\frac{1}{\beta_{n}^{2j}\gamma_{n}^{\pm}}
\Big(\frac{2}{F_{2n+4}}+\frac{1}{\sqrt{5}\alpha^{2n+1}} \Big)\\
&<\frac{581}{\beta_{n}^{2j}}<\frac{1}{2},
\end{align*}
so by the fact that 
\begin{equation}\label{eq:e inequality}
|e^{\Lambda}-1|<\frac{1}{2}\text{ implies that } |\Lambda |<2|e^{\Lambda}-1|
\end{equation}
we must have that $\Lambda <1162\beta_{n}^{-2j}$.\qed
\vspace*{5mm}

Using these two lemmas we prove the following proposition
\begin{proposition}\label{4.3}
If equation \eqref{eq:Fk} has a positive integer solution $(j,k)$ with $j>1$ then
\[
j<1.15\cdot 10^{12}(2n+7)\log{(78j(2n+7))}.
\]
\end{proposition}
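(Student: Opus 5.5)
Our plan is to apply Matveev's bound (\Cref{Matveev}) with $N=3$ to
\[
\Lambda=2j\log\beta_n-k\log\alpha+\log(\sqrt5\gamma_n^{\pm}),
\]
and compare it with the upper bound $\Lambda<1162\beta_n^{-2j}$ from Lemma \ref{4.2}. We take $\alpha_1=\beta_n$, $\alpha_2=\alpha$, $\alpha_3=\sqrt5\gamma_n^{\pm}$ with $b_1=2j$, $b_2=-k$, $b_3=1$.

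The first step is multiplicative independence. Observe that $\beta_n\in\mathbb{Q}(\sqrt{F_{2n+4}F_{2n+8}})$ and $\alpha\in\mathbb{Q}(\sqrt5)$. By \Cref{first nosquare}, $F_{2n+4}F_{2n+8}$ is neither a square nor $5$ times a square, so the two quadratic fields are distinct. Also $\sqrt5\gamma_n^{\pm}$ lies in their compositum, which has degree $D=4$; all three numbers are totally real. A multiplicative relation $\beta_n^{a}\alpha^{b}(\sqrt5\gamma_n^\pm)^{c}=1$ can be ruled out by applying the nontrivial automorphism fixing $\mathbb{Q}(\sqrt5)$ and taking norms, together with the fact that $\beta_n$ and $\alpha$ are units while $\gamma_n^{\pm}$ is not a unit (its norm is a non-integral rational, with denominator involving $F_{2n+4}F_{2n+8}$). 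Equivalently, one can argue that a relation would force $\Lambda=0$, contradicting $\Lambda>0$ from Lemma \ref{4.2}, once it is also shown that $\beta_n$ and $\alpha$ are independent. I expect this to be the fiddliest conceptual step; the rest is bookkeeping.

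Next we compute the heights. Since $\beta_n$ is a quadratic unit with conjugate $\beta_n^{-1}$, we get $h(\beta_n)=\tfrac12\log\beta_n$, so $A_1=4h(\beta_n)=2\log\beta_n$. Because $\beta_n<2F_{2n+6}<\alpha^{2n+6}$, this gives $A_1<2(2n+6)\log\alpha<(2n+7)$, up to constants. For $\alpha$ we have $h(\alpha)=\tfrac12\log\alpha$, so $A_2=2\log\alpha<1$. For $\alpha_3=\sqrt5\gamma_n^\pm$, we determine its minimal polynomial: $\gamma_n^{\pm}$ has conjugates obtained by flipping the sign of $\sqrt{F_{2n+6}^2-1}$, and multiplying by $\sqrt5$ doubles the degree. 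Bounding the leading coefficient by roughly $(F_{2n+4}F_{2n+8})^2$ and the conjugates using Lemma \ref{gamma bounds} (each is at most a constant) yields $h(\alpha_3)\le \log(F_{2n+4}F_{2n+8})+O(1)$. Hence $A_3\le c(2n+7)$ with a modest constant, say $A_3<4(2n+7)$.

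For $E$ we need a bound on $k$ in terms of $j$. From \eqref{eq:Main Equation}, $\alpha^k\approx\sqrt5\gamma_n^\pm\beta_n^{2j}$, so $k<2j\log\beta_n/\log\alpha+1<2j(2n+7)$. The dominant ratio in $E$ is then $|b_1|A_1/A_3$ or $|b_2|A_2/A_3$, and either way $E\le c'j$. This makes $W_0=\log(1.5eE\cdot4\log(4e))\le\log(78j(2n+7))$ after choosing the normalisation, which explains the shape of the logarithm in the statement.

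Finally, Matveev gives $\log\Lambda>-C(3)C_0W_0\cdot16\cdot A_1A_2A_3$. We combine this with $\log\Lambda<\log1162-2j\log\beta_n$, and divide by $2\log\beta_n$. Since $A_1$ is itself proportional to $\log\beta_n$, it cancels, leaving
\[
j<c''(2n+7)\log(78j(2n+7)).
\]
Evaluating $C(3)$, $C_0$ with $N=3$, $D=4$ and the constants above should give $c''\approx1.15\cdot10^{12}$. The main obstacle is keeping the height of $\sqrt5\gamma_n^\pm$ linear in $2n+7$ with a small constant, which requires an explicit minimal polynomial. If that proves messy, we fall back on the bound $h(xy)\le h(x)+h(y)$ with $h(\sqrt5)=\tfrac12\log5$, together with a direct bound on $h(\gamma_n^\pm)$ via its quadratic minimal polynomial over $\mathbb{Q}$, whose coefficients involve $F_{2n+4}F_{2n+8}$.
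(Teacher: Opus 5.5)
Your outline matches the paper's proof. You use the same data in Lemma~\ref{Matveev} ($N=3$, $D=4$, $\alpha_1=\beta_n$, $\alpha_2=\alpha$, $\alpha_3=\sqrt5\gamma_n^{\pm}$, $(b_1,b_2,b_3)=(2j,-k,1)$). Independence comes from Lemma~\ref{first nosquare} plus the fact that $\sqrt5\gamma_n^{\pm}$ does not have norm $\pm1$. You take $A_1=2\log\beta_n$ and $A_2=2\log\alpha$, and $\log\beta_n$ cancels against the upper bound of Lemma~\ref{4.2}. Your estimate $E=O(j)$ is sharper than the paper's $E\le 2j(2n+7)$: with $A_3=8(2n+7)\log\alpha$ and $k<2j(2n+6)$ one gets $E\le\max\{1,j/2\}$. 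Either bound gives $W_0<\log(78j(2n+7))$. Two points need correcting.

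First, the statement is an explicit numerical bound, and your rounding $A_3<4(2n+7)$ loses too much to reach it. After dividing by $2\log\beta_n$ you have $j<\log 1162/(2\log\beta_n)+C(3)C_0D^2A_2A_3W_0$. With $A_3=4(2n+7)$, the coefficient of $(2n+7)W_0$ is $6.45\cdot10^{8}\cdot30\cdot16\cdot2\log\alpha\cdot4\approx1.19\cdot10^{12}$, which exceeds $1.15\cdot10^{12}$. You must keep the value your fallback computation actually gives, namely $h(\sqrt5\gamma_n^{\pm})\le\frac12\log5+\log(4F_{2n+4}F_{2n+8})<(4n+14)\log\alpha$, so $A_3=8(2n+7)\log\alpha\approx3.85(2n+7)$. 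The coefficient is then $6.45\cdot10^{8}\cdot30\cdot256(\log\alpha)^2\approx1.147\cdot10^{12}$. The additive term $\log1162/(2\log\beta_n)<1$ (since $\beta_n\ge21+\sqrt{440}$) is absorbed by the margin up to $1.15\cdot10^{12}$.

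Second, your ``equivalently'' remark on independence is wrong. A relation $\beta_n^a\alpha^b(\sqrt5\gamma_n^{\pm})^c=1$ only lets you eliminate one logarithm from $\Lambda$. It does not make this particular $\Lambda$, with coefficients $(2j,-k,1)$, vanish, so $\Lambda>0$ says nothing about independence. Drop it and use the automorphism argument, which does work:
\begin{itemize}
\item Lemma~\ref{first nosquare} gives $\sigma\in\mathrm{Gal}(\mathbb{Q}(\sqrt5,\sqrt{F_{2n+4}F_{2n+8}})/\mathbb{Q}(\sqrt5))$ with $\sigma(\beta_n)=\beta_n^{-1}$ and $\sigma(\gamma_n^{\pm})=\gamma_n^{\mp}$.
\item Multiplying the relation by its image under $\sigma$ gives $\alpha^{2b}(5\gamma_n^+\gamma_n^-)^c=1$.
\item Here $5\gamma_n^+\gamma_n^-=5(9F_{2n+4}-F_{2n+8})^2/(16F_{2n+4}^2F_{2n+8}^2)$ is a rational number in $(0,1)$, while $\alpha^{2b}\notin\mathbb{Q}$ for $b\ne0$.
\item Hence $b=c=0$, and then $a=0$.
\end{itemize}
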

%
%
%
%
%
%
%
%

In order to prove this proposition, we will apply Lemma \ref{Matveev} to the linear form in three logarithms $\Lambda$ as defined above.
\[
\Lambda:=2j\log{\beta_{n}}-k\log{\alpha}+\log{(\sqrt{5}\gamma_{n}^{\pm})}
\]
We take
\begin{gather*}
N=3,\quad D=4,\quad b_{1}=2j,\quad b_{2}=-k,\quad b_{3}=1,\\
\alpha_{1}=\beta_{n},\quad \alpha_{2}=\alpha,\quad \alpha_{3}=\sqrt{5}\gamma_{n}^{\pm}.
\end{gather*}
Lemma \ref{Matveev} calls for $\alpha_{1},\alpha_{2},\alpha_{3}$ to be multiplicatively independent (i.e. that their logarithms be linearly independent). We have that $\alpha_{2}\in\mathbb{Q}(\sqrt{5})$ and $\alpha_{1},\alpha_{3}^{2}\in\mathbb{Q}(\sqrt{F_{2n+4}F_{2n+8}})$, and by Lemma \ref{first nosquare}, $F_{2n+4}F_{2n+8}$ is neither a square nor $5$ times a square. Therefore, writing $F_{2n+4}F_{2n+8}=du^{2}$ for $u,d\in\mathbb{Z}$, $5\neq d$ square-free, since no non-zero power of $\alpha_{2}$ can be in $\mathbb{Q}(\sqrt{d})$, if $\alpha_{1},\alpha_{2},\alpha_{3}$ are multiplicatively dependent we must have that $\alpha_{1}$ and $\alpha_{3}^{2}$ are multiplicatively dependent. Since $\alpha_{1}$ is a unit in $\mathbb{Q}(\sqrt{d})$, we must have that $\alpha_{3}^{2}=5(\gamma_{n}^{\pm})^{2}$ is a unit, but the norm of $5(\gamma_{n}^{\pm})^{2}$ is
\[
25(\gamma_{n}^{+}\gamma_{n}^{-})^{2}=25\Big(\frac{9F_{2n+4}-F_{2n+8}}{4F_{2n+4}F_{2n+8}}\Big)^{4}<1,
\]
so this is never an integer for any $n$, therefore $\alpha_{3}^{2}$ is not a unit for any $n$.

The absolute logarithmic heights for $\alpha_{1}$ and $\alpha_{2}$ are
\[
h(\alpha_{1})=h(\beta_{n})=\frac{1}{2}\log{\beta_{n}},\quad h(\alpha_{2})=h(\alpha)=\frac{1}{2}\log{\alpha}.
\]
For $\alpha_{3}$, since
\[
(x-\gamma_{n}^{+})(x-\gamma_{n}^{-})=x^{2}-2\Big(\frac{F_{2n+8}+9F_{2n+4}}{4(F_{2n+4}F_{2n+8})}\Big)x+
\Big(\frac{9F_{2n+4}-F_{2n+8}}{4F_{2n+4}F_{2n+8}}\Big)^{2},
\]
clearing denominators we get the minimal polynomial
\[
16F_{2n+4}^{2}F_{2n+8}^{2}x^{2}-8(F_{2n+8}^{2}F_{2n+4}+9F_{2n+4}^{2}F_{2n+8})x+(9F_{2n+4}-F_{2n+8})^{2},
\]
and since $|\gamma_{n}^{\pm}|\leq |\gamma_{n}^{+}|< 2.585\alpha^{-6}<1$, and $F_{\lambda}<\alpha^{\lambda}/\sqrt{5}$ for positive even $\lambda$ we have
\[
h(\gamma_{n}^{\pm})=\frac{1}{2}\log{(16F_{2n+4}^{2}F_{2n+8}^{2})}=\log{(4F_{2n+4}F_{2n+8})}<(4n+12)\log{\alpha}+\log{4/5}.
\]
Hence we can take
\begin{align*}
h(\alpha_{3})&=h(\sqrt{5}\gamma_{n}^{\pm})\leq h(\sqrt{5})+h(\gamma_{n}^{\pm})<\frac{1}{2}\log{5}+(4n+12)\log{\alpha}+
\log{4/5}\\
&<2\log{\alpha}+(4n+12)\log{\alpha}=(4n+14)\log{\alpha}
\end{align*}
and finally since we need $A_{i}\geq D\cdot h(\alpha_{i})$, we take
\[
A_{1}=2\log{\beta_{n}},\quad A_{2}=2\log{\alpha},\quad A_{3}=8(2n+7)\log{\alpha}.
\]
Next, since $\alpha^{\lambda-2}\leq F_{\lambda}\leq\alpha^{\lambda-1}$, it can be seen that
\[
\beta_{n}<2F_{2n+8}<2\alpha^{2n+5}<\alpha^{2n+7}
\]
and in addition,
\begin{align*}
\alpha^{k-1}&<2\alpha^{k-2}<2F_{k}\leq 12U_{j}V_{j}+2U_{j}^{2}(F_{2n+8}+9F_{2n+4})\\
&<20U_{j}V_{j}+F_{2n+4}F_{2n+8}U_{j}^{2}<(V_{j}+U_{j}\sqrt{F_{2n+4}F_{2n+8}})^{2}-V_{j}^{2}\\
&<(V_{j}+U_{j}\sqrt{F_{2n+4}F_{2n+8}})^{2}=(F_{2n+6}+\sqrt{F_{2n+4}F_{2n+8}})^{2j}\\
&<(2F_{2n+6})^{2j}<(2\alpha^{2n+5})^{2j}<\alpha^{2j(2n+7)}.
\end{align*}
Due to the results above, take
\begin{align*}
E&=\max{\{ 1,\max{\{|b_{j}|A_{j}/A_{N};1\leq j\leq N\}}\}}\\
&\leq\max{\Bigg\{ 2j,k,1,\frac{2j\log{\beta_{n}}}{\log{\alpha}},4j\log{\beta_{n}},\frac{k\log{\alpha}}{\log{\beta_{n}}},2k\log{\alpha},
4(2n+7),\frac{4(2n+7)\log{\alpha}}{\log{\beta_{n}}}\Bigg\}}\\
&=\max{\Big\{k,\frac{2j\log{\beta_{n}}}{\log{\alpha}},4(2n+7)\Big\}}
\leq 2j(2n+7)
\end{align*}
and
\begin{align*}
C(3)&=\frac{8}{2}(5)(9)(16e)^{4}<6.45\times 10^{8}\\
C_{0}&=\log{e^{20.2}3^{5.5}(16)\log{(4e)}}<30\\
W_{0}&=\log{(1.5eE\cdot 4\log{4e})}<\log{(78j(2n+7))}\\
\Omega&=(2\log{\beta_{n}})(2\log{\alpha})(8(2n+7)\log{\alpha}).
\end{align*}
\textit{Proof of Proposition \ref{4.3}}.
Apply Lemma \ref{Matveev} and combine this with Lemma \ref{4.2} to see
\[
2j\log{\beta_{n}}-\log{1162}<-\log{|\Lambda}|<1.434\cdot 10^{11}(2n+7)(\log{\beta_{n}})(\log{(78j(2n+7))}).
\]
Hence
\[
j<1.15\cdot 10^{12}(2n+7)\log{(78j(2n+7))}
\]
as desired.
\qed
%


\section{Linear Form in Two Logarithms}\label{D(9)2Logarithms}

Using $j=1,k=2n$ in $\Lambda$, define the linear form in three logarithms, $\Lambda_{0}$, by
\[
\Lambda_{0}:=2\log{\beta_{n}}-2n\log{\alpha}+\log{(\sqrt{5}\gamma_{n}^{\pm})}.
\]
The following upper bound applies to $\Lambda_{0}$:
\begin{lemma}\label{5.1}
$|\Lambda_{0}|<9473\beta_{n}^{-2}$.
\end{lemma}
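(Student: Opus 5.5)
The plan is to redo the argument of Lemma \ref{4.2} with $j=1$, $k=2n$, using that $(j,k)=(1,2n)$ is an exact solution of \eqref{eq:Main Equation}. The computation preceding the definition of $\beta_{n}$ gives $C_{1}^{-}=F_{2n}$, so for the lower sign
\[
\beta_{n}^{2}\gamma_{n}^{-}-\frac{F_{2n+8}+9F_{2n+4}}{2(F_{2n+6}^{2}-1)}+\beta_{n}^{-2}\gamma_{n}^{+}=\frac{\alpha^{2n}-\overline{\alpha}^{2n}}{\sqrt{5}}.
\]
I therefore read $\Lambda_{0}$ with $\gamma_{n}^{-}$. This is the choice that makes $\Lambda_{0}$ small. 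It is also the one needed later, since the point is to compare $\Lambda$ with $\Lambda_{0}$ and eliminate the $\log(\sqrt{5}\gamma_{n}^{\pm})$ term, leaving a form in two logarithms for Lemma \ref{Laurent}.

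\textbf{Step 1 (the key estimate).} Rearrange the identity above as
\[
\sqrt{5}\gamma_{n}^{-}\beta_{n}^{2}\alpha^{-2n}-1=\frac{\sqrt{5}}{\alpha^{2n}}\Big(\frac{F_{2n+8}+9F_{2n+4}}{2(F_{2n+6}^{2}-1)}-\beta_{n}^{-2}\gamma_{n}^{+}-\frac{\overline{\alpha}^{2n}}{\sqrt{5}}\Big).
\]
Dividing through by $\beta_{n}^{2}\gamma_{n}^{-}$ as in Lemma \ref{4.2} gives
\[
\Big|\alpha^{2n}5^{-1/2}\beta_{n}^{-2}(\gamma_{n}^{-})^{-1}-1\Big|<\frac{1}{\beta_{n}^{2}\gamma_{n}^{-}}\Big(\frac{F_{2n+8}+9F_{2n+4}}{2(F_{2n+6}^{2}-1)}+\beta_{n}^{-2}\gamma_{n}^{+}+\frac{\alpha^{-2n}}{\sqrt{5}}\Big).
\]
I then bound each term inside the bracket.
\begin{itemize}
\item The first term is less than $2/F_{2n+4}$, as in Lemma \ref{4.2}.
\item The term $\beta_{n}^{-2}\gamma_{n}^{+}$ is negligible by Lemma \ref{gamma bounds}.
\item The last term is $\alpha^{-2n}/\sqrt{5}$.
\end{itemize}
By Lemma \ref{gamma bounds}, $1/\gamma_{n}^{-}<\alpha^{2n+4}/0.011$. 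Also $F_{2n+4}\geq(\alpha^{2n+4}-1)/\sqrt{5}$. The first contribution is then about $2\sqrt{5}/0.011\approx 407$, and the last is about $\alpha^{4}/(0.011\sqrt{5})\approx 279$. Together with the tiny middle term, the product should come to at most about $4736$, i.e.
\[
\Big|\alpha^{2n}5^{-1/2}\beta_{n}^{-2}(\gamma_{n}^{-})^{-1}-1\Big|<4736\,\beta_{n}^{-2}.
\]

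\textbf{Step 2 (from the exponential to the logarithm).} Once the right-hand side is below $1/2$, \eqref{eq:e inequality} gives $|\Lambda_{0}|<2\cdot 4736\,\beta_{n}^{-2}<9473\beta_{n}^{-2}$.

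\textbf{Main obstacle: small $n$.} Since $\beta_{n}\geq\beta_{1}=21+\sqrt{440}\approx 42$, the quantity $4736\beta_{n}^{-2}$ is below $1/2$ only once $\beta_{n}^{2}>9472$. That holds for $n\geq 2$, where $\beta_{2}\approx 110$. For $n=1$ the generic estimate fails. In that case I would not use \eqref{eq:e inequality}, but instead compute $\Lambda_{0}$ directly from $\beta_{1}$, $\alpha$ and $\gamma_{1}^{-}$ and check the inequality numerically. If that check fails, the lemma should be stated and used only for $n\geq 2$, with $n=1$ handled separately by direct computation.

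Two smaller points need care:
\begin{itemize}
\item keeping enough precision in the constants $0.011$ and $\alpha^{4}$ so that the final constant comes out as $9473$;
\item confirming that the sign of the bracketed error does not matter, since only an absolute-value bound is claimed.
\end{itemize}
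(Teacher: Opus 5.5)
Your proposal is correct and follows essentially the paper's argument. The exact identity coming from $C_1^-=F_{2n}$ bounds $|\alpha^{2n}5^{-1/2}\beta_n^{-2}(\gamma_n^-)^{-1}-1|$ via Lemma~\ref{gamma bounds}, \eqref{eq:e inequality} turns this into a bound on $|\Lambda_0|$, and $n=1$ is checked directly; the paper splits according to the sign of $\beta_n^2\gamma_n^\pm-\alpha^{2n}/\sqrt5$, where you use a single triangle inequality.

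Two remarks. Your three contributions actually total less than $710$, so $710\beta_1^{-2}<1/2$ and $n=1$ needs no separate treatment (directly, $|\Lambda_0|\approx 0.023$ there). Your choice of $\gamma_n^-$ is also the right one: with $\gamma_n^+$ one gets $\Lambda_0\approx\log 216$, which is what the paper's own $n=1$ check computes (whence $9473\approx 5.376\,\beta_1^2$), and with that sign the stated bound fails for every $n\ge 2$.
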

\proof
Assume for now that $n\geq 2$. After substituting the one known solution, $j=1,k=2n$, into equation \eqref{eq:Main Equation} from earlier, it becomes
\[
\beta_{n}^{2}\gamma_{n}^{\pm}-\frac{\alpha^{2n}}{\sqrt{5}}=\frac{(F_{2n+8}+9F_{2n+4})}{2(F_{2n+6}^{2}-1)}-
\frac{\overline{\alpha}^{2n}}{\sqrt{5}}-\beta_{n}^{-2}\gamma_{n}^{\mp}.
\]
If $\beta_{n}^{2}\gamma_{n}^{\pm}\leq\dfrac{\alpha^{2n}}{\sqrt{5}}$, then $\dfrac{\alpha^{-2n}}{\sqrt{5}}\leq\dfrac{1}{5\beta_{n}^{2}\gamma_{n}^{\pm}}$ and
\begin{align*}
\Big| \alpha^{2n}5^{-1/2}\beta_{n}^{-2}(\gamma_{n}^{\pm})^{-1}-1\Big|&<\frac{\beta_{n}^{-2}\gamma_{n}^{\mp}+
\frac{\alpha^{-2n}}{\sqrt{5}}}{\beta_{n}^{2}\gamma_{n}^{\pm}}\\
&<\frac{\gamma_{n}^{\mp}+\frac{1}{5\gamma_{n}^{\pm}}}{\beta_{n}^{4}\gamma_{n}^{\pm}}\\
&<\frac{235.3+1656.2\alpha^{4n+8}}
{\beta_{n}^{4}}\\
&<\beta_{n}^{-2}(235.3/(55+12\sqrt{21})^2+1656.2)\\
&<1657\beta_{n}^{-2}<\frac{1}{2}.
\end{align*}
If $\beta_{n}^{2}\gamma_{n}^{\pm}>\dfrac{\alpha^{2n}}{\sqrt{5}}$, then
\begin{align*}
\Big| \alpha^{2n}5^{-1/2}\beta_{n}^{-2}(\gamma_{n}^{\pm})^{-1}-1\Big|&<\frac{1/(2F_{2n+4})+1/F_{2n+4}}{\beta_{n}
^{2}\gamma_{n}^{\pm}}\\
&<\frac{3}{2F_{2n+4}\beta_{n}^{2}\gamma_{n}^{\pm}}<306\beta_{n}^{-2}<\frac{1}{2}.
\end{align*}
For $n=1$, $|\Lambda_{0}|=2\log{(21+\sqrt{440})}-2\log{\alpha}+\log{(\sqrt{5}\gamma_{1}^{+})}<9473\beta_{n}^{-2}$. In every case we have (by inequality \eqref{eq:e inequality}) $|\Lambda_{0}|<9473\beta_{n}^{-2}$.\qed


\vspace{5mm}

The following linear form in two logarithms will help to give a hard bound for $j$. Define $\Lambda_{1}$ by
\[
\Lambda_{1}:=K\log{\alpha}-(j-1)\log{(5/4)},
\]
where $K=(2j-1)(2n+6)-k-6$. The following bound applies to $\Lambda_{1}$:
\begin{lemma}\label{5.2}
$|\Lambda_{1}|<(9j+17042)\alpha^{-4n-12}$.
\end{lemma}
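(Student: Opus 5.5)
The plan is to obtain $\Lambda_{1}$ as the difference $\Lambda-\Lambda_{0}$, up to a small error coming from replacing $\log\beta_{n}$ by an explicit combination of $\log\alpha$ and $\log(2/\sqrt{5})$. The term $\log(\sqrt{5}\gamma_{n}^{\pm})$ is common to $\Lambda$ and $\Lambda_{0}$, so it cancels:
\[
\Lambda-\Lambda_{0}=2(j-1)\log\beta_{n}-(k-2n)\log\alpha .
\]
I will write
\[
\varepsilon_{n}:=\log\beta_{n}-(2n+6)\log\alpha-\log\frac{2}{\sqrt{5}} .
\]
Substituting $\log\beta_{n}=(2n+6)\log\alpha+\tfrac12\log(4/5)+\varepsilon_{n}$ gives
\[
\Lambda-\Lambda_{0}=\bigl(2(j-1)(2n+6)+2n-k\bigr)\log\alpha-(j-1)\log(5/4)+2(j-1)\varepsilon_{n}.
\]
The coefficient of $\log\alpha$ equals $(2j-1)(2n+6)-k-6=K$. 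Hence
\[
\Lambda_{1}=\Lambda-\Lambda_{0}-2(j-1)\varepsilon_{n},\qquad |\Lambda_{1}|\le|\Lambda|+|\Lambda_{0}|+2(j-1)|\varepsilon_{n}|.
\]

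Next I bound each of the three terms by a constant times $\alpha^{-4n-12}$. For $\varepsilon_{n}$, I use $\beta_{n}=F_{2n+6}+\sqrt{F_{2n+6}^{2}-1}=2F_{2n+6}-\delta$ with $0<\delta<1/F_{2n+6}$. Binet gives
\[
2F_{2n+6}=\frac{2\alpha^{2n+6}}{\sqrt{5}}\bigl(1-\alpha^{-4n-12}\bigr),
\]
since $2n+6$ is even. Therefore
\[
\varepsilon_{n}=\log\bigl(1-\alpha^{-4n-12}\bigr)+\log\Bigl(1-\frac{\delta}{2F_{2n+6}}\Bigr).
\]
Both logarithms are $O(\alpha^{-4n-12})$, because $1/(2F_{2n+6}^{2})\approx\tfrac52\alpha^{-4n-12}$. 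Using $|\log(1-x)|<x/(1-x)$ should give $|\varepsilon_{n}|<4.5\,\alpha^{-4n-12}$ for $n\ge1$, so $2(j-1)|\varepsilon_{n}|<9j\,\alpha^{-4n-12}$.

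For the other two terms I will use Lemma \ref{4.2} (with $j\ge2$, so $\Lambda<1162\beta_{n}^{-4}\le1162\beta_{n}^{-2}$) and Lemma \ref{5.1} ($|\Lambda_{0}|<9473\beta_{n}^{-2}$). This gives $|\Lambda|+|\Lambda_{0}|<10635\,\beta_{n}^{-2}$. It remains to convert $\beta_{n}^{-2}$ into $\alpha^{-4n-12}$. From $\beta_{n}>2F_{2n+6}-1$ and the Binet expression above, $\beta_{n}^{2}>\tfrac45\alpha^{4n+12}(1-\eta_{n})$ with $\eta_{n}$ tiny already at $n=1$. This yields $\beta_{n}^{-2}<c\,\alpha^{-4n-12}$ with $c$ slightly above $5/4$, which should give a constant of about $10635\cdot1.25\cdots\lesssim17042$. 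The slack can be spent on the small corrections, and the single case $n=1$ can be checked numerically if needed.

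The main obstacle is purely numerical. The constants $9$ and $17042$ have to come out exactly, so the estimates for $\varepsilon_{n}$ and for $\beta_{n}^{-2}\alpha^{4n+12}$ must be carried out carefully and uniformly in $n\ge1$, probably by evaluating at $n=1$ and using monotonicity in $n$. The algebraic identity $\Lambda-\Lambda_{0}=\Lambda_{1}+2(j-1)\varepsilon_{n}$ itself is immediate.
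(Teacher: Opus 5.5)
Your proposal is correct and follows the paper's proof essentially step for step. Your $\varepsilon_n$ is exactly the paper's $\log\delta_n$, and the identity $\Lambda_1=\Lambda-\Lambda_0-2(j-1)\log\delta_n$, the bound $|\log\delta_n|<\tfrac92\alpha^{-4n-12}$ and the appeal to Lemmas~\ref{4.2} and~\ref{5.1} are all the same. The only differences are cosmetic: you keep the negligible $|\Lambda|$ term that the paper omits, and you use $\beta_n^2>\tfrac45\alpha^{4n+12}(1-\eta_n)$ where the paper uses the cruder $\beta_n^2>\tfrac47\alpha^{4n+12}$, so your constant comes out near $1.25\cdot 10635\approx 13300$ instead of the paper's $17042$.

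Be aware of one caveat that you inherit from the paper rather than introduce. $\Lambda_0$ is genuinely small only for $\gamma_n^-$, the sign with $C_1^-=F_{2n}$. With $\gamma_n^+$ one has $\Lambda_0\approx\log 216\approx 5.38$ for every $n$; the paper's $n=1$ constant $9473\approx 5.38\,\beta_1^2$ is exactly this value. So Lemma~\ref{5.1}, and with it this argument for Lemma~\ref{5.2}, really only covers solutions of $F_k=C_j^-$.
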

\proof
Note first that
\begin{align*}
\beta_{n}&=F_{2n+6}+\sqrt{F_{2n+6}^{2}-1}=\frac{(F_{2n+6}+\sqrt{F_{2n+6}^{2}-1})^{2}}{F_{2n+6}+\sqrt{F_{2n+6}^{2}-1}}\\
&=\frac{2F_{2n+6}^{2}+2F_{2n+6}\sqrt{F_{2n+6}^{2}-1}-1}{F_{2n+6}+\sqrt{F_{2n+6}^{2}-1}}=2F_{2n+6}-\frac{1}{F_{2n+6}+
\sqrt{F_{2n+6}^{2}-1}}\\
&=2F_{2n+6}\Bigg(1-\frac{1}{2F_{2n+6}(F_{2n+6}+\sqrt{F_{2n+6}^{2}-1})}\Bigg)\\
\end{align*}
and $2F_{2n+6}=\dfrac{2}{\sqrt{5}}\Big(\alpha^{2n+6}-\overline{\alpha}^{2n+6}\Big)=\dfrac{2}{\sqrt{5}}\alpha^{2n+6}\Big(1-\dfrac{1}{\alpha^{4n+12}}\Big)$. So if we define $\delta_{n}$ by
\[
\delta_{n}=\Bigg(1-\frac{1}{2F_{2n+6}(F_{2n+6}+\sqrt{F_{2n+6}^{2}-1})}\Bigg)\Bigg(1-\frac{1}{\alpha^{4n+12}}\Bigg),
\]
then $\log{\beta_{n}}=\log{(\frac{2}{\sqrt{5}})}+(2n+6)\log{\alpha}+\log{\delta_{n}}$ and it can be seen that
\begin{align*}
\Lambda-\Lambda_{0}&=\Big(2j\log{\beta_{n}}-k\log{\alpha}+\log{(\sqrt{5}\gamma_{n}^{\pm})}\Big)-\Big(2\log{\beta_{n}}-2n\log{\alpha}+\log{(\sqrt{5}\gamma_{n}^{\pm})}\Big)\\
&=(2j-2)\log{\beta_{n}}-(k-2n)\log{\alpha}\\
&=(2j-2)\log{(2/\sqrt{5})}+K\log{\alpha}+(2j-2)\log{\delta_{n}},
\end{align*}
where $K=(2j-1)(2n+6)-k-6$. Hence we have $\Lambda_{1}=\Lambda-\Lambda_{0}-(2j-2)\log{\delta_{n}}$. We can bound $|\log{\delta_{n}}|$ by taking
\begin{align*}
|\log{\delta_{n}}|&\leq \Bigg|\log{\bigg(1-\frac{1}{2F_{2n+6}(F_{2n+6}+\sqrt{F_{2n+6}^{2}-1})}\bigg)}\Bigg|+\Bigg|\log{\bigg(1-\frac{1}{\alpha^{4n+12}}\bigg)}\Bigg|\\
&<\frac{1}{F_{2n+6}\Big(F_{2n+6}+\sqrt{F_{2n+6}^{2}-1}\Big)}+\frac{1}{\alpha^{4n+12}}<\frac{1}{2\alpha^{4n+8}}+\frac{1}{\alpha^{4n+12}}\\
&<\frac{9}{2\alpha^{4n+12}}.
\end{align*}
Here we have used the triangle inequality and \eqref{eq:e inequality}. This then gives
\[
|\Lambda_{1}|\leq |\Lambda|+|\Lambda_{0}|+|2j-2||\log{\delta_{n}}|<\frac{9473}{\beta_{n}^{2}}+\frac{9(j-1)}{\alpha^{4n+12}},
\]
and since
\begin{align*}
\beta_{n}&=F_{2n+6}+\sqrt{F_{2n+6}^{2}-1}>2\alpha^{2n+4}\text{, we must have}\\
\beta_{n}^{2}&>\frac{4}{7}\alpha^{4n+12}.
\end{align*}
Therefore
\[
|\Lambda_{1}|<(9j+17042)\alpha^{-4n-12}.
\]
\qed


%
\begin{lemma}\label{5.3}
If equation \eqref{eq:Fk} has a positive integer solution $(j,k)$ with $j>1$, then
\[
j<3.55\times 10^{19},\text{ and}\quad n<246806.
\]
\end{lemma}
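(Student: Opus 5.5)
The plan is to combine the upper bound $|\Lambda_{1}|<(9j+17042)\alpha^{-4n-12}$ from Lemma \ref{5.2} with the lower bound for linear forms in two logarithms in Lemma \ref{Laurent}. This gives an inequality linking $n$ and $\log j$. Feeding that into Proposition \ref{4.3} then bounds $j$ absolutely, and hence $n$.

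\textbf{Case $\Lambda_{1}=0$.} Then $\alpha^{K}=(5/4)^{j-1}$. Since $\alpha$ is a unit and $5/4$ is not, this forces $j=1$ and $K=0$. That is excluded because $j>1$, so we may assume $\Lambda_{1}\neq 0$.

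\textbf{Applying Lemma \ref{Laurent}.} Take $\gamma_{1}=\alpha$, $\gamma_{2}=5/4$, $b_{1}=K$, $b_{2}=j-1$ (up to sign), with $D=2$. For the heights we may take $h_{1}=\tfrac12\log\alpha$ raised to $\max\{\cdot,1/D\}=1/2$, and $h_{2}=h(5/4)=\log 5$. To control $b'$ we need $|K|$ in terms of $j$. From $\Lambda_{1}$ being tiny,
\[
|K|\log\alpha\approx (j-1)\log(5/4),
\]
so $|K|<j$ once $|\Lambda_{1}|<1$. Hence $b'\le |K|/(2\log 5)+(j-1)\le 2j$. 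Lemma \ref{Laurent} then gives
\[
\log|\Lambda_{1}|\ge -17.9\cdot 16\,\bigl(\max\{\log(2j)+0.38,\,15\}\bigr)^{2}\cdot\tfrac12\log 5 .
\]
Comparing with Lemma \ref{5.2},
\[
(4n+12)\log\alpha-\log(9j+17042)< 230.5\,\bigl(\max\{\log(2j)+0.38,\,15\}\bigr)^{2},
\]
so $n\ll (\log j)^{2}$, with an explicit constant.

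\textbf{Closing the loop.} Substitute this bound on $2n+7$ into Proposition \ref{4.3}. That yields
\[
j<1.15\cdot 10^{12}\,c\,(\log j)^{2}\log\bigl(78\,j\,c(\log j)^{2}\bigr)
\]
for an explicit $c$. This is a self-referential inequality of the form $j<A(\log j)^{3}$, which can be solved numerically to get $j<3.55\cdot 10^{19}$. Plugging that back into the two-logarithm inequality gives $n<246806$. In the case where the maximum equals $15$, $n$ is bounded directly by a small constant and the same conclusion follows.

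\textbf{Main obstacle.} The delicate step is the careful bookkeeping. One must verify that $\Lambda_{1}$ is nonzero and genuinely small, meaning $|\Lambda_{1}|<1$, which needs $n$ or $j$ large enough; otherwise the bound is trivial. The constants in $b'$ and $h_{i}$ must also be tracked so that the stated numerical values come out. I would check first that $|K|<j$ holds, and then solve $j<A(\log j)^{3}$ by iterating numerically.
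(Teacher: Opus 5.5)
Your strategy is the paper's. You apply Laurent's lemma to $\Lambda_1$ with $D=2$ and heights $\tfrac12$ for $\alpha$ and $\log 5$ for $5/4$; the paper only swaps the labels $\gamma_1,\gamma_2$. Comparing with Lemma~\ref{5.2} gives $n<119.75\,(\max\{\log b'+0.38,15\})^{2}+0.52\log(9j+17042)$. The branch where the maximum is $15$ is bounded directly, and otherwise you substitute into Proposition~\ref{4.3} and solve numerically. Your separate treatment of $\Lambda_1=0$ is correct but redundant. Multiplicative independence of $\alpha$ and $5/4$, which Laurent's lemma needs anyway, already forces $\Lambda_1\neq0$ when $j>1$.

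The weak point is the bookkeeping for $b'$, and it affects the numbers you claim. From $|\Lambda_1|<1$ you only get $|K|<0.464(j-1)+2.08$. This does not give $|K|<j$ when $j=2,3$, although $b'\le 2j$ still happens to hold there. The hypothesis $|\Lambda_1|<1$ is also unnecessary. The paper bounds $K$ directly from Lemma~\ref{5.2} using only $n\ge 1$, obtaining $K<0.48j+15.59$ and hence $b'<1.15j+3.85$.

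More seriously, $b'\le 2j$ is too crude to produce the stated constants. With $\log(2j)$ in place of $\log(1.15j+3.85)$, the inequality from Proposition~\ref{4.3} only yields about $j<3.67\times10^{19}$, with a corresponding bound of about $n<2.55\times10^{5}$. So your assertion that it solves to $j<3.55\cdot10^{19}$ is not correct.

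In fairness, the printed constants are marginal even in the paper. Solving the paper's own inequality as written, with $\log(1.15j+3.85)$, gives roughly $j<3.57\times10^{19}$ and $n<2.49\times10^{5}$. The value $246806$ is exactly $119.75(\log j+0.38)^{2}+0.52\log(9j+17042)$ evaluated at $j=3.55\times10^{19}$, i.e.\ with the factor $1.15$ dropped.

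What the subsequent refinement actually uses is only that $j$ lies below the denominator $\approx5.43\times10^{19}$ of the $46$th convergent of $\log(5/4)/\log\alpha$. Your weaker bound also gives that. So your argument goes through, but it proves a slightly weaker lemma than the one you claim.
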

\proof
Apply Lemma \ref{Laurent}. Let
\[
D=2,\quad \gamma_{1}=\frac{5}{4},\quad \gamma_{2}=\alpha,\quad b_{1}=(j-1),\quad b_{2}=K.
\]
In addition, take $h_{1}=\log{5},h_{2}=\dfrac{1}{2}$. By the Lemma \ref{5.2},
\begin{align*}
K&<\frac{(j-1)\log{(5/4)}+(9j+17042)\alpha^{-4n-12}}{\log{\alpha}}\\
&<0.4638(j-1)+0.0085j+16.0466<0.48j+15.59,
\end{align*}
and because
\[
\frac{|b_{1}|}{Dh_{2}}+\frac{|b_{2}|}{Dh_{1}}=(j-1)+\frac{K}{2\log{5}}<1.15j+3.85,
\]
Lemma \ref{Laurent} gives us a lower bound on $\Lambda_{1}$,
\[
\log{|\Lambda_{1}|}>-17.9\cdot 8\cdot\log{5}\cdot\Big(\max{\big\{\log{(1.15j+3.85)}+0.38,15\big\}}\Big)^{2}.
\]
From Lemma \ref{5.2}, it is seen that
\[
\log{|\Lambda_{1}|}<-(4n+12)\log{\alpha}+\log{(9j+17042)}.
\]
A combination of these two bounds yields
\[
n<119.75\Big(\max{\big\{\log{(1.15j+3.85)}+0.38,15\big\}}\Big)^{2}+0.52\log{(9j+17042)}.
\]
If
\[
\log{(1.15j+3.85)}+0.38\leq15
\]
then $j<1943527$ and $n<26952$. Otherwise,
\[
n<119.75\Big(\log{(1.15j+3.85)}+0.38\Big)^{2}+0.52\log{(9j+17042)}
\]
and we substitute this bound into Proposition \ref{4.3} to obtain
\begin{align*}
j<&1.15\cdot 10^{12}(2(119.75\Big(\log{(1.15j+3.85)}+0.38\Big)^{2}+0.52\log{(9j+17042)})+7)\\
&\times\log{(78j(2(119.75\Big(\log{(1.15j+3.85)}+0.38\Big)^{2}+0.52\log{(9j+17042)})+7))},\\
\end{align*}
which means that $j<3.55\times 10^{19}$ and so $n<246806$.
\qed



\section{Refining Our Bounds}\label{RefiningD(9)}

In this section, the bounds on $n$ and $j$ are improved before Baker-Davenport reduction is applied to those bounds in the next section. Lemma \ref{5.2} gives
\[
|K\log{\alpha}-(j-1)\log{(5/4)}|<(9j+17042)\alpha^{-4n-12}.
\]
Thus we can divide by $j-1$ to get
\[
\Big|\frac{\log{(5/4)}}{\log{\alpha}}-\frac{K}{j-1}\Big|<\frac{9j+17042}{(j-1)\alpha^{4n+12}\log{\alpha}}.
\]
Assume that
\begin{equation}\label{eq:6.2}
\frac{9j+17042}{(j-1)\alpha^{4n+12}\log{\alpha}}<\frac{1}{2(j-1)^{2}}.
\end{equation}
By above,
\[
\Big|\frac{\log{(5/4)}}{\log{\alpha}}-\frac{K}{j-1}\Big|<\frac{1}{2(j-1)^{2}}.
\]
By Theorem \ref{Legendre}, $\dfrac{K}{(j-1)}$ must be a convergent in the simple continued fraction expansion of $\log{(5/4)}/\log{\alpha}$. Since the denominator of the 46th convergent 
\[
\frac{25158053660121411107}{54253653513327093513}
\]
is greater than the upper bound of $3.55\times 10^{19}$ established for $j$, we can use the denominator of the 45th convergent
\[
\frac{4460457560349832575}{9619031832089360168}
\]
which is bigger than $9.6\times 10^{18}$ to obtain the following lower bound,
\begin{align*}
\Big|\frac{\log{(5/4)}}{\log{\alpha}}-\frac{K}{j-1}\Big|&\geq\Big|\frac{\log{(5/4)}}{\log{\alpha}}-\frac{4460457560349832575}{9619031832089360168}\Big| >1.9\times 10^{-39}.
\end{align*}
Combining these bounds
\[
1.9\times 10^{-39}<\frac{9j+17042}{(j-1)\alpha^{4n+12}\log{\alpha}}<17060\alpha^{-4n-12}(\log{\alpha})^{-1}
\]
gives $n<49$. It is also known from Lemma \ref{ParQuo} that if $\dfrac{p_{r}}{q_{r}}$ is the $r$th convergent of $\dfrac{\log{(5/4)}}{\log{\alpha}}$, then
\[
\bigg|\frac{\log{(5/4)}}{\log{\alpha}}-\frac{p_{r}}{q_{r}}\bigg|>\frac{1}{(a_{r+1}+2)q_{r}^{2}},
\]
where $a_{r+1}$ is the $(r+1)$st partial quotient of $\dfrac{\log{(5/4)}}{\log{\alpha}}$. Therefore for $2\leq r\leq 45$,
\[
\min{\bigg\{\frac{1}{(a_{r+1}+2)(j-1)^{2}}\bigg\}}<\frac{9j+17042}{(j-1)\alpha^{4n+12}\log{\alpha}}.
\]
Since $\max{\{a_{r+1}:2\leq r\leq 45\}}=a_{36}=49$,
\[
\alpha^{4n+12}<51(j-1)(9j+17042)(\log{\alpha})^{-1}.
\]

If \eqref{eq:6.2} does not hold, i.e., if
\[
\frac{9j+17042}{(j-1)\alpha^{4n+12}\log{\alpha}}\geq\frac{1}{2(j-1)^{2}},
\]
then
\[
\alpha^{4n+12}\leq 2(j-1)(9j+17042)(\log{\alpha})^{-1}.
\]
In either case, 
\[
\alpha^{4n+12}<51(j-1)(9j+17042)(\log{\alpha})^{-1}<444352j^{2}.
\]
This leads to the following proposition.
\begin{proposition}
If equation \eqref{eq:Fk} has a positive integer solution $(j,k)$ with $j>1$, then
\[
n<1.04\log{j}+3.76
\]
\end{proposition}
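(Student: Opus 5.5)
The proposition follows by taking logarithms in the inequality just established,
\[
\alpha^{4n+12}<51(j-1)(9j+17042)(\log{\alpha})^{-1}<444352j^{2},
\]
which holds in both cases: when \eqref{eq:6.2} holds (via Legendre's theorem and Lemma \ref{ParQuo}, with $\max a_{r+1}=49$) and when it fails. So the plan is simply to convert this exponential inequality into a linear bound on $n$ in terms of $\log j$.

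First I will check the numerical step $51(j-1)(9j+17042)/\log\alpha<444352j^{2}$. For $j\ge 2$ we have $9j+17042\le (9+8521)j=8530j$, so $(j-1)(9j+17042)<8530j^{2}$. Since $51\cdot 8530/\log\alpha\approx 435030/0.48121\approx 9.04\times 10^{5}$, this crude bound misses the constant $444352$. A sharper split is therefore needed. Where $j$ is small, the bound is not needed at all, because $n<49$ was already obtained from the 45th convergent. For large $j$ the factor $9j+17042$ is dominated by $9j$. If the constant turns out to be off, the fix is to carry whatever constant arises through the log step; this changes only the additive term $3.76$ slightly.

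Then, taking logarithms,
\[
(4n+12)\log\alpha<\log 444352+2\log j,
\]
so
\[
n<\frac{2}{4\log\alpha}\log j+\frac{\log 444352}{4\log\alpha}-3 .
\]
Here $2/(4\log\alpha)=1/(2\cdot 0.48121)\approx 1.039<1.04$, and $\log 444352\approx 13.004$, giving $13.004/1.92485\approx 6.756$, and $6.756-3=3.756<3.76$. This yields $n<1.04\log j+3.76$, with all constants rounded upward.

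The only step needing care is the constant in the inequality $\alpha^{4n+12}<444352j^{2}$ for all $j\ge 2$, since the remaining steps are direct arithmetic. I expect this constant to come from bounding $(j-1)(9j+17042)\le c\,j^{2}$ in the relevant range of $j$, possibly using that small $j$ are already handled. I would verify this first, and if necessary adjust the additive constant, which does not affect the subsequent Baker–Davenport reduction.
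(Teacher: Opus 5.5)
Your route is the paper's: the proposition is obtained just by taking logarithms in $\alpha^{4n+12}<51(j-1)(9j+17042)(\log\alpha)^{-1}<444352j^{2}$. Your arithmetic for that step is correct: $1/(2\log\alpha)\approx 1.039$ and $\log 444352/(4\log\alpha)-3\approx 3.756$. Your doubt about the constant is also well founded. The paper's middle inequality is false at $j=2$, since $51\cdot 17060/\log\alpha\approx 1.808\times 10^{6}>4\cdot 444352=1\,777\,408$.

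However, neither remedy you propose closes this gap. The earlier bound $n<49$ gives nothing for small $j$; at $j=2$ the proposition asserts $n\le 4$. Enlarging the additive constant $3.76$ would prove a different, weaker statement. Your heuristic also points the wrong way: the worst case is the smallest $j$, not large $j$.

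Here is a fix that proves the statement as written. Put $x=1/j$, so that $(j-1)(9j+17042)/j^{2}=9+17033x-17042x^{2}$. This is increasing for $0<x\le 1/3$. Hence for $j\ge 3$ the ratio is at most its value $\approx 3793.1$ at $j=3$, and $51\cdot 3793.1/\log\alpha\approx 4.02\times 10^{5}<444352$. So $\alpha^{4n+12}<444352j^{2}$ does hold for all $j\ge 3$, and your logarithm step then gives the claim.

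For $j=2$, use $\alpha^{4n+12}<51\cdot 17060/\log\alpha<1.81\times 10^{6}$ directly. This gives $4n+12<29.95$, so $n\le 4<1.04\log 2+3.76\approx 4.48$. Integrality of $n$ is genuinely needed here, because the real-valued bound at $j=2$ is only $n<4.49$.
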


When this bound is combined with the bound for $j$ in Proposition \ref{4.3}, we get
\[
j<1.15\cdot 10^{12}(2.08\log{j}+14.52)\log{(78j(2.08\log{j}+14.52))},
\]
which implies
\begin{lemma}
If equation \eqref{eq:Fk} has a positive integer solution $(j,k)$ with $j>1$, then $j<4.63\times 10^{15}$ and $n<42$.
\end{lemma}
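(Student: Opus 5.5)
The plan is to combine the two bounds we already have and solve the resulting one-variable inequality numerically. From the preceding Proposition, any solution with $j>1$ satisfies $n<1.04\log{j}+3.76$. Proposition \ref{4.3} gives $j<1.15\cdot 10^{12}(2n+7)\log{(78j(2n+7))}$. Since the right-hand side of Proposition \ref{4.3} is increasing in $n$, we may replace $2n+7$ by $2(1.04\log{j}+3.76)+7=2.08\log{j}+14.52$. This gives the displayed implicit inequality
\[
j<1.15\cdot 10^{12}(2.08\log{j}+14.52)\log{(78j(2.08\log{j}+14.52))}.
\]

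Next we bound $j$ from this inequality. Write $f(j)=1.15\cdot 10^{12}(2.08\log{j}+14.52)\log{(78j(2.08\log{j}+14.52))}$. This is polylogarithmic in $j$, so $j-f(j)$ is eventually increasing and the inequality $j<f(j)$ fails for all large $j$. To find the threshold we evaluate at $j_0=4.63\times 10^{15}$:
\begin{itemize}
\item $\log{j_0}\approx 36.07$, so $2.08\log{j_0}+14.52\approx 89.5$;
\item $\log{(78\cdot j_0\cdot 89.5)}\approx \log{(3.23\times 10^{19})}\approx 44.9$;
\item hence $f(j_0)\approx 1.15\cdot 10^{12}\cdot 89.5\cdot 44.9\approx 4.62\times 10^{15}<j_0$.
\end{itemize}
To make sure there is no later crossing, we check that $f'(j)<1$ for $j\ge j_0$. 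The derivative $f'(j)$ is of order $10^{12}\cdot(\text{polylog})/j$, which is far below $1$ there. So $j-f(j)$ is increasing for $j\geq j_0$, is positive at $j_0$, and the inequality forces $j<4.63\times 10^{15}$.

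Finally, substituting this into $n<1.04\log{j}+3.76$ gives $n<1.04\cdot 36.07+3.76\approx 41.3$, so $n<42$ (indeed $n\le 41$).

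The main obstacle is only numerical care. The stated constant $4.63\times 10^{15}$ sits very close to the true crossing point, so the logarithms must be computed precisely enough to confirm $f(j_0)<j_0$. If rounding makes that check fail, I would verify at a slightly larger $j_0$ and adjust the constant. The conclusion $n<42$ has slack and is robust to such an adjustment.
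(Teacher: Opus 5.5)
Your proposal is correct and follows the paper's route exactly: substitute $n<1.04\log j+3.76$ into Proposition~\ref{4.3} and solve the resulting implicit inequality numerically. Your evaluation $f(4.63\times 10^{15})\approx 4.626\times 10^{15}$ checks out, and your derivative argument supplies the monotonicity step that the paper leaves implicit in its ``which implies.'' The bound $n\le 41$ then follows directly.
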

%



\section{Reduction of the Bounds}\label{ReductionD(9)}

We will now use the reduction method of Baker and Davenport to bring the bounds for $j$ and $n$ down to something more computationally manageable. We will then use a procedure written in $\text{Maple}^{\text{TM}}$ to check all remaining possibilities.

We know that
\[
0<2j\log{\beta_{n}}-k\log{\alpha}+\log{(\sqrt{5}\gamma_{n}^{\pm})}<1162\beta_{n}^{-2j}.
\]
In order to apply Baker-Davenport reduction, consider
\[
\kappa=\frac{2\log{\beta_{n}}}{\log{\alpha}},\quad \mu=\frac{\log{(\sqrt{5}\cdot\gamma_{n}^{\pm})}}{\log{\alpha}},\quad A=\frac{1162}{\log{\alpha}},\quad B=\beta_{n}^{2},\quad M=4.63\times 10^{15}.
\]
We then used a set of procedures written in Maple to undertake the computations. In all cases, we obtained $j\leq 6$ and therefore $1\leq n\leq 6$. So the following result holds.
\begin{lemma}
If equation \eqref{eq:Fk} has a positive integer solution $(j,k)$ with $j>1$, then $j\leq 6$ and $n\leq 6$.
\end{lemma}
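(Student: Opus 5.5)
The plan is to run the Dujella--Peth\H{o} variant of the Baker--Davenport lemma separately for each $n$ with $1\le n\le 41$ and each sign choice $\pm$. The input is the inequality of Lemma \ref{4.2},
\[
0<2j\log\beta_n-k\log\alpha+\log(\sqrt5\gamma_n^{\pm})<1162\beta_n^{-2j}.
\]
Dividing through by $\log\alpha$ puts it in the form $0<j\kappa-k+\mu<AB^{-j}$, with $\kappa=2\log\beta_n/\log\alpha$, $\mu=\log(\sqrt5\gamma_n^{\pm})/\log\alpha$, $A=1162/\log\alpha$ and $B=\beta_n^2$. The preceding lemma gives $M=4.63\times10^{15}$ as an upper bound for $j$, and $n<42$ means there are only finitely many pairs $(n,\pm)$ to handle.

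For each pair I will compute the continued fraction of $\kappa$ to high precision, which is legitimate because $\kappa$ is irrational. Irrationality follows from the multiplicative independence established before Proposition \ref{4.3}: $\beta_n$ lies in $\mathbb{Q}(\sqrt d)$ with $d\neq 5$ by Lemma \ref{first nosquare}, so no power of it equals a power of $\alpha$. I will then take the first convergent $P/Q$ with $Q>6M$, which should have $Q$ of size about $10^{17}$ to $10^{18}$, and compute $\eta=\|\mu Q\|-M\|\kappa Q\|$. Whenever $\eta>0$, the lemma rules out solutions with $j\ge \log(AQ/\eta)/\log B$. Since $B=\beta_n^2\ge(2F_8)^2\approx 1700$ even for $n=1$, this threshold is roughly $\log(10^{21})/\log B$, which is at most about $6$ and shrinks as $n$ grows. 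This is where the stated bound $j\le 6$ comes from.

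I expect the main obstacle to be the case $\eta\le 0$. This can happen in particular when $\mu$ is close to an integer combination of $1$ and $\kappa$; for instance, the known solution $j=1$, $k=2n$ forces $\kappa-2n+\mu$ to be tiny, so $\mu\approx 2n-\kappa$. In that situation I will first try the next convergent or two. If that still fails, I will shift the form using $\Lambda_0$: subtracting it gives $(j-1)\kappa-(k-2n)+\mu'$ with $\mu'$ small, and I will either apply the reduction with $j-1$ in place of $j$ or use Legendre's Theorem \ref{Legendre} on $|\kappa-(k-2n)/(j-1)|$, as in Section \ref{RefiningD(9)}. A single reduction step should already bring $j$ down to at most $6$, so no iteration with a smaller $M$ should be needed.

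Finally, the bound on $n$ follows from the new bound on $j$. From $\alpha^{4n+12}<444352j^2$ (equivalently, $n<1.04\log j+3.76$), $j\le6$ gives $n<5.6$, so $n\le5$, which is within $n\le 6$. All of these computations are finite high-precision numerics and can be carried out in Maple.
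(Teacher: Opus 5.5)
Your proposal follows the paper's proof: the Dujella--Peth\H{o} reduction with the same $\kappa$, $\mu$, $A=1162/\log\alpha$, $B=\beta_n^2$ and $M=4.63\times10^{15}$, run for each $n\le 41$ and each sign, with the bound on $n$ then read off from $\alpha^{4n+12}<444352j^2$ (the paper simply reports the Maple output $j\le 6$). Your discussion of $\eta\le 0$ goes beyond the paper and is correct. For the minus sign, $\mu=2n-\kappa+\Lambda_0/\log\alpha$ with $|\Lambda_0|<9473\beta_n^{-2}$ (Lemma~\ref{5.1}), so for the larger $n$ in range the first convergent with $Q>6M$ does give $\eta<0$. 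Rerunning the reduction on the shifted form changes nothing, since it is the same inequality and $\|\mu' Q\|$ is still tiny. Your Legendre argument does resolve it, and so does passing to a much later convergent, which costs nothing because $\log B$ is then large.
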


Applying this result to equation \eqref{eq:Cj} in order to prove Theorem 1.1, it is seen that no combination of $n$ and $j$ with $1\leq n\leq$, $2\leq j\leq6$ yields a Fibonacci number. We have already established that $F_{11}<C_{1}^{+}<F_{12}$, which means the only solution is $C_{1}^{-}=F_{2n}$. When $n=1$, $F_{2n}=F_{2}=1=F_{1}$, giving one extra solution in that case.


\section{Setting up the $D(64)$-Triple}\label{SettingupD(64)}
We now show that for the $D(64)$-triple $\{F_{k}, F_{2n+12}, 16F_{2n+6}\}$, the only value $k$ can take is $2n$.  Note that in this case, $a=16F_{2n+6} $ and $b=F_{2n+12}$ in Lemma \ref{Filipin}, and $64\mid a$ if we assume that $3\mid n$,  which means that all solutions of the equation $4Y\sqrt{F_{2n+6}}+X\sqrt{F_{2n+12}}$ are given by
\[
4Y\sqrt{F_{2n+6}}+X\sqrt{F_{2n+12}}=(\pm 32\sqrt{F_{2n+6}} +8\sqrt{F_{2n+12}})\bigg(\frac{F_{2n+9}+\sqrt{F_{2n+6}F_{2n+12}}}{2}\bigg)^{j},
\] 
where $j\geq 0$.

  Again define sequences $ (U_{j})_{j\geq 0}  \text{ and }  (V_{j})_{j\geq 0}, \text{ this time by}$
  \[
 V_{j}+U_{j} \sqrt{F_{2n+6}F_{2n+12}}:=(F_{2n+9}+\sqrt{F_{2n+6}F_{2n+12}})^{j}.
  \]
   This gives
   \[
    y=y_{j}=\pm 8V_{j}+2U_{j}F_{2n+12} \text{ and } x=x_{j}=8V_{j}\pm 32F_{2n+6}U_{j},
    \]
   so
    \begin{equation}\label{eq:Fk64}
   F_{k}= \pm32 U_{j}V_{j}+4U_{j}^{2}(F_{2n+12}+16F_{2n+6}).
   \end{equation}
   If we let $C_{j}^{(\pm)}:=\pm32 U_{j}V_{j}+4U_{j}^{2}(F_{2n+12}+16F_{2n+6})$ for $j\geq 1$ then our goal now is to solve 
   \begin{equation}\label{eq:Cj64}
   F_{k}= C_{j}^{(\pm)}
   \end{equation}
   for positive $j$ and $k$.
   
   As in the previous section, the equation has the solution $C_{j}^{(-)}=F_{2n}$, so to prove the result it must be shown that there are no other solutions.  We can note immediately that
   \[
   F_{2n+14}<C_{1}^{(+)}<F_{2n+15},
   \]
   so suppose for a contradiction that $j\geq 2$.  In this case,
   \[
   \beta_{n}:=\frac{F_{2n+9}+\sqrt{F_{2n+6}F_{2n+12}}}{2},
   \]
    so we can express
   \[
    V_{j}:=\frac{\beta_{n}^{j}+\beta_{n}^{-j}}{2} \text{ and } U_{{j}}:=\frac{\beta_{n}^{j}-\beta_{n}^{-j}}{2\sqrt{F_{2n+9}^{2}-4}}.
   \]
   We find that $C_{j}^{(\pm)}=\beta_{n}^{2j}\gamma_{n}^{(\pm)}-2\Big(\dfrac{F_{2n+12}+16F_{2n+6}}{F_{2n+9}^{2}-4}\Big)+\beta_{n}^{-2j}\gamma_{n}^{(\mp)}$ where 
   \[
   \gamma_{n}^{(\pm)}:=\pm \frac{8}{\sqrt{F_{2n+9}^{2}-4}}+\frac{F_{2n+12}+16F_{2n+6}}{F_{2n+9}^{2}-4}. 
   \]
    So we must solve:
   \[
    C_{j}^{(\pm)}=\beta_{n}^{2j}\gamma_{n}^{(\pm)}-2\bigg(\frac{F_{2n+12}+16F_{2n+6}}{F_{2n+9}^{2}-4}\bigg)+\beta_{n}^{-2j}\gamma_{n}^{(\mp)}=\frac{\alpha^{k}-\overline{\alpha}^{k}}{\sqrt5}    .
    \]
    

\section{A Linear Form in Three Logarithms (2)}\label{D(64)3Logarithms}
    \begin{lemma}
   The following bounds apply to $\gamma_{n}^{\pm}$,
\begin{align*}
0.00694321\alpha^{-2n-4}&<\gamma_{n}^{-}<0.00733800\alpha^{-2n-4}\\
8.45276900\alpha^{-2n-4}&<\gamma_{n}^{+}<8.46635831\alpha^{-2n-4}.
\end{align*} 
    \end{lemma}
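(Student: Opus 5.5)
The plan is to write $\gamma_{n}^{(\pm)}$ as a perfect square, as in Lemma \ref{gamma bounds}, and then bound each of the two resulting terms using Binet's formula. By Catalan's identity with $r=3$, $F_{m}^{2}-4=F_{m-3}F_{m+3}$ for odd $m$, so that
\[
\frac{F_{m+3}+16F_{m-3}}{F_{m}^{2}-4}\pm\frac{8}{\sqrt{F_{m}^{2}-4}}=\Big(\frac{4}{\sqrt{F_{m+3}}}\pm\frac{1}{\sqrt{F_{m-3}}}\Big)^{2}.
\]
The stated constants are the ones this square produces for the pair $F_{2n+4},F_{2n+10}$, that is, for $m=2n+7$. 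Indeed, with $F_{\lambda}\approx\alpha^{\lambda}/\sqrt5$, the leading term is
\[
\sqrt5\,\alpha^{-2n-4}\big(1\pm4\alpha^{-3}\big)^{2}.
\]
Since $4\alpha^{-3}\approx0.9443$, this gives approximately $8.453\,\alpha^{-2n-4}$ and $0.00694\,\alpha^{-2n-4}$. With the literal pair $F_{2n+6},F_{2n+12}$ one would instead get $\alpha^{-2n-6}$ and the constants $\approx3.229$ and $0.00265$. So I will prove the bounds for
\[
\sqrt{\gamma_{n}^{(\pm)}}=\Big|\frac{1}{\sqrt{F_{2n+4}}}\pm\frac{4}{\sqrt{F_{2n+10}}}\Big|,
\]
with $n\ge3$ because $3\mid n$.

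First I will substitute Binet's formula and factor out $5^{1/4}\alpha^{-n-2}$:
\[
\sqrt{\gamma_{n}^{(\pm)}}=5^{1/4}\alpha^{-n-2}\Big|\frac{1}{\sqrt{1-\alpha^{-4n-8}}}\pm\frac{4\alpha^{-3}}{\sqrt{1-\alpha^{-4n-20}}}\Big|.
\]
Here I use that $F_{\lambda}=(\alpha^{\lambda}-\alpha^{-\lambda})/\sqrt5$ for even $\lambda$. Next I will apply the same Taylor sandwich as in Lemma \ref{gamma bounds}, namely $1+x/2<(1-x)^{-1/2}<1+\frac{x}{2(1-x)}$ for $0<x<1$. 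This places both factors in intervals $[1,1+\varepsilon]$, where $\varepsilon$ is at most about $\alpha^{-4n-8}$, and this is tiny already for $n=3$. Using $4\alpha^{-3}=4(\sqrt5-2)\approx0.944272$, I will then get explicit numerical bounds. For the $+$ sign the bracket lies between $1.944272$ and $1.944272+2\varepsilon$. Squaring and multiplying by $\sqrt5$ gives $8.4528<\cdots<8.4663$.

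The main obstacle is the $-$ sign, because the two terms nearly cancel. The bracket equals
\[
(1-4\alpha^{-3})+\big[(1-\alpha^{-4n-8})^{-1/2}-1\big]-4\alpha^{-3}\big[(1-\alpha^{-4n-20})^{-1/2}-1\big].
\]
Here $1-4\alpha^{-3}\approx0.055728$, so the relative error is magnified by a factor of about $18$. I will handle this by keeping the two correction terms separate rather than bounding the whole bracket crudely. The first correction is positive and at most about $\alpha^{-4n-8}/2(1-\alpha^{-4n-8})$. The second is positive but multiplied by $-4\alpha^{-3}$, and it is of the much smaller size $\alpha^{-4n-20}$. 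The lower bound $0.055728\ldots$ follows by dropping the first correction and bounding the second from above. The upper bound follows by using the worst case $n=3$ in the first correction. Squaring and multiplying by $\sqrt5$ then gives $0.00694321<\gamma_{n}^{-}\alpha^{2n+4}<0.007338$. The generous upper constant leaves room for this $n=3$ worst case. If the numerics are tight for the smallest $n$, I will check $n=3$ (and $n=6$) directly and use the Taylor estimate only for larger $n$.
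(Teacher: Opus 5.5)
You correctly noticed that, for the $\gamma_n^{(\pm)}$ actually defined in the $D(64)$ setup, the stated lower bounds are false (e.g.\ $\gamma_n^{(+)}\approx 3.229\,\alpha^{-2n-4}$). Your tools are also exactly the paper's: the Catalan square, Binet's formula, the sandwich $1+x/2<(1-x)^{-1/2}<1+x/(2(1-x))$, and keeping the two terms apart where they nearly cancel. The gap is in how you repair the statement: you change the object instead of the exponent. $\gamma_n^{(\pm)}$ is not at your disposal. It comes out of $C_j^{(\pm)}$ for the triple $\{16F_{2n+6},F_{2n+12},F_k\}$ and is built from $F_{2n+9}$. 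Since $F_{2n+9}^2-4=F_{2n+6}F_{2n+12}$, it satisfies $\sqrt{\gamma_n^{(\pm)}}=\bigl|4/\sqrt{F_{2n+12}}\pm1/\sqrt{F_{2n+6}}\bigr|$, i.e.\ $m=2n+9$ in your notation. The $m=2n+7$ quantity you propose to bound does not occur anywhere in the argument. So even though your estimates for it are fine, they give nothing for the subsequent bounds on $\Lambda$ and $\Lambda_0$, which is where this lemma is used.

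The misprint is in the exponent; it was evidently carried over from the $D(9)$ lemma, where the smaller factor is $F_{2n+4}$. Your own leading-term computation with $m=2n+9$ gives $\sqrt5\,(1\pm4\alpha^{-3})^2\,\alpha^{-2n-6}$. These are exactly the stated constants $8.4528$ and $0.006944$, but multiplying $\alpha^{-2n-6}$. The values $3.229$ and $0.00265$ you attribute to the literal pair are these same constants times $\alpha^{-2}$, so that sentence mixes the two normalizations and hides the simpler explanation. The paper's proof factors out $5^{1/4}\alpha^{-n-3}$ and thus proves the bounds with $\alpha^{-2n-6}$. This is also how they are quoted later ($\gamma_n^{(+)}<8.47\,\alpha^{-2n-6}$ and $\gamma_n^{(\pm)}>0.00694321\,\alpha^{-2n-6}$). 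To fix your proof, keep the paper's $\gamma_n^{(\pm)}$ and write
\[
\sqrt{\gamma_n^{(\pm)}}=5^{1/4}\alpha^{-n-3}\,\Bigl|\frac{4\alpha^{-3}}{\sqrt{1-\alpha^{-4n-24}}}\pm\frac{1}{\sqrt{1-\alpha^{-4n-12}}}\Bigr| .
\]
Your sandwich and your separate treatment of the two correction terms in the minus case then go through verbatim. The main terms are $\sqrt5(1+4\alpha^{-3})^2=129\sqrt5-280$ and $\sqrt5(1-4\alpha^{-3})^2=161\sqrt5-360$. The corrections are now of size $\alpha^{-4n-12}$ and $\alpha^{-4n-24}$, and they keep $\gamma_n^{(\pm)}\alpha^{2n+6}$ inside the stated intervals for every $n\ge1$. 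So neither the hypothesis $3\mid n$ nor a direct small-$n$ check is needed.
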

    Using the same identity from earlier,
    \[
    \sqrt{\gamma_{n}^{(\pm)}}=\bigg(\frac{4}{\sqrt{F_{2n+12}}}\pm \frac{1}{\sqrt{F_{2n+6}}}\bigg)=5^{1/4}\alpha^{-n-3}\Bigg(\frac{4}{\alpha^3\sqrt{1-\frac{1}{\alpha^{4n+24}}}}\pm \frac{1}{\sqrt{1-\frac{1}{\alpha^{4n+12}}}}\Bigg), 
    \]
    giving
    \[ 
    1+\dfrac{1}{2}\bigg(\dfrac{1}{\alpha^{4n+24}}\bigg)<\dfrac{1}{\sqrt{1-\frac{1}{\alpha^{4n+24}}}}<1+\dfrac{\frac{1}{\alpha^{4n+24}}}{2(1-\frac{1}{\alpha^{4n+24}})}<\dfrac{4}{\alpha^{3}}\cdot \dfrac{1}{\sqrt{1-\frac{1}{\alpha^{4n+24}}}}, 
    \]
    so
    \[ 
    0.9442719<\frac{4}{\alpha^{3}}\frac{1}{\sqrt{1-\frac{1}{\alpha^{4n+24}}}}<0.9442765.
    \]
    Thus: 
    \[
    1<\dfrac{1}{\sqrt{1-\frac{1}{\alpha^{4n+24}}}}<1+\dfrac{\frac{1}{\alpha^{4n+12}}}{2(1-\frac{1}{\alpha^{4n+12}})}<1.00155765.
    \]
    Rearranging
    \[
     1.9442719<\dfrac{\sqrt{\gamma_{n}^{+}}}{5^{1/4}\alpha^{-n-3}}<1.94583415 
  \]
  and
  \[
  -0.0572858<\dfrac{\sqrt{\gamma_{n}^{-}}}{5^{1/4}\alpha^{-n-3}}<-0.0557234
  \]
   gives the result. \qed

  \vspace{5mm}  
    Define a linear form in logarithms, $\Lambda :=2j\log{(\beta_{n})}-k\log{(\alpha)}+\log{(\sqrt{5}(\gamma_{n}^{(\pm)})}$.
    
    \begin{lemma}
    $0<\Lambda<3868\beta_{n}^{-2j}$ for $j\geq 2$.
    \end{lemma}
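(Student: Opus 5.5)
The plan is to copy the proof of Lemma \ref{4.2} with the $D(64)$ data. We have
\[
\beta_{n}=\frac{F_{2n+9}+\sqrt{F_{2n+6}F_{2n+12}}}{2},\qquad
\frac{F_{2n+12}+16F_{2n+6}}{F_{2n+9}^{2}-4},
\]
and the bounds on $\gamma_{n}^{(\pm)}$ from the preceding lemma. From $C_{j}^{(\pm)}=F_{k}$ and Binet's formula we get
\[
\beta_{n}^{2j}\gamma_{n}^{(\pm)}-\frac{\alpha^{k}}{\sqrt5}=2\,\frac{F_{2n+12}+16F_{2n+6}}{F_{2n+9}^{2}-4}-\frac{\overline{\alpha}^{k}}{\sqrt5}-\beta_{n}^{-2j}\gamma_{n}^{(\mp)} .
\]
Since $\Lambda=\log\bigl(\sqrt5\,\gamma_{n}^{(\pm)}\beta_{n}^{2j}\alpha^{-k}\bigr)$, showing $\Lambda>0$ is the same as showing $\beta_{n}^{2j}\gamma_{n}^{(\pm)}>\alpha^{k}/\sqrt5$.

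For positivity, suppose instead that $\beta_{n}^{2j}\gamma_{n}^{(\pm)}\le \alpha^{k}/\sqrt5$. Then $\sqrt5\,\alpha^{-k}\le \beta_{n}^{-2j}/\gamma_{n}^{(-)}$, so $|\overline{\alpha}^{k}|/\sqrt5\le \beta_{n}^{-2j}/(5\gamma_{n}^{(-)})$. The middle term on the right is positive and of size roughly $2(1/F_{2n+6}+16/F_{2n+12})$, which is at least of order $1/F_{2n+6}$. Rearranging the displayed identity gives
\[
\frac{c}{F_{2n+6}}<\beta_{n}^{-2j}\Bigl(\gamma_{n}^{(+)}+\frac{1}{5\gamma_{n}^{(-)}}\Bigr).
\]
Using $\gamma_{n}^{(+)}<8.47\alpha^{-2n-4}$ and $1/\gamma_{n}^{(-)}<145\alpha^{2n+4}$, the right-hand side is at most about $29\,\alpha^{2n+4}\beta_{n}^{-2j}$. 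Meanwhile $\beta_{n}^{2j}\ge\beta_{n}^{4}$ is of order $F_{2n+9}^{4}\approx\alpha^{8n}$, and this is far too large for the inequality to hold when $j\ge2$. That contradiction gives $\Lambda>0$.

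For the upper bound, divide the identity by $\beta_{n}^{2j}\gamma_{n}^{(\pm)}$:
\[
\bigl|\alpha^{k}5^{-1/2}\beta_{n}^{-2j}(\gamma_{n}^{(\pm)})^{-1}-1\bigr|<\frac{1}{\beta_{n}^{2j}\gamma_{n}^{(\pm)}}\Bigl(2\,\frac{F_{2n+12}+16F_{2n+6}}{F_{2n+9}^{2}-4}+\frac{1}{\sqrt5\,\alpha^{k}}\Bigr).
\]
We may take $k>2n$, by the same monotonicity of $x_{j}$ as in Section~\ref{D(9)}, so $\alpha^{-k}$ is negligible. The bracket is then at most about $c'/F_{2n+6}\approx c'\sqrt5\,\alpha^{-2n-6}$. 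In the worst case $\gamma_{n}^{(-)}>0.00694\alpha^{-2n-4}$, the ratio bracket$/\gamma_{n}^{(\pm)}$ is therefore bounded by an absolute constant. Tracking the constants explicitly should give $1934$, so the left side is below $1934\beta_{n}^{-2j}<1/2$. Inequality \eqref{eq:e inequality} then yields $\Lambda<2\cdot1934\,\beta_{n}^{-2j}=3868\beta_{n}^{-2j}$.

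The main obstacle is the bookkeeping of numerical constants so that the bound comes out exactly as $1934$. This requires a sharp upper estimate of $2(F_{2n+12}+16F_{2n+6})/(F_{2n+9}^{2}-4)$ in terms of $\alpha^{-2n-4}$, uniform for $n\ge1$ (and, if necessary, for the relevant small $n$ checked directly), divided by the lower bound $0.00694321$ for $\gamma_{n}^{(-)}$. I would first write $F_{2n+9}^{2}-4=F_{2n+6}F_{2n+12}$, so the quantity becomes $2/F_{2n+6}+32/F_{2n+12}$. Then I would bound $F_{m}\ge \alpha^{m}(1-\alpha^{-2m})/\sqrt5$ and take the worst case $n=1$.
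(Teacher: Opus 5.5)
Your argument follows the paper's proof step for step: positivity by contradiction through $\beta_n^{2j}<F_{2n+6}\bigl(\gamma_n^{(+)}+1/(5\gamma_n^{(-)})\bigr)$, then dividing the identity by $\beta_n^{2j}\gamma_n^{(\pm)}$ and using $|\Lambda|<2|e^{\Lambda}-1|$. Two points in your constant bookkeeping are wrong, though, and they determine whether the constant actually comes out as $1934$.

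First, the factor $\alpha^{-2n-4}$ in the preceding lemma's statement is a misprint. Since $\gamma_n^{(\pm)}=\bigl(4/\sqrt{F_{2n+12}}\pm 1/\sqrt{F_{2n+6}}\bigr)^2=\sqrt5\,\alpha^{-2n-6}(\cdots)^2$, the correct bounds carry $\alpha^{-2n-6}$. The inequality $1/\gamma_n^{(-)}<145\alpha^{2n+4}$ that you use is therefore false: at $n=1$, $1/\gamma_1^{(-)}\approx 6710$, while $145\alpha^{6}\approx 2602$. The paper's proof actually uses $1/\gamma_n^{(-)}<144.026\,\alpha^{2n+6}$ and $1/(5\gamma_n^{(-)})<28.81\,\alpha^{2n+6}$. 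This error is harmless for positivity, where the contradiction has enormous slack.

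Second, $\alpha^{-k}/\sqrt5$ is not negligible. For even $k$ the term has a favourable sign and can be dropped. For odd $k$ you only know $k\ge 2n+1$, so the term can be as large as $\alpha^{-2n-1}/\sqrt5\approx 4.96\,\alpha^{-2n-6}$, which exceeds $2/F_{2n+6}$. The paper's constant is $144.026\bigl[2\sqrt5/(1-\alpha^{-16})+32\sqrt5/(\alpha^{6}-\alpha^{-22})+\alpha^{5}/\sqrt5\bigr]\approx 644+574+714<1934$. Your final plan bounds only $2/F_{2n+6}+32/F_{2n+12}$ and divides by $0.00694321$, which omits roughly $714$ and would not justify the bound. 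With these two corrections your sketch goes through.
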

    \proof
    We have 
    \[
    \beta_{n}^{2j}\gamma_{n}^{(\pm)}-\dfrac{\alpha^{k}}{\sqrt{5}}=\dfrac{2(F_{2n+12}+16F_{2n+6})}{F_{2n+9}^{2}-4}-\beta_{n}^{-2j}\gamma_{n}^{(\mp)}-\dfrac{\overline\alpha^{k}}{\sqrt{5}}.
    \]
    Suppose for a contradiction that $\beta_{n}^{2j}\gamma_{n}^{(\pm)}\leq \dfrac{\alpha_{n}^{k}}{\sqrt{5}}$, thus
    \[
    \dfrac{\sqrt{5}}{\alpha^{k}}\leq\dfrac{\beta_{n}^{-2j}}{\gamma_{n}^{(\pm)}}\leq \dfrac{\beta_{n}^{-2j}}{\gamma_{n}^{(-)}}, 
    \]
    so 
    \begin{align*}
    \dfrac{1}{F_{2n+6}}&<\dfrac{2}{F_{2n+6}}+\dfrac{32}{F_{2n+12}}=\dfrac{2(F_{2n+12}+16F_{2n+6})}{F_{2n+9}^{2}-4}\\
   &<\beta_{n}^{-2j}\gamma_{n}^{(\mp)}+\dfrac{\overline\alpha^{k}}{\sqrt{5}}<\beta_{n}^{-2j}\gamma_{n}^{(+)}+\dfrac{1}{\sqrt{5}\alpha^{k}}\\
   &\leq\beta_{n}^{-2j}\gamma_{n}^{(+)}+\dfrac{1}{\sqrt{5}}\bigg(\dfrac{\beta_{n}^{-2j}}{\sqrt{5}\gamma_{n}^{(-)}}\bigg)=\beta_{n}^{-2j}\bigg(\gamma_{n}^{(+)}+\dfrac{1}{5\gamma_{n}^{(-)}}\bigg).
   \end{align*}
    Since
    \[
     F_{2n+6}^{j}F_{2n+12}^{j}<\beta_{n}^{2j}<F_{2n+6}\bigg(\gamma_{n}^{(+)}+\frac{1}{5\gamma_{n}^{(-)}}\bigg)<F_{2n+6}(8.47\alpha^{-2n-6}+28.81\alpha^{2n+6}),
     \]
    it is seen that
    \[
    F_{2n+6}^{j-1}F_{2n+12}^{j}<8.47\alpha^{-2n-6}+28.81\alpha^{2n+6},
    \]
    which is a contradiction.  Therefore $\beta_{n}^{2j}\gamma_{n}^{(\pm)}-\dfrac{\alpha^{k}}{\sqrt{5}}>0$, so $\Lambda>0$.
    
    Furthermore, 
    \begin{align*}
    |\alpha^{k}5^{-1/2}\beta_{n}^{-2j}(\gamma_{n}^{(\pm)})^{-1}-1|&=\frac{1}{\beta_{n}^{2j}\gamma_{n}^{(\pm)}}\bigg|\frac{\alpha^{k}}{\sqrt{5}}-\beta_{n}^{2j}\gamma_{n}^{(\pm)}\bigg|=\frac{1}{\beta_{n}^{2j}\gamma_{n}^{(\pm)}}\bigg(\beta_{n}^{2j}\gamma_{n}^{(\pm)}-\frac{\alpha^{k}}{\sqrt{5}}\bigg)\\
    &=\frac{1}{\beta_{n}^{2j}\gamma_{n}^{(\pm)}}\bigg[\frac{2(F_{2n+12}+16F_{2n+6})}{F_{2n+9}^{2}-4}-\beta_{n}^{-2j}\gamma_{n}^{(\mp)}-\frac{\overline\alpha^{k}}{\sqrt{5}}\bigg]\\
    &<\frac{1}{\beta_{n}^{2j}\gamma_{n}^{(\pm)}}\bigg[\frac{2(F_{2n+12}+16F_{2n+6})}{F_{2n+9}^{2}-4}+\frac{1}{\alpha^{k}\sqrt{5}}\bigg]\\
        &\leq\frac{1}{\beta_{n}^{2j}\gamma_{n}^{(\pm)}}\bigg[\frac{2(F_{2n+12}+16F_{2n+6})}{F_{2n+9}^{2}-4}+\frac{1}{\alpha^{2n+1}\sqrt{5}}\bigg].
        \end{align*}
        We assume $k$ is odd (for $k$ even the inequality is clear), and $k\geq2n$ from earlier.  We then have that the above is equal to:
        \begin{align*}
        &=\frac{1}{\beta_{n}^{2j}\gamma_{n}^{(\pm)}}\bigg[\frac{2}{F_{2n+6}}+\frac{32}{F_{2n+12}}+\frac{1}{\alpha^{2n+1}\sqrt{5}}\bigg]\\
        &<\frac{144.026}{\beta_{n}^{2j}}\alpha^{2n+6}\bigg[\frac{2\sqrt{5}}{\alpha^{2n+6}-\frac{1}{\alpha^{2n+6}}}+\frac{32\sqrt{5}}{\alpha^{2n+12}-\frac{1}{\alpha^{2n+12}}}+\frac{1}{\alpha^{2n+1}\sqrt{5}}\bigg]\\
        &=\frac{144.026}{\beta_{n}^{2j}}\bigg[\frac{2\sqrt{5}}{1-\frac{1}{\alpha^{4n+12}}}+\frac{32\sqrt{5}}{\alpha^{6}-\frac{1}{\alpha^{4n+18}}}+\frac{\alpha^{5}}{\sqrt{5}}\bigg]\\
        &\leq\frac{144.026}{\beta_{n}^{2j}}\bigg[\frac{2\sqrt{5}}{1-\frac{1}{\alpha^{16}}}+\frac{32\sqrt{5}}{\alpha^{6}-\frac{1}{\alpha^{22}}}+\frac{\alpha^{5}}{\sqrt{5}}\bigg]<\frac{1934}{\beta_{n}^{2j}}<\frac{1}{2}.
        \end{align*}
    So $|\Lambda |<\dfrac{3868}{\beta_{n}^{2j}}$\qed
    
    \begin{proposition}\label{prp:64}
   If  \eqref{eq:Fk64} has a positive integer solution $(j,k)$ with $j>1$, then 
   \[
   j<1.16\cdot10^{10}(2n+9)\log{(156j(n+5))}.
   \]
    \end{proposition}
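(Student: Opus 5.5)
The plan mirrors the proof of Proposition \ref{4.3}: I would apply Lemma \ref{Matveev} to the linear form
\[
\Lambda=2j\log{\beta_{n}}-k\log{\alpha}+\log{(\sqrt{5}\gamma_{n}^{(\pm)})},
\]
and compare the resulting lower bound with the upper bound $0<\Lambda<3868\beta_{n}^{-2j}$ just established. The parameters are $N=3$, $D=4$, $b_{1}=2j$, $b_{2}=-k$, $b_{3}=1$, and $\alpha_{1}=\beta_{n}$, $\alpha_{2}=\alpha$, $\alpha_{3}=\sqrt{5}\gamma_{n}^{(\pm)}$.

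\textbf{Multiplicative independence.} This follows exactly as before, now using Lemma \ref{second nosquare}.
\begin{enumerate}
\item Since $F_{2n+6}F_{2n+12}$ is neither a square nor $5$ times a square, $\alpha\in\mathbb{Q}(\sqrt{5})$ has no nonzero power lying in $\mathbb{Q}(\sqrt{F_{2n+6}F_{2n+12}})$.
\item That field contains $\beta_{n}$ and $\alpha_{3}^{2}$, so a dependence would force $\alpha_{3}^{2}$ to be multiplicatively dependent on the unit $\beta_{n}$. (Here $\beta_n$ is a unit because $F_{2n+9}^2-F_{2n+6}F_{2n+12}=4$.)
\item Hence $\alpha_{3}^{2}$ would itself be a unit. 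But its norm is $25(\gamma_{n}^{(+)}\gamma_{n}^{(-)})^{2}$, and from $\sqrt{\gamma_{n}^{(\pm)}}=4/\sqrt{F_{2n+12}}\pm 1/\sqrt{F_{2n+6}}$ we get $\gamma_{n}^{(+)}\gamma_{n}^{(-)}=(16F_{2n+6}-F_{2n+12})^{2}/(F_{2n+6}F_{2n+12})^{2}$. This is less than $1$ and positive, so the norm is not an integer, a contradiction.
\end{enumerate}

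\textbf{Heights.} The conjugates $\gamma_{n}^{(\pm)}$ satisfy
\[
F_{2n+6}^{2}F_{2n+12}^{2}x^{2}-2(F_{2n+12}+16F_{2n+6})F_{2n+6}F_{2n+12}\,x+(16F_{2n+6}-F_{2n+12})^{2}=0,
\]
and both roots lie below $1$ by the preceding lemma. Hence $h(\gamma_{n}^{(\pm)})\le\log{(F_{2n+6}F_{2n+12})}<(4n+16)\log{\alpha}$, and adding $h(\sqrt{5})=\tfrac12\log 5<2\log\alpha$ gives $h(\alpha_{3})<(4n+18)\log{\alpha}=2(2n+9)\log\alpha$. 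Also $h(\beta_{n})=\tfrac12\log\beta_{n}$ and $h(\alpha)=\tfrac12\log\alpha$. This suggests taking
\[
A_{1}=2\log{\beta_{n}},\quad A_{2}=2\log{\alpha},\quad A_{3}=8(2n+9)\log{\alpha}.
\]

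\textbf{The parameter $E$.} I would bound $\beta_{n}<F_{2n+9}<\alpha^{2n+8}$. Then, as in Section \ref{D(9)3Logarithms},
\[
\alpha^{k-2}\le F_{k}\le 32U_jV_j+4U_j^2(F_{2n+12}+16F_{2n+6})<c\,\beta_n^{2j}
\]
for a modest constant $c$, giving $k<2j(2n+9)$ or so. Since $2j\log\beta_n/\log\alpha<2j(2n+8)$, we get $E\le\max\{k,\,2j\log\beta_n/\log\alpha,\,4(2n+9)\}\le 2j(2n+9)$, roughly $4j(n+5)$. Then
\[
W_{0}=\log{(1.5eE\cdot 4\log(4e))}<\log{(156j(n+5))},
\]
which matches the form in the statement. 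The constants are $C(3)<6.45\cdot10^{8}$ and $C_{0}<30$ as before, with $\Omega=32(2n+9)\log\beta_n(\log\alpha)^2$.

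\textbf{Combining the bounds.} Lemma \ref{Matveev} yields
\[
2j\log\beta_n-\log 3868<-\log\Lambda<C\,(2n+9)\log\beta_n\log{(156j(n+5))}.
\]
Dividing by $2\log\beta_n$ and absorbing $\log 3868/(2\log\beta_n)$ into the constant gives the bound stated in Proposition \ref{prp:64}.

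The main obstacle is the bookkeeping in $E$ and $A_3$. One has to confirm that $k$ and $2j\log\beta_n/\log\alpha$ are both dominated by $2j(2n+9)$; this needs a clean upper bound on $F_k$ in terms of $\beta_n^{2j}$, where the extra factor $4$ and the halved $\beta_n$ differ from the $D(9)$ case. One must also track the numerical constant carefully so that the product $C(3)C_0D^2\cdot 32(\log\alpha)^2/2$ comes out near $1.16\cdot10^{10}$ rather than the $10^{12}$-size constant of Proposition \ref{4.3}.
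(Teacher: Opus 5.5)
Your argument is the paper's own, and everything up to the last step checks out: the same Matveev set-up, the independence argument via Lemma \ref{second nosquare}, $A_1=2\log\beta_n$, $A_2=2\log\alpha$, $A_3=8(2n+9)\log\alpha$, $E\le 4j(n+5)$, and $W_0<\log(156j(n+5))$. Your observation that $\beta_n$ is a unit of norm $(F_{2n+9}^2-F_{2n+6}F_{2n+12})/4=1$, and your use of $h(\gamma_n^{(\pm)})\le\log(F_{2n+6}F_{2n+12})$ to cover a possibly non-primitive polynomial, are in fact a little cleaner than the paper.

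The gap is the final step, which you called ``bookkeeping'' but which cannot be done. The $D(64)$ setting leaves $N$, $D$ and the shape of $\Omega=32(2n+9)(\log\alpha)^2\log\beta_n$ unchanged from Proposition \ref{4.3}; only $2n+7$ becomes $2n+9$. So
\[
\tfrac12\, C(3)C_0D^2\cdot 32(\log\alpha)^2<\tfrac12\cdot 6.45\cdot 10^{8}\cdot 30\cdot 16\cdot 32\cdot 0.2316\approx 1.15\cdot 10^{12}.
\]
Dividing $2j\log\beta_n-\log 3868<-\log\Lambda$ by $2\log\beta_n$ therefore gives only $j<1.15\cdot10^{12}(2n+9)\log(156j(n+5))$, plus a negligible additive term. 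It does not give the stated $1.16\cdot 10^{10}$, so your sentence claiming the stated bound follows after absorbing constants is false as written.

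This is a defect in the statement, not in your method. The paper's displayed Matveev inequality reaches $10^{10}$ only by dropping the factor $D^2=16$, and even then the arithmetic gives about $7\cdot 10^{10}$. The later lemma that derives $j<4.88\times10^{15}$ and $n<43$ from $n<1.04\log j+4.6$ is consistent only with the constant $1.16\cdot 10^{12}$. You should state and prove $j<1.16\cdot10^{12}(2n+9)\log(156j(n+5))$ instead. That is exactly what your argument yields, and it is what the subsequent reduction actually uses.
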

    \proof
    Take \begin{gather*}
    N=3,\quad D=4,\quad b_{1}=2j,\quad b_{2}=-k,\quad b_{3}=1,\\
     \alpha_{1}=\beta_{n},\quad \alpha_{2}=\alpha,\quad \alpha_{3}=\sqrt{5}\gamma_{n}^{(\pm)}.
    \end{gather*}  
    From Lemma \ref{second nosquare}, $\alpha_{1}, \alpha_{2}, \alpha_{3}$ are multiplicatively independent.  
    
    The logarithmic heights are
    \begin{gather*}
    h( \alpha_{1})=h(\beta_{n})=\frac{1}{2}\log{\beta_{n}}, \text{ and } h(\alpha_{2})=h(\alpha)=\frac{1}{2}\log{\alpha}.
    \end{gather*}  
    We now calculate $h(\alpha_{3}).$
    \begin{gather*}
    (X-\gamma_{n}^{(+)})(X-\gamma_{n}^{(-)})=X^{2}-(\gamma_{n}^{(+)}+\gamma_{n}^{(-)})X+\bigg(\frac{F_{2n+12}-16F_{2n+6}}{F_{2n+6}F_{2n+12}}\bigg)^{2}\\
    =F_{2n+6}^{2}F_{2n+12}^{2}X^{2}-2(F_{2n+12}^{2}F_{2n+6}+16F_{2n+12}F_{2n+6}^{2})X+(F_{2n+12}-16F_{2n+6})^{2},
    \end{gather*}
  so
  \begin{align*}
  h(\gamma_{n}^{(\pm)})&=\frac{1}{2}\bigg[\log{(F_{2n+6}^{2}F_{2n+12}^{2})}+\log{(1)}+\log{(1)}\bigg]=\log{(F_{2n+6}F_{2n+12})}\\
  &<\log{\bigg( \frac{\alpha^{2n+6}\alpha^{2n+12}}{5} \bigg)}=(4n+18)\log{(\alpha)}+\log{\bigg(\frac{1}{5}\bigg)}.
  \end{align*}
  Thus
  \begin{align*}
  h(\alpha_{3})&=h(\sqrt{5} \gamma_{n}^{(\pm)})\leq h(\sqrt{5})+h(\gamma_{n}^{(\pm)})\\
  &<\frac{1}{2}\log{(5)}+(4n+18)\log{(\alpha)}+\log{\bigg(\frac{1}{5}\bigg)}\\
  &=(4n+18)\log{(\alpha)}.
  \end{align*}
  We have
  \begin{align*} 
  A_{1}&=\max{\{2\log{(\beta_{n})},|\log{(\beta_{n})|}\}}=2\log({\beta_{n}}),\\
   A_{2}&=\max{\{2\log{\alpha}, |\log{\alpha|}\}}=2\log{\alpha,}\\
    A_{3}&=\max{\{4(4n+18)\log{(\alpha)},|\log{(\sqrt{5}\gamma_{n}^{(\pm)})}|\}}
  =8(2n+9)\log{(\alpha)}.
  \end{align*}
  As $\alpha^{l-2}\leq F_{l}\leq\alpha^{l-1}$, it can be seen that $\beta_{n}<2F_{2n+9}<2\alpha^{2n+8}<\alpha^{2(n+5)}$.  
  
  \noindent Moreover, 
  
  \begin{align*}
  \alpha^{k-1}&<2\alpha^{k-2}<2F_{k}\leq64U_{j}V_{j}+8U_{j}^{2}(F_{2n+12}+16F_{2n+6})\\
  &<(V_{j}+U_{j}\sqrt{F_{2n+6}F_{2n+12}})^{2}=\bigg(\frac{F_{2n+9}+\sqrt{F_{2n+9}^{2}-4}}{2}\bigg)^{2j}\\
&<F_{2n+9}^{2j}<(\alpha^{2n+8})^{2j}<\alpha^{4j(n+4)}.
\end{align*}
Let
\begin{align*}
E&\leq 4j(n+5) \\
C(3)&<6.45\times10^{8} \\
C_{0}&<30\\
W_{0}&=\log{(1.5eE4\log{(4e)})}<\log{(156j(n+5))}\\
\Omega&=A_{1}A_{2}A_{3}\leq 32(2n+9)\log{(\alpha)}\log{(\beta_{n})}.
\end{align*}
This gives by Lemma \ref{Matveev}
\begin{align*}
\log{|\Lambda|}&>-C(N)C_{0}W_{0}D^{2}\Omega\\
&\geq-6.45\times10^{8}\cdot30\log{(156j(n+5))}(32(2n+9)(\log{(\alpha)})^{2}(\log{(\beta_{n})})).
\end{align*}
So
\begin{align*}
\log{\Lambda}&<\log{\frac{3868}{\beta_{n}^{2j}}}=\log{(3868)}-2j\log{(\beta_{n})}\\
j&<1.16\times10^{10}(2n+9)\log{(156j(n+5))}.
\end{align*}
\qed
  
    
    \section{Linear Form in Two Logarithms (2)}\label{D(64)2Logarithms}
\begin{lemma}
Define $\Lambda_{0}=2\log{\beta_{n}}-2n\log{\alpha}+\log{(\sqrt{5}\gamma_{n}^{(\pm)})}$.  The following bound holds
\[
 |\Lambda_{0}|<\dfrac{270830}{\beta_{n}^{2}}.
\]
\end{lemma}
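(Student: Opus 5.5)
We follow the proof of Lemma \ref{5.1} from the $D(9)$ case. The starting point is the known solution $j=1$, $k=2n$, namely $C_{1}^{(-)}=F_{2n}$. Substituting it into the main equation of Section \ref{SettingupD(64)} gives
\[
\beta_{n}^{2}\gamma_{n}^{(\pm)}-\frac{\alpha^{2n}}{\sqrt{5}}=2\Big(\frac{F_{2n+12}+16F_{2n+6}}{F_{2n+9}^{2}-4}\Big)-\frac{\overline{\alpha}^{2n}}{\sqrt{5}}-\beta_{n}^{-2}\gamma_{n}^{(\mp)}.
\]
Note that $\Lambda_{0}=\log\big(\sqrt{5}\gamma_{n}^{(\pm)}\beta_{n}^{2}\alpha^{-2n}\big)$. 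The goal is to show
\[
\big|\alpha^{2n}5^{-1/2}\beta_{n}^{-2}(\gamma_{n}^{(\pm)})^{-1}-1\big|<135415\,\beta_{n}^{-2}<\tfrac12 .
\]
Inequality \eqref{eq:e inequality} then gives $|\Lambda_{0}|<270830\beta_{n}^{-2}$. The work splits by the sign of the left-hand side of the displayed equation, exactly as in Lemma \ref{5.1}.

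\emph{Case $\beta_{n}^{2}\gamma_{n}^{(\pm)}\le \alpha^{2n}/\sqrt5$.} The positive term $2(F_{2n+12}+16F_{2n+6})/(F_{2n+9}^2-4)$ is dropped, so the difference is at most $\beta_{n}^{-2}\gamma_{n}^{(\mp)}+\alpha^{-2n}/\sqrt5$. The assumption gives $\alpha^{-2n}/\sqrt5\le 1/(5\beta_{n}^{2}\gamma_{n}^{(\pm)})$. Dividing by $\beta_{n}^{2}\gamma_{n}^{(\pm)}$ then bounds the relative error by
\[
\frac{\gamma_{n}^{(\mp)}+1/(5\gamma_{n}^{(\pm)})}{\beta_{n}^{4}\gamma_{n}^{(\pm)}}.
\]
In the worst case the denominator involves $\gamma_{n}^{(-)}$. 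Lemma on $\gamma_n^{\pm}$ for the $D(64)$ case gives $\gamma_{n}^{(-)}>0.00694\alpha^{-2n-4}$, so the dominant piece is roughly $(5\cdot 0.00694^2)^{-1}\alpha^{4n+8}\approx 4150\,\alpha^{4n+8}$. We then use $\beta_{n}^{2}$ to absorb one factor $\alpha^{4n+8}$. Since $\beta_{n}>F_{2n+9}-1$, we have $\beta_{n}^{2}$ of order $\alpha^{4n+18}/5$, which is much larger than $\alpha^{4n+8}$. This leaves a constant times $\beta_{n}^{-2}$. The small remaining term $\gamma_n^{(+)}/(\beta_n^4\gamma_n^{(-)})$ is handled the same way.

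\emph{Case $\beta_{n}^{2}\gamma_{n}^{(\pm)}> \alpha^{2n}/\sqrt5$.} Here the difference is at most $2/F_{2n+6}+32/F_{2n+12}+\alpha^{-2n}/\sqrt5$. This is a constant times $\alpha^{-2n-6}$, and dividing by $\beta_{n}^{2}\gamma_{n}^{(\pm)}\ge \beta_{n}^{2}\cdot 0.00694\alpha^{-2n-4}$ gives a constant times $\beta_{n}^{-2}$, well within the target.

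The main obstacle is the constant bookkeeping in the first case, where the $\gamma_{n}^{(-)}$ denominator is tiny and squared. One has to verify that $\alpha^{4n+8}\beta_{n}^{-2}$ is bounded uniformly in $n$ (its worst value is at the smallest admissible $n$), so that the product stays below $135415$. A second check is that $135415\beta_{n}^{-2}<1/2$. Since $3\mid n$ and $n\ge 3$, we have $\beta_{n}\ge (F_{15}+\sqrt{F_{15}^2-4})/2\approx 610$, so $\beta_n^2\approx 3.7\cdot 10^{5}>270830$, which holds. If the generic estimates are too crude for $n=3$, that value is computed directly, as the paper did for $n=1$ in Lemma \ref{5.1}.
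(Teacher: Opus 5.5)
Your route is the paper's own: substitute the known solution $j=1$, $k=2n$, split on the sign of $\beta_n^{2}\gamma_n^{(\pm)}-\alpha^{2n}/\sqrt5$, bound the relative error by a constant times $\beta_n^{-2}$, and finish with \eqref{eq:e inequality}. Using $3\mid n$ to get $n\ge 3$ and $\beta_n^2\approx 3.72\cdot10^{5}>270830$ is a tidier replacement for the paper's separate $n=1$ check.

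\textbf{The gap: your first display holds only for the lower sign.} The paper's proof shares this flaw, and so does Lemma \ref{5.1}. The known solution is $C_1^{(-)}=F_{2n}$, whereas $F_{2n+14}<C_1^{(+)}<F_{2n+15}$. So with $\gamma_n^{(+)}$, the quantity $\beta_n^2\gamma_n^{(+)}-\alpha^{2n}/\sqrt5$ is not the small right-hand side you wrote, and both of your cases rest on that identity.

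In fact the statement is false for the upper sign. At $n=3$ one has $\beta_3^2\gamma_3^{(+)}\approx 9768$ and $\alpha^6/\sqrt5\approx 8.025$, so $\Lambda_0\approx\log 1217\approx 7.1$, while $270830\,\beta_3^{-2}\approx 0.73$. In general, $e^{\Lambda_0}\to(\alpha^{6}+4\alpha^{3})^2=(17+8\sqrt5)^2$ for the upper sign, and $e^{\Lambda_0}\to(\alpha^6-4\alpha^3)^2=1$ for the lower one. So your argument proves the bound for $\gamma_n^{(-)}$ only.

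This matters later. The identity $\Lambda_1=\Lambda-\Lambda_0-(2j-2)\log\delta_n$ requires $\Lambda$ and $\Lambda_0$ to carry the same $\gamma_n^{(\pm)}$. Hence the branch $C_j^{(+)}=F_k$ is not covered by this lemma and needs its own treatment.

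\textbf{A smaller slip: the exponent in your bound on $\gamma_n^{(-)}$.} You used $\gamma_n^{(-)}>0.00694\,\alpha^{-2n-4}$ from the printed $\gamma$-lemma, but that exponent is a typo. The lemma's own derivation gives $\gamma_n^{(\pm)}\approx\sqrt5\,(4\alpha^{-3}\pm1)^2\alpha^{-2n-6}$. The printed lower bound is false: at $n=3$, $\gamma_3^{(-)}\approx 2.16\cdot10^{-5}$ while $0.00694\,\alpha^{-10}\approx5.6\cdot10^{-5}$. The paper's proof of this lemma in fact uses $\alpha^{-2n-6}$. With the correct exponent your dominant term becomes about $4150\,\alpha^{4n+12}\beta_n^{-4}\approx 1160\,\beta_n^{-2}$. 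So for the lower sign the conclusion still holds with room to spare.
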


\proof
Assume that $n\geq 2$.  
We have
\[
\beta_{n}^{2}\gamma_{n}^{(\pm)}-\frac{\alpha^{2n}}{\sqrt{5}}=\frac{2(F_{2n+12}+16F_{2n+6})}{F_{2n+9}^{2}-4}-\beta_{n}^{-2}\gamma_{n}^{(\mp)}-\frac{\alpha^{-2n}}{\sqrt{5}}.
\]
If $\dfrac{1}{\beta_{n}^{2}\gamma_{n}^{(\pm)}}\geq\alpha^{-2n}\sqrt{5}$, then $\dfrac{1}{5\beta_{n}^{2}\gamma_{n}^{(\pm)}}\geq\dfrac{\alpha^{-2n}}{\sqrt{5}}$, and
\begin{gather*}
|\alpha^{2n}5^{-\frac{1}{2}}\beta_{n}^{-2}(\gamma_{n}^{(\pm)})^{-1}-1|=\frac{1}{\beta_{n}^{2}\gamma_{n}^{(\pm)}}\bigg| \frac{\alpha^{2n}}{\sqrt{5}}-\beta_{n}^{2}\gamma_{n}^{(\pm)}\bigg|\\
<\frac{\beta_{n}^{-2}\gamma_{n}^{(\mp)}+\dfrac{\alpha^{-2n}}{\sqrt{5}}}{\beta_{n}^{2}\gamma_{n}^{(\pm)}}<\frac{         \gamma_{n}^{(\mp)}+\dfrac{1}{5\gamma_{n}^{(\pm)}}     }                 {\beta_{n}^{4}\gamma_{n}^{(\pm)}                }\\
<\frac{8.46635831+\dfrac{\alpha^{4n+12}}{5(0.00694321)}}{0.00694321\beta_{n}^{4}}\\
<\frac{1219.4+4148.7\alpha^{4n+12}}{\beta_{n}^{4}}\\
\leq \frac{ \dfrac{1219.4}{\beta_{n}^{2}}+\dfrac{4148.7\alpha^{4n+12}}{\beta_{n}^{2}}}{\beta_{n}^{2}}<4149\beta_{n}^{-2}.
\end{gather*}
If $\beta_{n}^{2}\gamma_{n}^{(\pm)}>\dfrac{\alpha^{2n}}{\sqrt{5}}$, then
\begin{gather*}
|\alpha^{2n}5^{-\frac{1}{2}}\beta_{n}^{-2}(\gamma_{n}^{(\pm)})^{-1}-1|=\frac{1}{\beta_{n}^{2}\gamma_{n}^{(\pm)}}(\beta_{n}^{2}\gamma_{n}^{(\pm)}-\alpha^{2n}5^{-\frac{1}{2}})\\
<\frac{1}{\beta_{n}^{2}\gamma_{n}^{(\pm)}}\frac{2(F_{2n+12}+16F_{2n+6})}{F_{2n+9}^{2}-4}=\frac{\bigg( \dfrac{2}{F_{2n+6}}+\dfrac{32}{F_{2n+12}}\bigg)}{\beta_{n}^{2}\gamma_{n}^{(\pm)}}\\
<\frac{\bigg( \dfrac{2}{F_{2n+6}}+\dfrac{32}{F_{2n+12}}\bigg)}{\beta_{n}^{2}(0.00694321)\alpha^{-2n-6}}=\frac{\dfrac{2\alpha^{2n+6}}{F_{2n+6}}+\dfrac{32\alpha^{2n+6}}{F_{2n+12}}}{\beta_{n}^{2}(0.00694321)}\\
<\frac{2\alpha^{2}+\dfrac{32}{\alpha^{4}}}{\beta_{n}^{2}(0.00694321)}<\frac{9.91}{\beta_{n}^{2}(0.00694321)}<\frac{1428}{\beta_{n}^{2}}.
\end{gather*}
For $n=1$, $\Lambda_{0}\leq 2\log{\beta_{1}}-2\log{\alpha}+\log{(\sqrt{5}\gamma_{1}^{(+)})} <\dfrac{270830}{\beta_{n}^{2}}$.  In all cases we have 
\[
|\Lambda_{0}|<\dfrac{270830}{\beta_{n}^{2}}.
\]

\qed

\begin{lemma}
Let 
\[
\Lambda_{1}:=K\log{\alpha}-(j-1)\log{\bigg(\frac{5}{4}\bigg)}, \text{ where } K:=(2j-1)(2n+9)-k-9,  
\]
then
\[ 
|\Lambda_{1}|<\dfrac{10(j+5493)}{\alpha^{4n+14}}.
\]
\end{lemma}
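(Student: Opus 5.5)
The plan follows the proof of Lemma \ref{5.2}: I would write $\Lambda_1$ as $\Lambda-\Lambda_0$ minus a small error term, then use the triangle inequality together with the bounds already obtained for $\Lambda$ and $\Lambda_0$. I expect the bookkeeping in the first step (matching the coefficient of $j-1$ to $\log(5/4)$) to be the main obstacle, and I flag below a point where it may not close.

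First I expand $\log\beta_n$. In the $D(64)$ setting $\beta_n=\frac{F_{2n+9}+\sqrt{F_{2n+9}^2-4}}{2}$ satisfies $\beta_n+\beta_n^{-1}=F_{2n+9}$, so
\[
\beta_n=F_{2n+9}\Big(1-\frac{1}{\beta_nF_{2n+9}}\Big).
\]
Since $2n+9$ is odd, Binet's formula gives
\[
F_{2n+9}=\frac{\alpha^{2n+9}}{\sqrt5}\Big(1+\alpha^{-4n-18}\Big).
\]
Setting
\[
\delta_n:=\Big(1-\frac{1}{\beta_nF_{2n+9}}\Big)\Big(1+\alpha^{-4n-18}\Big),
\]
I get $\log\beta_n=(2n+9)\log\alpha+c+\log\delta_n$ with an explicit constant $c$. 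Subtracting $\Lambda_0$ from $\Lambda$ cancels the $\log(\sqrt5\gamma_n^{(\pm)})$ terms:
\[
\Lambda-\Lambda_0=(2j-2)\log\beta_n-(k-2n)\log\alpha .
\]
Now $(2j-2)(2n+9)-(k-2n)=K$ with $K=(2j-1)(2n+9)-k-9$, so
\[
\Lambda-\Lambda_0=K\log\alpha+(2j-2)c+(2j-2)\log\delta_n .
\]
The delicate point is that $(2j-2)c$ must equal exactly $-(j-1)\log(5/4)$, which needs $2c=-\log(5/4)$. With the normalization of $\beta_n$ written above I get $c=-\tfrac12\log5$, so $2c=-\log 5$, not $-\log(5/4)$. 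Before estimating anything I would recheck the normalization of $\beta_n$ and $K$ (and whether the paper uses $2\beta_n$ or $F_{2n+9}$-type scalings). The aim is for the constant to be exactly $-\log(5/4)$, so that $\Lambda_1=\Lambda-\Lambda_0-(2j-2)\log\delta_n$ holds identically. If the constant cannot be made to match, the proof fails at this step and the statement would need $\log 5$ in place of $\log(5/4)$; this is a possible mismatch in the statement as given.

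Second, I bound $|\log\delta_n|$ using $|\log(1\pm x)|<2x$ for small $x$ together with $\beta_nF_{2n+9}>F_{2n+9}^2-1\gg\alpha^{4n+16}/5$. This gives
\[
|\log\delta_n|<C\,\alpha^{-4n-18}
\]
for a modest constant $C$ (of order $6$–$7$). The error term $(2j-2)|\log\delta_n|$ is then at most a small constant times $(j-1)\alpha^{-4n-14}$, comfortably inside $10j\,\alpha^{-4n-14}$.

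Third, I bound $|\Lambda|<3868\beta_n^{-2j}\le 3868\beta_n^{-4}$ and $|\Lambda_0|<270830\beta_n^{-2}$. Then I need a lower bound of the form $\beta_n^2>c'\alpha^{4n+14}$, obtained from $\beta_n>F_{2n+9}-1$ and $F_{2n+9}\ge\alpha^{2n+7}$, so that both terms collapse into $54930\,\alpha^{-4n-14}$ as in the stated bound. I expect this constant chase to be tight. If the crude lower bound on $\beta_n^2$ is insufficient, I would treat small $n$ numerically, exactly as the $n=1$ case was handled in the bound for $\Lambda_0$.
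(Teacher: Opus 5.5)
Your decomposition $\Lambda_1=\Lambda-\Lambda_0-(2j-2)\log\delta_n$ is the paper's, and your worry in the first step is justified: the mismatch is in the paper, not in your bookkeeping. The paper's proof starts from $\beta_n=F_{2n+9}+\sqrt{F_{2n+9}^2-4}=2F_{2n+9}\bigl(1-\cdots\bigr)$. That is \emph{twice} the unit $\beta_n=\frac{F_{2n+9}+\sqrt{F_{2n+6}F_{2n+12}}}{2}$, which is the one in $C_j^{(\pm)}=\beta_n^{2j}\gamma_n^{(\pm)}-\cdots$, hence in $\Lambda$, in $\Lambda_0$, and in the bounds $0<\Lambda<3868\beta_n^{-2j}$ and $|\Lambda_0|<270830\beta_n^{-2}$. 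The step mimics the $D(9)$ case, where $F_{2n+6}+\sqrt{F_{2n+6}^2-1}\approx 2F_{2n+6}$ really is the unit. That factor $2$ is the only source of $\log(2/\sqrt5)$, and hence of $\log(5/4)$. With the correct $\beta_n$ one gets, as you found, $\Lambda-\Lambda_0=K\log\alpha-(j-1)\log5+(2j-2)\log\delta_n$, so
\[
\Lambda_1=(j-1)\log 4+\bigl(\Lambda-\Lambda_0-(2j-2)\log\delta_n\bigr).
\]
The bracket is exactly what the triangle-inequality argument shows to be small. But $(j-1)\log4\ge\log4$, while the claimed bound is below $0.21+4\cdot10^{-5}j$ once $n\ge3$ (as $3\mid n$). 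So the stated inequality would contradict the very estimates used to prove it. No renormalization will make your constant match. The honest conclusion is the $\log5$ version of the lemma, and the later Laurent and continued-fraction steps must then be redone with $\log5/\log\alpha$.

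Your third step also fails as written, and more seriously than you expect; the same doubling hides it in the paper. The paper's bound $\beta_n^2>\frac34\alpha^{4n+18}>5\alpha^{4n+14}$ holds only for $2\beta_n$. For the unit, $\beta_n^2\approx\alpha^{4n+18}/5\approx1.37\,\alpha^{4n+14}$, so $270830\beta_n^{-2}$ is about $2\cdot10^5\alpha^{-4n-14}$ for every $n$. The chase is therefore not tight but off by a factor of roughly $3.6$ uniformly in $n$, and checking small $n$ numerically cannot recover $10\cdot5493$. To get a constant of that size, go back into the proof of the $\Lambda_0$ bound and use its estimate for $n\ge2$: about $8300\beta_n^{-2}$ after passing from $|e^x-1|$ to $|x|$. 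That estimate is available here, since $3\mid n$ gives $n\ge3$. Note too that this estimate, and so the whole decomposition, only applies to the sign $-$. The pair $(j,k)=(1,2n)$ solves $F_k=C_1^{(-)}$ but not $F_k=C_1^{(+)}$, and for the sign $+$ one has $\Lambda_0\approx\log\bigl(\gamma_n^{(+)}/\gamma_n^{(-)}\bigr)\approx7$.
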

\proof
We know 
\begin{gather*}
\beta_{n}=F_{2n+9}+\sqrt{F_{2n+9}^{2}-4}=2F_{2n+9}-\frac{4}{F_{2n+9}+\sqrt{F_{2n+9}^{2}-4}}\\
=2F_{2n+9}\bigg(1-\frac{4}{2F_{2n+9}(F_{2n+9}+\sqrt{F_{2n+9}^{2}-4})}\bigg)
\end{gather*}
and 
\[
2F_{2n+9}=\frac{2}{\sqrt{5}}(\alpha^{2n+9}-\overline{\alpha}^{2n+9})=\frac{2}{\sqrt{5}}\alpha^{2n+9}\bigg(1+\frac{1}{\alpha^{4n+18}}\bigg).
\]
Define 
\[
\delta_{n}:=\bigg(1-\frac{4}{2F_{2n+9}(F_{2n+9}+\sqrt{F_{2n+9}^{2}-4})}\bigg)\bigg(1+\frac{1}{\alpha^{4n+18}}\bigg).
\]
Then $\log{(\beta_{n})}=\log{\bigg(\dfrac{2}{\sqrt{5}}\bigg)}+(2n+9)\log{\alpha}+\log{\delta_{n}}$.
\begin{align*}
\Lambda-\Lambda_{0}=&(2j-2)\log{(\beta_{n})}-(k-2n)\log{(\alpha)}\\
=&(2j-2)\log{\bigg(\frac{2}{\sqrt{5}}\bigg)}+(2j-2)(2n+9)\log{(\alpha)}\\
&+(2j-2)\log{(\delta_{n})}-(k-2n)\log{(\alpha)}\\
=&(2j-2)\log{(\delta_{n})}+K\log{(\alpha)}-(j-1)\log{\bigg(\frac{5}{4}\bigg)}.
\end{align*}
So $\Lambda_{1}=\Lambda-\Lambda_{0}-(2j-2)\log{(\delta_{n})}$.  By the triangle inequality,
\[
|\log{(\delta_{n})}|\leq \bigg| \log{\bigg( 1-\frac{4}{2F_{2n+9}(F_{2n+9}+\sqrt{F_{2n+9}^{2}-4})}\bigg)} \bigg|+\bigg| \log{ \bigg(1+\frac{1}{\alpha^{4n+18}}\bigg)}\bigg|,
\]
so
\[
|\log{(\delta_{n})}|<\frac{4}{F_{2n+9}(F_{2n+9}+\sqrt{F_{2n+9}^{2}-4})}+\frac{1}{\alpha^{4n+18}}
\]
\[
<\frac{4}{\alpha^{4n+14}}+\frac{1}{\alpha^{4n+18}}<\frac{5}{\alpha^{4n+14}},
\]
and
\[
|\Lambda_{1}|\leq |\Lambda|+|\Lambda_{0}|+|2j-2||\log{\delta_{n}}|<\frac{3868}{\beta_{n}^{2}}+\frac{270830}{\beta_{n}^{2}}+\frac{10(j-1)}{\alpha^{4n+14}}.
\]
Clearly 
\[
\beta_{n}=F_{2n+9}\bigg(1+\sqrt{1-\frac{4}{F_{2n+9}^{2}}}\bigg)\geq F_{2n+9}\bigg(1+\sqrt{\frac{89^{2}-4}{89^{2}}}\bigg)>\frac{\alpha^{2n+9}}{\sqrt{5}}\bigg(1+\sqrt{\frac{7917}{89^{2}}}\bigg).
\]
Thus
 \[
 \beta_{n}^{2}>\alpha^{4n+18}\bigg(1+\frac{\sqrt{7917}}{89}\bigg)^{2}>\frac{3}{4}\alpha^{4n+18}>5\alpha^{4n+14},
\]
and we have $|\Lambda_{1}|<\dfrac{274698}{\beta_{n}^{2}}+\dfrac{10(j-1)}{\alpha^{4n+14}}<\dfrac{10(j+5493)}{\alpha^{4n+14}}$.
\qed
\vspace{5mm}


\section{Refining Our Bounds (2)}\label{RefiningD(64)}

\begin{lemma}
If \eqref{eq:Fk64} has a positive integer solution $(j,k)$ with $j>1$ then
\[
j<6.25\times10^{21}\text{ and } n<37024766.
\]
\end{lemma}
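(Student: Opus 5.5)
The plan copies the proof of Lemma \ref{5.3}: apply Lemma \ref{Laurent} to the two-logarithm form $\Lambda_{1}=K\log{\alpha}-(j-1)\log{(5/4)}$ from the previous lemma, and combine the result with Proposition \ref{prp:64}. I take
\[
D=2,\quad \gamma_{1}=\frac{5}{4},\quad \gamma_{2}=\alpha,\quad b_{1}=j-1,\quad b_{2}=K,\quad h_{1}=\log{5},\quad h_{2}=\frac{1}{2}.
\]
These heights are admissible: $h(5/4)=\log{5}$, and $\tfrac12\geq\max\{h(\alpha),\log{\alpha}/2,1/2\}$.

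\emph{Step 1: bound $K$ linearly in $j$.} From $|\Lambda_{1}|<10(j+5493)\alpha^{-4n-14}$,
\[
K<\frac{(j-1)\log{(5/4)}+10(j+5493)\alpha^{-4n-14}}{\log{\alpha}}.
\]
With $n\geq 1$ (indeed $n\geq 3$ since $3\mid n$), $\alpha^{-18}$ is tiny, so this gives $K<0.47j+c$ for a modest constant $c$. Hence $b'=(j-1)+K/(2\log{5})<1.15j+c'$.

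\emph{Step 2: combine with the upper bound.} Lemma \ref{Laurent} gives
\[
\log{|\Lambda_{1}|}>-17.9\cdot 16\cdot\log{5}\cdot\tfrac12\,\big(\max\{\log{b'}+0.38,15\}\big)^{2}.
\]
Comparing this with $\log{|\Lambda_{1}|}<-(4n+14)\log{\alpha}+\log{(10(j+5493))}$ yields
\[
n<c_{1}\big(\max\{\log{(1.15j+c')}+0.38,15\}\big)^{2}+c_{2}\log{(10(j+5493))}.
\]
Here $c_{1}=17.9\cdot 8\log{5}/(4\log{\alpha})\approx 119.8$. If the maximum is $15$, then $j$ is bounded by roughly $e^{14.62}/1.15\approx 2\times 10^{6}$, and $n$ is correspondingly bounded by roughly $2.7\times 10^{4}$, well inside the claimed bounds.

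\emph{Step 3: close the loop.} Otherwise, I substitute the bound on $n$, which is of order $(\log j)^{2}$, into Proposition \ref{prp:64}:
\[
j<1.16\cdot 10^{10}(2n+9)\log{(156j(n+5))}.
\]
The right side is then $O(10^{10}(\log j)^{3})$, so the inequality fails for large $j$. Solving it numerically should give $j<6.25\times 10^{21}$, and feeding this back into Step 2 gives $n<37024766$.

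The main obstacle is the numerics of Step 3. One has to check that the fixed point of $j<1.16\cdot 10^{10}(2n(j)+9)\log{(156j(n(j)+5))}$ lies below $6.25\times 10^{21}$, and to track the constants ($c_{1}$, $c'$, $0.38$) carefully enough that the stated bound on $n$ results. I would do this by evaluating the right-hand side at $j=6.25\times 10^{21}$, verifying that it is smaller, and noting that the ratio RHS$/j$ is decreasing in $j$.
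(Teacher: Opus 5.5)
Your proposal is correct and is essentially the paper's proof. Both apply Laurent's lemma to $\Lambda_{1}$ with $D=2$, $h_{1}=\log 5$, $h_{2}=\tfrac12$, use the linear bound $K<0.47j+19.3$ giving $b'=1.15j+5$, split on whether the maximum is $15$, and finish by combining with Proposition \ref{prp:64}. Your plan to certify Step 3 by evaluating the right-hand side at $j=6.25\times10^{21}$ and using that the ratio to $j$ is decreasing is a sound way to make it rigorous, and it succeeds with a wide margin, since the stated bounds are far from tight (feeding $j<6.25\times10^{21}$ back into Step 2 already gives $n$ of order $3\times10^{5}$, well below $37024766$).
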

\proof
Apply Lemma \ref{Matveev}. Set
\begin{gather*}
D=2,\quad \gamma_{1}=\frac{5}{4},\quad \gamma_{2}=\alpha,\quad b_{1}=(j-1), \quad b_{2}=K.
\end{gather*}
  Take $h_{1}=\log{(5)}$, and $h_{2}=\dfrac{1}{2}$.  We have $K\log{(\alpha)}-(j-1)\log{(\dfrac{5}{4})}<\dfrac{10(j+5493)}{\alpha^{4n+14}}$, 
  so 
\[
K<\frac{(j-1)\log{\bigg(\dfrac{5}{4}\bigg)}+\dfrac{10(j+5493)}{\alpha^{4n+14}}}{\log{(\alpha)}}<19.3+0.47j,
\]
so take
\[
\frac{|b_{1}|}{Dh_{2}}+\frac{|b_{2}|}{Dh_{1}}=(j-1)+\frac{K}{2\log(5)}\leq 4.996+1.15j.
\]
Let $b'=5+1.15j$, hence
\[
\log{|\Lambda_{1}|}>(-17.9)\cdot16\cdot(\max\{\log{(5+1.15j)}+0.38,15\})^{2}\cdot \log{(5)}\cdot \frac{1}{2}.
\]
 The previous Lemma gives: $\log{|\Lambda_{1}|}<-(4n+14)\log{(\alpha)}+\log{(10(j+5493))}$, so we have 
\[
(-17.9)\cdot16\cdot(\max\{\log{(5+1.15j)}+0.38,15\})^{2}\cdot \log{(5)}\cdot \frac{1}{2}<\log{|\Lambda_{1}|}
\]
\[
<-(4n+14)\log{(\alpha)}+\log{(10(j+5493))}.
\]
Rearranging this,
\[ 
n<120(\max\{{log{(5+1.15j)}}+0.38,15\})^{2}+0.52\cdot \log{(10(j+5493))}.
\]
  If $\log{(1.15j+5)}+0.38<15$, then $j<1943957$ and $n<27009$.  Otherwise 
  \[
  n<120(\log{(5+1.15j)}+0.38)^{2}+0.52\log{(10(j+5493))}.
  \]
    Combining this with $j<1.16\times10^{10}(2n+9)\log{(156j(n+5))}$ gives $j<6.25\times10^{21}$ and $n<37024766$.
\qed
\vspace{5mm}
\begin{proposition}
If \eqref{eq:Fk64} has a positive integer solution $(j,k)$, with $j>1$, then 
\[
n<1.04\cdot \log{j}+4.6.
\]
\end{proposition}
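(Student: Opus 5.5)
We argue exactly as in Section \ref{RefiningD(9)}, starting from the two-logarithm estimate of the previous lemma,
\[
\Big|K\log{\alpha}-(j-1)\log{(5/4)}\Big|<\frac{10(j+5493)}{\alpha^{4n+14}},
\]
and dividing by $(j-1)\log{\alpha}$:
\[
\Big|\frac{\log{(5/4)}}{\log{\alpha}}-\frac{K}{j-1}\Big|<\frac{10(j+5493)}{(j-1)\alpha^{4n+14}\log{\alpha}}.
\]
We then split into two cases, according to whether the right-hand side is less than $\frac{1}{2(j-1)^{2}}$.

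Suppose first that the right-hand side is less than $\frac{1}{2(j-1)^{2}}$. Then Theorem \ref{Legendre} shows that $K/(j-1)$ is a convergent $p_{r}/q_{r}$ of $\log{(5/4)}/\log{\alpha}$, with $q_{r}\le j-1<6.25\times 10^{21}$ by the preceding lemma. Lemma \ref{ParQuo} then gives the lower bound
\[
\Big|\frac{\log{(5/4)}}{\log{\alpha}}-\frac{K}{j-1}\Big|>\frac{1}{(a_{r+1}+2)(j-1)^{2}}.
\]
Let $r_{0}$ be the index of the last convergent whose denominator is below $6.25\times10^{21}$. We compute $a^{*}=\max\{a_{r+1}: r\le r_{0}\}$ from the continued fraction expansion. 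The expansion is the same as in Section \ref{RefiningD(9)}; one needs to go a few terms further, up to roughly the 48th or 49th convergent. Provided $a^{*}$ stays below about $49$ or $50$, as before, we obtain
\[
\alpha^{4n+14}<(a^{*}+2)\cdot 10(j-1)(j+5493)(\log{\alpha})^{-1}.
\]

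Suppose instead that the right-hand side is at least $\frac{1}{2(j-1)^{2}}$. Then directly
\[
\alpha^{4n+14}\le 20(j-1)(j+5493)(\log{\alpha})^{-1},
\]
which is weaker than the bound from the first case.

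In both cases we use $j\ge 2$ to bound $(j-1)(j+5493)<2747j^{2}$. This gives $\alpha^{4n+14}<Cj^{2}$, with $C$ of size roughly $51\cdot 10\cdot 2747/\log{\alpha}\approx 2.9\times 10^{6}$. Taking logarithms,
\[
n<\frac{2}{4\log{\alpha}}\log{j}+\frac{\log{C}-14\log{\alpha}}{4\log{\alpha}}\approx 1.04\log{j}+\frac{14.9-6.74}{1.925},
\]
and the last constant is about $4.24<4.6$. This leaves a little room for $a^{*}$ being somewhat larger than $49$.

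The main obstacle is the computational step. We must actually determine the partial quotients of $\log{(5/4)}/\log{\alpha}$ up to denominators of size $6.25\times 10^{21}$, and confirm that the maximal $a_{r+1}$ keeps the constant below $4.6$. If some partial quotient in the newly needed range turns out to be large, we would instead treat that single convergent separately: either the denominator exceeds the relevant range for $j$, or we check that particular $K/(j-1)$ directly.
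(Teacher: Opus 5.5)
Your proposal is correct and follows the paper's proof essentially step for step: Theorem~\ref{Legendre} plus Lemma~\ref{ParQuo} when the error term is below $\frac{1}{2(j-1)^{2}}$, the trivial inequality otherwise, and then taking logarithms of $\alpha^{4n+14}<Cj^{2}$. The computation you leave open is exactly what the paper supplies by asserting that the largest relevant partial quotient of $\log(5/4)/\log\alpha$ is $a_{36}=49$, which gives $\alpha^{4n+14}<510(j-1)(j+5493)/\log\alpha<5823640j^{2}$; your sharper estimate $(j-1)(j+5493)<2747j^{2}$ only lowers the additive constant from about $4.59$ to about $4.23$.
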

\proof
We have that $|K\log{(\alpha)}-(j-1)\log{\bigg(\dfrac{5}{4}\bigg)|}<10(j+5493)\alpha^{-(4n+14)}$,  
hence
\[
 \bigg|\frac{\log{(5/4)}}{\log{(\alpha)}}-\frac{K}{(j-1)}\bigg|<\frac{10(j+5493)}{(j-1)\alpha^{4n+14}\log{(\alpha)}}.
\]
First, assume 
\begin{equation}\label{eq:last64}
\frac{10(j+5493)}{(j-1)\alpha^{4n+14}\log{(\alpha)}}<\frac{1}{2(j-1)^{2}}.
\end{equation}
  Then 
  \[
  \bigg|\frac{\log{(5/4)}}{\log{(\alpha)}}-\frac{K}{(j-1)}\bigg|<\frac{1}{2(j-1)^{2}}.
  \]
    The denominator of the $49$th convergent of $\dfrac{\log{(5/4)}}{\log{(\alpha)}}$ is greater than $6.25\times10^{21}$, our upper bound of $j$.  The $48$th convergent gives the lower bound:
\[
\bigg|\frac{\log{(5/4})}{\log{(\alpha)}}-\frac{K}{j-1}\bigg|>4\times10^{-44}.
\]
Combining the bounds
\[
4\times10^{-44}<\frac{10(j+5493)}{(j-1)\alpha^{4n+14}\log{(\alpha)}}<5513\cdot \alpha^{-4n-14}(\log{(\alpha))}^{-1}. 
\]
 gives us $n<55$.

Since we know that $\dfrac{p_{r}}{q_{r}}$ is the $r$th convergent, $\bigg|\dfrac{\log{(5/4)}}{\log{(\alpha)}}-\dfrac{p_{r}}{q_{r}}\bigg|>\dfrac{1}{(a_{r+1}+2)q_{r}^{2}}$, and since $\max{\{a_{r+1}:2\leq r\leq 47\}}=a_{36}=49$, 
\[
\frac{1}{51(j-1)^{2}}<\frac{10(j+5493)}{(j-1)\alpha^{4n+14}\log{(\alpha)}}\implies \alpha^{4n+14}<\frac{510(j-1)(j+5493)}{\log{(\alpha)}}.
\]
If \eqref{eq:last64} does not hold,

\[
\frac{1}{2(j-1)^{2}} \leq \frac{10(j+5493)}{(j-1)\alpha^{4n+14}\log{(\alpha)}}\implies \alpha^{4n+14}\leq \frac{20(j-1)(j+5493)}{\log{(\alpha)}}.
\]
In both cases
\[
\alpha^{4n+14}<\frac{510(j-1)(j+5493)}{\log{(\alpha)}}<1060j(j+5493)<5823640j^{2},
\]
so $n<1.04\cdot \log{(j)}+4.6$.
\qed

\vspace{5mm}
Combining the above proposition with \ref{prp:64} yields:
\vspace{5mm}
\begin{lemma}
If \eqref{eq:Fk64} has a positive integer solution $(j,k)$ with $j>1$, then
\[
 j<4.88\times 10^{15} \text{ and } n<43.
 \]
 \end{lemma}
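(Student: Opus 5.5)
The lemma should follow by substituting the bound $n<1.04\log j+4.6$ from the preceding proposition into the bound of Proposition \ref{prp:64}, $j<1.16\cdot 10^{10}(2n+9)\log(156j(n+5))$. This leaves an inequality in $j$ alone, which can be solved numerically. The argument mirrors the corresponding step in the $D(9)$ case.

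The substitution gives $2n+9<2.08\log j+18.2$ and $n+5<1.04\log j+9.6$. Hence
\[
j<1.16\cdot 10^{10}\,(2.08\log j+18.2)\,\log\bigl(156j(1.04\log j+9.6)\bigr).
\]
The right-hand side grows like $(\log j)^{2}$, so the inequality fails for all sufficiently large $j$. The plan is to find the threshold explicitly by checking monotonicity.

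First I would verify the sign flip at the endpoint. Take $j_{0}=4.88\times 10^{15}$, so that $\log j_{0}\approx 36.12$. Then $2.08\log j_{0}+18.2\approx 93.3$ and $1.04\log j_{0}+9.6\approx 47.2$. The logarithm becomes $\log(156\cdot 47.2\cdot j_{0})\approx 36.12+8.90\approx 45.0$. The right-hand side is therefore about $1.16\cdot 10^{10}\cdot 93.3\cdot 45.0\approx 4.87\times 10^{13}$, which is well below $j_{0}$.

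Next I would check that $j$ divided by the right-hand side is increasing for $j\ge j_{0}$. Its logarithmic derivative is $1/j$ minus terms of size $O(1/(j\log j))$, so it is positive. This shows that every solution must satisfy $j<4.88\times 10^{15}$; the actual crossover occurs somewhat below $j_{0}$, and the stated bound simply records a safe value.

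Feeding this back into the preceding proposition gives $n<1.04\log(4.88\times 10^{15})+4.6\approx 1.04\cdot 36.12+4.6\approx 42.2$, so $n<43$. Earlier lemmas already give $j\le 6.25\times 10^{21}$, which keeps the domain of the substitution consistent.

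The main obstacle is purely numerical: controlling the constants carefully enough that the crossover lands below the stated value, and confirming monotonicity rigorously rather than by a single evaluation. I would handle this by bounding each logarithmic factor from above at $j_{0}$ with explicit decimals, as done above.
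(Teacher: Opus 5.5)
Your proposal is correct and is exactly the paper's argument: substitute $n<1.04\log j+4.6$ into Proposition \ref{prp:64}, solve the resulting one-variable inequality numerically to get $j<4.88\times 10^{15}$, and feed this back into $n<1.04\log j+4.6$ to get $n\le 42$.

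One caution on your numerics concerns the hundredfold slack you found (right-hand side $\approx 4.87\times10^{13}$ at $j_{0}$). It comes from the printed constant $1.16\cdot 10^{10}$, whereas Lemma \ref{Matveev} with $D^{2}=16$ included actually gives about $1.15\cdot 10^{12}$, as in the $D(9)$ case. With $1.16\cdot 10^{12}$ the right-hand side at $j_{0}$ is about $4.875\times 10^{15}$, so the stated bound is precisely that crossover, and your explicit-decimal check still succeeds but with only about $0.1\%$ margin.
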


\section{Baker-Davenport Reduction}\label{D(64)Reduction}
Using the same method as before, we have obtained bounds on $j$ and $n$.  All that remains to prove Theorem 1.2 is to apply Baker-Davenport reduction to these bounds.  

We have
\[
0<2j\log{\beta_{n}}-k\log{\alpha}+\log{(\sqrt{5}\cdot \gamma_{n}^{(\pm)})}<3868\beta_{n}^{-2j}.
\]
To apply Baker-Davenport reduction, consider
\begin{gather*}
\kappa=\frac{2\log{\beta_{n}}}{\log{\alpha}}, \quad \mu=\frac{\log{(\sqrt{5}\cdot \gamma_{n}^{(\pm)})}}{\log{\alpha}}, \quad A=\frac{3868}{\log{\alpha}}, \quad M=4.88\times 10^{15}
\end{gather*}
We again use procedures written in $\text{Maple}$  to find $j\leq 5$, which in turn gives us $1\leq n \leq 5$.
\begin{lemma}
If \eqref{eq:Fk64} has a positive integer solution $(j,k)$ with $j>1$, then
\[
 j\leq 5 \text{ and } n\leq 5.
 \]
\end{lemma}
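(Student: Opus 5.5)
The plan is to run the Baker--Davenport reduction (the Dujella--Peth\H{o} variant stated in Section~\ref{Preliminaries}) once for each $n$ with $1\le n\le 42$ and each choice of sign $\pm$. The input is the inequality
\[
0<2j\log\beta_n-k\log\alpha+\log(\sqrt5\gamma_n^{(\pm)})<3868\beta_n^{-2j}.
\]
Divide it by $\log\alpha$. This puts it in the form $0<j\kappa-k+\mu<AB^{-j}$ with
\[
\kappa=2\log\beta_n/\log\alpha,\quad \mu=\log(\sqrt5\gamma_n^{(\pm)})/\log\alpha,\quad A=3868/\log\alpha,\quad B=\beta_n^{2},\quad M=4.88\times10^{15},
\]
where $M$ is the bound on $j$ from the preceding lemma. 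That lemma also gives $n<43$, so only finitely many pairs $(n,\pm)$ occur.

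First, $\kappa$ is irrational. If $\kappa=p/q$ were rational, then $\beta_n^{2q}=\alpha^{p}$, which contradicts the multiplicative independence of $\beta_n$ and $\alpha$ already established from Lemma~\ref{second nosquare}. So $\kappa$ has an infinite continued fraction. For each $(n,\pm)$, compute $\kappa$ and $\mu$ to high precision, well beyond $10^{-40}$. Then find the first convergent $P/Q$ of $\kappa$ with $Q>6M\approx 2.93\times10^{16}$, and evaluate $\eta=\|\mu Q\|-M\|\kappa Q\|$. When $\eta>0$, the lemma rules out every solution with $\log(AQ/\eta)/\log B\le j\le M$. Since $B=\beta_n^2>\frac34\alpha^{4n+18}$ is already large for $n=1$ and $\log(AQ/\eta)$ is only about $\log(10^{20})$, this cutoff should come out at a very small value, at most around $5$. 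This is the source of the bound $j\le 5$.

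Second, convert this into a bound on $n$. Feed $j\le 5$ back into the refinement $n<1.04\log j+4.6$ from the Proposition of Section~\ref{RefiningD(64)}. This gives $n<1.04\log 5+4.6<6.28$, so $n\le 6$. To reach $n\le 5$, I would check the remaining case $n=6$ with $2\le j\le5$ directly, by computing $C_j^{(\pm)}$ and observing that it falls strictly between consecutive Fibonacci numbers. Alternatively, I would sharpen the constant in the refinement slightly. Either way, $j\le 5$ and $n\le5$ follow, with $j>1$ assumed throughout.

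The main obstacle is that $\eta$ can fail to be positive for some $(n,\pm)$. This happens, for instance, if $\mu$ is close to an integer combination of $1$ and $\kappa$. In that case, my first step is to pass to the next convergent, or the one after, of $\kappa$, which enlarges $Q$ and usually makes $\eta>0$ at the cost of only a slightly larger cutoff for $j$. If this still fails, I would instead check, for that $n$, whether $\mu$ is an exact rational combination coming from a genuine identity. That would only happen at $j=1$, where $C_1^{(-)}=F_{2n}$, and this case is excluded by $j\ge2$. A second, less serious point is numerical: the precision must be high enough that $\|\mu Q\|$ and $\|\kappa Q\|$ are computed reliably when $Q\sim10^{17}$. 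I would use at least 60 significant digits, and verify a couple of cases at higher precision.
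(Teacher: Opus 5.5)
Your main line is the paper's: the Dujella--Peth\H{o} reduction with these $\kappa,\mu,A$, $B=\beta_n^2$, $M=4.88\times10^{15}$ for each $n\le 42$ and each sign, followed by converting $j\le5$ into $n\le5$. On that last step you are right that $n<1.04\log j+4.6$ only gives $n\le 6$. The paper's ``which in turn gives $n\le5$'' really rests on the sharper $\alpha^{4n+14}<1060j(j+5493)$, which at $j=5$ gives $4n+14<35.8$; your direct check of $n=6$ works equally well.

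The gap is in your treatment of $\eta\le0$. For the $(-)$ sign this is not an occasional accident: it is forced by the known solution $(j,k)=(1,2n)$. Write $\mu^{(-)}$ for the choice with $\gamma_n^{(-)}$. The definitions give $\kappa+\mu^{(-)}-2n=\Lambda_0/\log\alpha$ with $|\Lambda_0|<270830\,\beta_n^{-2}$. So for every convergent $P/Q$ of $\kappa$ you have $\|\mu^{(-)}Q\|\le\|\kappa Q\|+Q|\Lambda_0|/\log\alpha$, while Lemma~\ref{ParQuo} gives $\|\kappa Q\|>1/((a_{r+1}+2)Q)$. Hence
\[
\eta<\frac{Q|\Lambda_0|}{\log\alpha}-\frac{M-1}{(a_{r+1}+2)Q}<0\qquad\text{whenever}\qquad Q^{2}<\frac{(M-1)\,\beta_n^{2}\log\alpha}{270830\,(a_{r+1}+2)}.
\]
Since $\beta_n^2$ grows like $\alpha^{4n+18}$, this typically holds already for the first convergent past $6M$ once $n$ reaches the mid-twenties. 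For $n$ near $42$, where $\beta_n^2\approx10^{38}$, it holds for essentially every convergent with $Q$ up to about $10^{23}$. Moving ``to the next convergent, or the one after'' therefore changes nothing.

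Your fallback does not close this either. $\mu^{(-)}$ is not an exact integer combination of $1$ and $\kappa$ (since $\Lambda_0\neq0$). The fact that the near-relation stems from the excluded pair $(1,2n)$ gives no bound for $j\ge2$, because the hypothesis $\eta>0$ of the reduction lemma does not involve $j$ at all. So as written, the $(-)$ sign stays open for these $n$. The paper's text is also silent here and merely reports the Maple output, but your plan claims to handle the issue and the argument offered fails. There are two standard repairs.

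(a) The lemma only needs $Q>6M$, not the first such convergent. Take one with $Q^{2}|\Lambda_0|$ much larger than $M$; for $n$ near $42$ this means $Q$ of order $10^{27}$. Compute $\kappa,\mu$ to somewhat more than $2\log_{10}Q$ digits. Since $\log B=2\log\beta_n$ is then comparable to $\log Q$, the cutoff $\log(AQ/\eta)/\log B$ stays close to $1$.

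(b) Subtract the known solution. For the $(-)$ sign,
\[
\Lambda-\Lambda_0=2(j-1)\log\beta_n-(k-2n)\log\alpha
\]
is homogeneous, so $|(j-1)\kappa-(k-2n)|\le(|\Lambda|+|\Lambda_0|)/\log\alpha<c\,\beta_n^{-2}$, with $c$ obtained from the numerical value of $\Lambda_0$. Once $\beta_n^{2}>2cM$, Theorem~\ref{Legendre} makes $(k-2n)/(j-1)$ a convergent $p_r/q_r$ of $\kappa$ with $q_r<M$. Lemma~\ref{ParQuo} then forces $a_{r+1}+2>\beta_n^{2}/(cM)$, which a finite computation of the partial quotients of $\kappa$ excludes. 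This is the same mechanism as the paper's $\Lambda_1$ argument, and it is most effective exactly for the large $n$ where the plain reduction breaks down.
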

All possible $j$ between 2 and 5 and $n$ from 1 to 5 can be very quickly checked in \eqref{eq:Cj64} to see that no solution exists.  It was already seen that $F_{14}<C_{1}^{(+)}<F_{15}$, so the only possible solution to \eqref{eq:Cj64} is $C_{1}^{(-)}=F_{2n}$.  When $n=1$, $F_{2n}=F_{2}=1=F_{1}$, so there is an additional solution in this case again.

\section*{acknowledgements}
The authors would like to thank Dr. Claude Levesque. His advice on this paper was invaluable. The authors thank the referee for a careful reading of the manuscript and for comments which improved this paper. The third author was supported in part by NSERC.

\end{document}